\title{Towards a Taub-Bolt to Taub-NUT via Ricci flow with surgery}
\author{John Hughes}
\newtheorem{condition}{Condition}[section]
\newtheorem{lemma}{Lemma}[section]
\newtheorem{theorem}{Theorem}[section]
\newtheorem{corollary}{Corollary}[section]
\newtheorem{proposition}{Proposition}[section]
\theoremstyle{definition}
\newtheorem{definition}[theorem]{Definition}
\newtheorem{remark}[theorem]{Remark}
\newtheorem*{question*}{Question}
\date{}
\address{Mathematical Institute, University of Oxford, Oxford OX2 6GG, United Kingdom}
\email{john.hughes@maths.ox.ac.uk}
\begin{document}

	\maketitle 
	\begin{abstract}
		This paper shows for the first time the existence of a Ricci flow with surgery  with local topology change $\mathbb{CP}^2\setminus\{ \mathrm{pt}\} \rightarrow \mathbb{R}^4$. The post surgery flow converges to the Taub-NUT metric on $\mathbb{R}^4$ in infinite time.
	\end{abstract}
	\tableofcontents

	\section{Introduction}
	If $(M,g_{0})$ is a Riemannian manifold, the Ricci flow is an equation for the evolution of $g_{0}$:
	\begin{equation}\label{RFequation}
		\partial_{t}g(t)=-2Ric(g(t)), \text{ } g(0)=g_{0}.
	\end{equation}

	In harmonic coordinates the Ricci tensor for a metric $g$ is
	
	$$R_{ij}=-\frac{1}{2}\Delta_{g} g_{ij}+ \text{ lower order terms.}$$
	
	This suggests that the Ricci flow is a heat equation for a metric on a manifold and so one might hope that, under the flow, the metric would regularise over time and converge to a canonical metric on $M$.  The Ricci flow equation however is non-linear,
	due to both the Laplacian depending on $g$, and the lower order terms, which can
	cause finite time singularities. This can most easily be seen by looking at
	the evolution equation for the scalar curvature under the Ricci flow on an
	$n$-dimensional manifold:
	$$\partial_t R= \Delta R+2|Ric|^2 \geq \Delta R+\frac{2}{n}R^2.$$
	
	The term $\frac{2}{n}R^2$ causes a finite time blow up if $\inf_M R>0$ at time zero. As a result, the regularising process of the flow may encounter a finite time singularity before it has finished. This motivates the idea of Ricci flow with surgery, where one resolves the singularities as they occur along the flow, allowing the regularising process to continue. This was used by Perelman \cite{Per1}, \cite{Per2}, \cite{Per3} in dimension three to prove the Poincar\'{e} conjecture. This paper proves the existence of Ricci flow with surgery in four dimensions where, for the first time, the local and global topology change of the underlying manifold is $\mathbb{CP}^2\setminus\{ \mathrm{pt}\} \rightarrow \mathbb{R}^4$, where 
	$\mathbb{CP}^2\setminus\{ \mathrm{pt}\}$ is diffeomorphic to the vector bundle $\mathcal{O}(-1)$. The curvature of the Ricci flow $G(t)$ of Theorem \ref{fullthm} below will blow up in finite time either only at the zero section of $\mathcal{O}(-1)$ or on a tubular neighbourhood of the zero section, or the closure of such a neighbourhood. A major reason why the Ricci flow of Theorem \ref{fullthm} is interesting is that the regularising process of the Ricci flow does not evolve the initial metric $G(t_0)$, which is equal to a member of the family of $U(2)$-symmetric non-K\"{a}hler Ricci flat Taub-Bolt metrics \cite{Bolt} on $\mathbb{CP}^2\setminus\{\text{pt}\}$ outside of some compact set, to a (perhaps different) member of the Taub-Bolt family but instead flows to a member of the hyperk\"{a}hler Taub-NUT family \cite{NUT} on $\mathbb{R}^4$ by either crushing the zero section or a tubular neighbourhood of the zero section to a point or line respectively.  This paper will be interested in Ricci flow that is invariant under a $U(2)$-action on $\mathcal{O}(-1)$, whose action will be described in \ref{TBandFIK}, with the set of fixed points under this action being the zero section of $\mathcal{O}(-1)$. This set of fixed points will be called the bolt.
	
	Fixed points of the Ricci flow are Ricci flat metrics. It is therefore natural to ask about the stability of such metrics under this flow. This paper is inspired by the work of Holzegel, Schmelzer and Warnick \cite{Cla1} where the  stability of a member $g_{\mathrm{Bolt}}$ of the family of Taub-Bolt metrics on $\mathbb{CP}^2\setminus\{ \mathrm{pt}\}$ \cite{Bolt} is analysed. The family of Taub-Bolt metrics will be introduced in more detail in \ref{TN} and will in particular show that all members are isometric up to rescaling. Thus, studying the stability of one member is equivalent to studying the stability of any other member.  The authors above find numerically that the linearisation $\mathcal{D}_{g_{\mathrm{Bolt}}}(-2\text{Ric}_{g})$ of $-2\text{Ric}_{g}$ about $g_{\mathrm{Bolt}}$ has a positive eigenvalue. They call the corresponding symmetric eigentensor the `unstable' direction. The evolution was simulated on a computer to show that if Taub-Bolt is perturbed in the unstable direction so that the size of the bolt is decreased, then under the Ricci flow, the size of the bolt shrinks to zero in finite time with the curvature only blowing at the bolt, giving a finite time singularity. After surgery, the metric flowed to a member of the Taub-NUT family, which will be introduced in more detail in \ref{TN}. The surgery involves the topology change of $\mathbb{CP}^2\setminus \{\text{pt}\}$ to $\mathbb{R}^4$ mentioned above. It is has been conjectured by many that the singularity is modelled on the shrinking K\"{a}hler Ricci soliton FIK \cite{FIK}, which will be described in more detail in \ref{FIK}.
	
	Actually proving the numerical findings of \cite{Cla1} analytically is challenging.
	This paper makes progress towards proving such a conjecture. 
	\begin{theorem} \label{fullthm}
		There is a $U(2)$-symmetric Ricci flow $G(t)$ with surgery such that
		\begin{enumerate}[i)]
			\item The pre-surgery flow $G(t)$ is defined on $[0,T)  \times \mathbb{CP}^2\setminus\{\text{pt}\}$ and $G(t_0)=g_{\mathrm{Bolt}}$ outside some compact set of $\mathbb{CP}^2\setminus\{\text{pt}\}$.
			\item A finite time singularity is formed at time $T$, with the bolt shrinking to zero size and the curvature blows up exactly on either the bolt or some finite size tubular neighbourhood containing the bolt, or the closure of such a neighbourhood.
			\item The singularity is of Type I and is modelled on the K\"{a}hler shrinking Ricci soliton FIK.
			\item Surgery can performed near the bolt to produce a metric $\tilde{G}$ on $\mathbb{R}^4$, with the local topology change $\mathbb{CP}^2\setminus\{ \mathrm{pt}\} \rightarrow \mathbb{R}^4$.
			\item Then the Ricci flow $G(t)$ starting at $\tilde{G}$ is immortal and converges to a member of the Taub-NUT family in the pointed Cheeger-Gromov sense as $t \rightarrow \infty$.
		\end{enumerate}
	\end{theorem}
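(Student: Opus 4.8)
The plan is to work within the class of $U(2)$-symmetric metrics, where the Ricci flow reduces to a system of PDEs (essentially for two scalar functions of one space variable and time, parametrising the size of the orbits and the length of the Hopf fibre). After establishing the reduced equations in this ansatz, I would treat each of the five assertions in turn, with the analytical core being (iii) and (v).

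For (i)--(ii), the plan is to construct the initial metric $G(t_0)$ by gluing: take $g_{\mathrm{Bolt}}$ from the Taub-Bolt family (whose explicit $U(2)$-symmetric form is recorded in \ref{TB}) on the complement of a compact set, and in the interior perturb it in the unstable eigendirection of $\mathcal{D}_{g_{\mathrm{Bolt}}}(-2\mathrm{Ric})$ identified in \cite{Cla1}, in the sense that shrinks the bolt. Short-time existence follows from the standard theory (the $U(2)$-symmetry is preserved since it is an isometry group of the initial data). To force a finite-time singularity at the bolt I would use a barrier/monotonicity argument: comparison with the self-similar FIK solution from below shows the bolt radius cannot persist, and a no-local-collapsing or curvature bound away from a tubular neighbourhood of the zero section confines the blow-up, giving (ii).

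For (iii), the plan is to perform a Type I rescaling at the singular time $T$: set $g_i(t) = \lambda_i g(T + \lambda_i^{-1} t)$ with $\lambda_i \to \infty$ chosen so that $|\mathrm{Rm}|$ stays bounded, and extract a pointed Cheeger-Gromov limit. The $U(2)$-symmetry passes to the limit, the limit is a non-flat shrinking soliton (by Perelman-type $\mathcal{W}$-functional monotonicity/Type I structure results) that is $U(2)$-symmetric and asymptotically conical with link $S^3/\mathbb{Z}_1$; by the classification of such solitons in the $U(2)$-symmetric Kähler setting one identifies it with FIK \cite{FIK}. The key input here is a Type I curvature bound $\sup |\mathrm{Rm}|(T-t) < \infty$, which I would get from the ODE structure of the reduced flow together with the explicit barriers.

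For (iv), surgery: near the bolt, after the Type I rescaling, the geometry is $\epsilon$-close to a piece of FIK, which at its tip is asymptotic to $\mathcal{O}(-1)$ over $\mathbb{CP}^1$; excise a neighbourhood of the almost-singular region and glue in a smooth $U(2)$-invariant cap diffeomorphic to a ball in $\mathbb{R}^4$ (the standard-neck surgery, but with $\mathcal{O}(-1)$-to-$\mathbb{R}^4$ handle topology), producing $\tilde G$ on $\mathbb{R}^4$ with controlled curvature; the topology change $\mathbb{CP}^2\setminus\{\mathrm{pt}\} \to \mathbb{R}^4$ is exactly the collapse of the $\mathbb{CP}^1$ zero section. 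For (v), I would run the flow from $\tilde G$: in the $U(2)$-symmetric class on $\mathbb{R}^4$ the relevant monotone quantity (the expander/Perelman entropy, or an explicit Lyapunov functional from the reduced ODE) forces the flow to exist for all $t$ and to converge, modulo diffeomorphism and scaling, to the unique $U(2)$-symmetric Ricci-flat (ALF, hyperkähler) metric on $\mathbb{R}^4$ with the prescribed NUT charge, namely a member of the Taub-NUT family \cite{NUT}; convergence is in the pointed Cheeger-Gromov sense, possibly after rescaling, using the asymptotics of $\tilde G$ at spatial infinity (which still agree with Taub-Bolt, hence ALF) to pin down which Taub-NUT.

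I expect the main obstacle to be (iii): proving the Type I curvature estimate and, more seriously, ruling out other possible blow-up limits so that one can definitively identify the singularity model as FIK rather than merely "a $U(2)$-symmetric shrinker". Establishing this likely requires a delicate analysis of the reduced system---sharp two-sided barriers trapping the solution in the basin of the FIK soliton---together with a uniqueness statement for $U(2)$-symmetric Kähler shrinkers with the given asymptotic cone. Controlling the flow through and after surgery so that the surgery parameters do not accumulate pathologically, and so that the post-surgery asymptotics are preserved, is the secondary difficulty feeding into (iv)--(v).
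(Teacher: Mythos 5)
Your proposal diverges from the paper's strategy in a way that, in fact, cannot work, and the paper is explicit about why. You propose to force the finite-time singularity via a barrier/monotonicity argument, comparing with FIK from below and using a maximum principle to show the bolt must shrink. But metrics in the relevant class $G_{AF,\mathbb{CP}^2\setminus\{\mathrm{pt}\}}$ have curvature decaying at spatial infinity, so Perelman's pseudolocality keeps the curvature uniformly bounded outside a compact set for a definite time; a global maximum-principle or sub/supersolution comparison of the kind you describe is therefore unavailable, and there is no cheap monotone quantity forcing singularity formation. This is precisely the reason the paper replaces the constructive barrier approach with a non-constructive Wa{\.z}ewski box argument (adapted from Stolarski): one builds a $K$-parameter family $G_{\mathbf{p}}(t_0)$ of initial data by perturbing a metric that interpolates between FIK, its asymptotic cone, and Taub-Bolt along the $U(2)$-symmetric unstable eigenmodes of the weighted Lichnerowicz Laplacian $\Delta_{\overline{g},f}+2\overline{Rm}$, and then shows by a topological degree/retraction contradiction that some parameter $\mathbf{p}^*$ must yield a flow that stays within a shrinking ``box'' of perturbations of the rescaled FIK soliton for all $t<1$. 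Your eigenmode perturbation idea (from Holzegel--Schmelzer--Warnick) is in the right spirit, but a single perturbation and a barrier will not do.

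Your step (iii) has a related gap: you propose to take a Type I blow-up limit and identify it with FIK by appealing to a classification of $U(2)$-symmetric K\"{a}hler shrinkers with prescribed asymptotic cone. No such classification is invoked in the paper, and it is not clear one is available in the generality needed (in particular you must rule out non-K\"{a}hler limits, since the flow is deliberately arranged to be non-K\"{a}hler). The paper sidesteps this entirely: the box argument by construction keeps $h_{\mathbf{p}^*}(t) = \frac{1}{1-t}(\Phi_t^{-1})^*\acute{G}_{\mathbf{p}^*}(t)-\overline{g}$ small in $L^2_f$ and $C^2$ up to $t=1$, so the singularity is modelled on FIK because the flow is trapped near $(1-t)\phi_t^*\overline{g}$, not because a limit was extracted and classified. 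Similarly, for (ii) the paper does not use a no-local-collapsing argument to localize the blow-up; it uses the explicit estimate $c^2\lvert\mathrm{Rm}\rvert\leq C$ together with monotonicity of $c$ and mass conservation to show the singular set is an initial interval $\{s\leq R\}$.

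For (iv)--(v), your sketch is closer in spirit, but the decisive ingredient you omit is that the surgery must land in Di Giovanni's class $G_{AF,\mathbb{R}^4}$ (i.e.\ preserve $b_s,c_s\geq 0$, $u\leq 1$, and the curvature decay), after which convergence to Taub-NUT of the correct mass is an application of \cite[Theorem 1]{FraNUT} --- a specific dynamical stability theorem, not a generic entropy/Lyapunov argument. Since the $U(2)$-symmetric surgery is just an interpolation of the profile functions $b,c$ on $\{s<\bar{R}\}$ respecting these monotonicity constraints, this is what pins down the limit and its mass; a vague appeal to uniqueness of the ALF hyperk\"{a}hler metric on $\mathbb{R}^4$ would not, by itself, give immortality or convergence of the flow.
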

	To produce the finite time singuality modelled on FIK, we will use a Wa{\.z}ewski box argument similar to that deployed by Stolarksi \cite{Stol1}, where it is shown that given an asymptotically conical shrinking soliton, there is a Ricci flow on a compact manifold which forms a local finite time singularity modelled on it. The family of metrics used in the Wa{\.z}ewski box argument \cite{Stol1} is invariant under the action of a finite group. The simplest example, which they construct in \cite[Proposition A.7]{Stol1}, is two copies of a family of perturbations of the given soliton glued onto the two ends of a finite cylinder, so as to have a $\mathbb{Z}_2$-symmetry. The Ricci flow is invariant under this symmetry and so the author only has to analyse the Ricci flow on one 'half' of the manifold. Inspired by this, we will use a family of perturbations of the asymptotically conical shrinking soliton FIK, which we denote by $g_{\mathrm{FIK}}$, glued into $g_{\mathrm{Bolt}}$, to give a metric on a non-compact manifold. This glueing construction echoes that used in the authors proof of the $L^2$-instability of Taub-Bolt \cite{hughes2024l2instabilitytaubboltmetricricci}. Since we do not close up the manifold using two copies of this glueing procedure, analysis on the whole of our manifold will be carried out similar to that of the analysis of Stolarski on the one half of their compact manifold.  However, unlike Stolarski, who doesn't have any symmetry assumptions beyond the $\mathbb{Z}_2$-symmetry, our family of perturbations of $g_{\mathrm{FIK}}$ will be $U(2)$-symmetric.  This $U(2)$-symmetric version of the Wa{\.z}ewski box argument of \cite{Stol1} will be used to obtain a Ricci flow within a special class $G_{AF, \mathbb{CP}^2-\{\mathrm{pt}\}}$ of $U(2)$-symmetric metrics that encounters a finite time singularity modelled on FIK. From the numerical work in \cite{Oxford1}, we expect this blow up behaviour for quite general initial conditions. However the non-constructive box argument only shows the existence of a Ricci flow with the properties stated in Theorem \ref{fullthm}. The reason for the use of a non-constructive argument is that even proving that Ricci flow starting at some $g\in G_{AF, \mathbb{CP}^2-\{\mathrm{pt}\}}$  leads to a finite time singularity is not straightforward. By contrast, in many other scenarios, see \cite{App1}, \cite{Dan2} and \cite{Stol2} for example, showing the existence of a finite time singularity follows easily from a simple use of a maximum principle, with the hard the work only  occurring in a more detailed analysis of the singularity formation. Metrics in $G_{AF, \mathbb{CP}^2-\{\mathrm{pt}\}}$ have curvature decay at infinity and so, by the pseudolocality result \cite{pseudolocality}, a global maximum principle cannot be applied to a Ricci flow starting at $g$. This means proving Theorem \ref{fullthm}  will require a more subtle argument. 	 
	As explained in \cite{Stol1}, the Ricci flow $G(t)$ before surgery can be made to be non-K\"{a}hler by perturbing the initial metric outside a compact set. The surgery process will be flexible enough to ensure that the post surgery initial metric will also be non-K\"{a}hler. We point that the property of $G(t_0)$ being equal to $g_{\mathrm{Bolt}}$ outside some compact set, although in spirit of the instability conjecture of Holzegel, Schmelzer and Warnick, is not necessary to prove the existence of a Ricci flow with all the other properties in Theorem \ref{fullthm}. Indeed, the Wa{\.z}ewski box argument is also flexible enough to allow us to glue $g_{\mathrm{FIK}}$ into any metric in $G_{AF, \mathbb{CP}^2-\{\mathrm{pt}\}}$, of which $g_{\mathrm{Bolt}}$ is a member. 
	
	The $U(2)$-symmetry reduces the Ricci flow to a 2 dimensional system of equations, and so significantly simplified the Ricci flow equation. This will allow us to determine, for Ricci flow in $G_{AF, \mathbb{CP}^2-\{\mathrm{pt}\}}$, exactly where it is possible for the curvature to blow up in finite time. We point out that the proof of Theorem \ref{fullthm} will not show that the curvature only blows up at the bolt. It could be the case that the curvature blows up on of some tubular neighbourhood containing the bolt, or the closure of such a neighbourhood.

	The surgery process will be performed in such a way that the post surgery metric lies in the special class of $U(2)$-symmetric metrics $G_{AF, \mathbb{R}^4}$ introduced in \cite{FraNUT} and the dynamical stability result \cite[Theorem 1]{FraNUT} is used to show the convergence of the flow to the Taub-NUT metric. The surgery will preserve the $U(2)$-symmetry and is very explicit. We will show that it can performed arbitrarily close to the singularity time and on an arbitrarily small set containing the points at which the curvature blows up. We note that rotationally invariant Ricci flow with surgery has been studied considerably: the existence of such a flow with surgery resolving both degenerate and non-degenerate neckpinch singularities has been shown  \cite{MR3544617}, \cite{MR2995432}. Recently, rotationally invariant Ricci flow with surgery starting at any closed rotationally
	invariant Riemannian manifold was constructed \cite{MR4806200}. \\

	Outline of the paper:\\
	
	Section \ref{TBandFIK} will discuss $U(2)$-symmetric cohomogeneity one metrics on $\mathbb{CP}^2\setminus\{\text{pt}\}$ and $\mathbb{R}^4$. In particular, we will introduce the Taub-NUT and FIK metrics. Also, a $U(2)$-symmetric spectral decomposition of the weighted Lichnernowicz Laplacian associated with $g_{\mathrm{FIK}}$ will be constructed.  \\
	
	Section \ref{GAFsection} will present a result \cite{FraNUT} which says that, within a class $G_{AF, \mathbb{R}^4}$ of metrics, the family of Taub-NUT metrics are global attractors under the Ricci flow. An analogue $G_{AF, \mathbb{CP}^2\setminus\{\text{pt}\}}$ will be defined and Ricci flow within this class will be analysed. \\
	
	Section \ref{setupsection} will set up the Wa{\.z}ewski box argument, which will only differ from the set up of \cite{Stol1} in that the eigenmodes of the weighted Lichnerowicz Laplacian will be $U(2)$-symmetric and FIK, our asymptotically conical shrinker, will be glued into $\mathbb{CP}^2\setminus\{\text{pt}\}$ with a metric which is equal to $g_{\mathrm{Bolt}}$ outside some compact set. \\
	
	Section \ref{final} will prove Theorem \ref{fullthm2} which is a slightly stronger statement than Theorem \ref{fullthm} that can only be stated after introducing the notation of Section \ref{GAFsection}. Theorem \ref{fullthm2} asserts that, up to diffeomorphism, the member of the Taub-NUT family that the post surgery flow converges to is completely determined by the asymptotics of the pre surgery initial condition $G(t_0)$. Using Section \ref{setupsection}, the existence of a Ricci flow in $G_{AF, \mathbb{CP}^2\setminus\{\text{pt}\}}$ will be proven which encounters a finite time singularity modelled on FIK. Using Section \ref{GAFsection}, the possible regions on which the curvature blows up will be identified which will allow surgery to be performed to then yield a Ricci flow that converges to a member of the Taub-NUT family. \\

	$\mathbf{Acknowledgements.}$ I would like to thank my
	supervisor Jason Lotay for his support and advice.

	$\textbf{Conflicts of interest}.$
	Research supported by a scholarship from EPSRC (grant number EP/W524311/1).

	\section{The Taub-NUT, Taub-Bolt and FIK metrics}\label{TBandFIK}
	In this section we introduce two $U(2)$-symmetric cohomogeneity one metrics,  the Taub-NUT and Taub-Bolt families of Ricci flat metrics, and the K\"{a}hler Ricci shrinker FIK. But first we introduce the concept of a nut and a bolt of a Riemannian manifold with an $S^1$ isometry group and discuss $SU(2)$-symmetric four dimensional cohomogeneity one metrics. After that a $U(2)$-symmetric spectral decomposition of the weighted Lichnernowicz Laplacian associated with $g_{\mathrm{FIK}}$ will be shown to exist.
	\subsection{Nuts and bolts}
	Suppose $(M,g)$ is a four dimensional connected complete Riemannian manifold with an $S^1$ group of isometries. Given a fixed point $p\in M$ of this group of isometries, one of the following holds \cite{Haw1}:

	\begin{enumerate}[i)]
		\item The point $p$ is an isolated fixed point. This fixed point is called a nut.
		\item The point $p$ is part of a two dimensional submanifold of fixed points. This submanifold is called a bolt.
		\item The whole space $M$ is fixed by the action.
	\end{enumerate}

	\subsection{Four dimensional cohomogeneity one $SU(2)$-symmetric metrics}\label{U(2)}
	We follow the notation and set up of \cite{Dancer}. Let $(M^{4},g)$ be a Riemannain manifold and let $SU(2)$ act by isometries on $M$ with $3$ dimensional principal orbits (i.e. $g$ is of cohomogeneity one). Let $\gamma(s)$ be a unit speed geodesic which intersects all principal orbits orthogonally. Suppose the principal isometry group along $\gamma(s)$ is trivial. Then there is an equivariant diffeomorphism 
	\begin{equation*}
		\Phi: I\times (SU(2))\rightarrow M_{0}
	\end{equation*}
	defined by $\Phi(s,g)=g\cdot \gamma(s)$, where $M_{0}$ is a dense subset of $M$ and $I$ is an open interval of $\mathbb{R}$. It follows that 
	\begin{equation*}
		\Phi^{*}(g)=ds^2+g_{s},
	\end{equation*}
	where $g_{s}$ is a one-parameter family of $SU(2)$-invariant metrics on $SU(2)=S^3$. 
	
	We will be interested in the cases where $M=\mathbb{CP}^2\setminus\{\text{pt}\}$ or $M=\mathbb{R}^4$, (we will explain how $SU(2)$ acts on $M$ in $\ref{boundaryconditions}$). Then 
	\begin{equation*}
		\Phi^{*}(g)=ds^2+g_{s}
	\end{equation*}
	for $g_{s}$ a one-parameter family of $SU(2)$-invariant metrics on $SU(2)=S^3$. We now describe the $SU(2)$-invariant metrics on $S^3$.
	We can put coordinates on $S^{3}$ via the map: 
	\begin{equation} \label{S^3 coordinates}
		(\theta, \phi, \psi) \mapsto \left(\cos\left(\frac{\theta}{2}\right)e^{\frac{\psi+\phi}{2}i}, \sin\left(\frac{\theta}{2}\right)e^{\frac{\psi - \phi}{2}i}\right),
	\end{equation}
	where $0<\theta<\pi, 0<\phi<2\pi, 0<\psi<4\pi$. The coordinates come from the Hopf fibration $\pi: S^{3} \rightarrow S^{2}$ where $\theta, \phi$ describe the $S^{2}$ and $\psi$ is a coordinate on the fibres. 
	Define the following basis of $SU(2)$-invariant $1$-forms
	\begin{align*}
		&\sigma_{1}= \frac{1}{2}\left(-\sin\psi d\theta+ \cos\psi \sin\theta d\phi\right),\\
		&\sigma_{2}= \frac{1}{2}\left(\cos\psi d\theta + \sin\psi \sin\theta d\phi\right),\\
		&\sigma_{3}= \frac{1}{2}\left(d\psi+ \cos\theta d\phi\right).
	\end{align*}
	If $X_{1}, X_{2}, X_{3}$ is dual to $\sigma_{1}, \sigma_{2}, \sigma_{3}$, then $X_{3}=\partial_{\psi}$ is tangent to the Hopf fibres.  We also have $\sigma_{1}^2+\sigma_{2}^2= \pi^{*}g_{S^2\left({\frac{1}{2}}\right)}$, where $S^2\left(\frac{1}{2}\right)$ is the sphere of radius $\frac{1}{2}$.
	The ${\sigma_{i}}'s$ are normalised so that the standard metric on $S^{3}$ of radius 1 can be written in the simply form:
	\begin{equation*} 
		g_{S^3_{1}}=\frac{1}{4}\left(\left(d\psi + \cos\theta d\phi\right)^2 + d\theta^2+ \sin^{2}\theta d\phi^2\right)= \sigma_{1}^2 + \sigma_{2}^2 + \sigma_{3}^2.
	\end{equation*}  
	
	Four dimensional cohomogeneity one $SU(2)$-symmetric metrics $g$ defined on $M$ can be written on an open dense subset $I \times S^3$ as:
	\begin{equation*}
		g=ds^2+ a^2(s)\sigma_{1}^2+b^2(s)\sigma_{2}^2+c^2(s)\sigma_{3}^2,
	\end{equation*}
	where $a(s), b(s), c(s)>0$ for all $s\in I$ \cite{Hawking}.

	Viewing $S^{3} \subset \mathbb{C}^2$, then multiplying the coordinates by unit complex numbers gives a $U(1)$-action on $S^{3}$, generated by $\partial_{\psi}$. Demanding that the metric be invariant under this $U(1)$-action on $S^3$ forces $a=b$, and hence results in the following $U(2)$-symmetric metric on $I \times S^3$
	\begin{equation}\label{local}
		\begin{split}
			g&=ds^2+ b^2(s)\left(\sigma_{1}^2+\sigma_{2}^2\right)+c^2(s)\sigma_{3}^2\\
			&=ds^2+ b^2(s)\pi^{*}g_{S^2({\frac{1}{2}})}+c^2(s)\sigma_{3}^2.
		\end{split}
	\end{equation}
	Throughout the rest of this paper the following notation will sometimes be used: that for a function $f$ of a variable $x$, we write $f_x\coloneqq\partial_x f$. 
	\subsection{Boundary conditions} \label{boundaryconditions}
	Let 
	\begin{equation}\label{eq:1}
		g=ds^2+ b^2(s)\left(\sigma_{1}^2+\sigma_{2}^2\right)+c^2(s)\sigma_{3}^2, \qquad s>0,
	\end{equation}
	be a metric on $\mathbb{R}_{>0} \times S^3$, where $b(s), c(s) >0$ for all $s>0$ are smooth functions (i.e. $s$ is the standard coordinate on $\mathbb{R}_{>0}$ and the $\sigma_{i}$'s are 1-forms on $S^3$). If one, or both, of $\lim_{s\rightarrow 0}b(s)=0$ or $\lim_{s\rightarrow 0} c(s)=0$ holds, then the metric (\ref{eq:1}) can close up at $s=0$ to give a $U(2)$-symmetric metric on a topology different to $\mathbb{R}_{>0} \times S^3$. In this subsection we give a condition on $b,c$ so that a metric (\ref{eq:1}) closes up to give a well-defined metric on $\mathbb{R}^4$ or $\mathbb{CP}^2\setminus\{\text{pt}\}$.

	\subsubsection{$\mathbb{R}^4$}
	We can view $\mathbb{R}^4$ as $\mathbb{R} \times S^3$ with a point added at the origin, and $U(2)$ acts in the obvious way.  The conditions for a metric (\ref{eq:1}) to close up at $s=0$ (i.e. at the origin) to give a well-defined metric on $\mathbb{R}^4$ are:
	\begin{condition}\label{R^4conds}
		\hfill\break
		\vspace*{-0.2in}
		\begin{enumerate}[i)]
			\item $b(s)$ and  $c(s)$ can be extended to smooth odd functions on $\mathbb{R}$,
			\item  $b_{s}(s), c_{s}(s) \rightarrow 1$ as $s \rightarrow 0$.
		\end{enumerate}
	\end{condition}
	With $\psi$ defined via (\ref{S^3 coordinates}), the vector field $s\partial_{\psi}$ on $S^3$ can be extended to a vector field on $\mathbb{R}^4$.
	For a metric (\ref{eq:1}) satisfying Condition \ref{R^4conds}, the origin is the nut of the metric since it is the only fixed point of the $U(1)$ group of isometries generated by $s\partial_{\psi}$.

	\subsubsection{$\mathbb{CP}^2\setminus\{\mathrm{pt}\}$}
	
	The space $\mathbb{CP}^2\setminus\{\text{pt}\}$ is diffeomorphic to the vector bundle $\mathcal{O}(-1)$ over $S^2$, which is $\mathbb{C}^2$ blown up at the origin. Thus $\mathbb{CP}^2\setminus\{\text{pt}\}$ can be thought of as $\mathbb{C}^2\setminus\{\text{pt}\}= \mathbb{R} \times S^3$ with a $\mathbb{CP}^1$ glued at the origin, and $U(2)$ acts in the obvious way on $\mathbb{R} \times S^3$ while fixing the $\mathbb{CP}^1$ at the origin. The conditions for a metric (\ref{eq:1}) to close up at $s=0$ (i.e. at the $\mathbb{CP}^1$ located at the origin) to give a well defined metric on $\mathbb{CP}^2\setminus\{\text{pt}\}$ are:
	\begin{condition}\label{0(-1)cond}
		\hfill\break
		\vspace*{-0.2in}
		\begin{enumerate}[i)] 
			\item $b(s)$ and $c(s)$ can be extended to even and odd smooth functions respectively on $\mathbb{R}$,
			\item $b(s) \rightarrow \beta >0$ as $s \rightarrow 0$.
		\end{enumerate}
	\end{condition}
	With $\psi$ defined via (\ref{S^3 coordinates}), the vector field $s\partial_{\psi}$ on $S^3$ can be extended to a vector field on $\mathbb{CP}^2\setminus\{\text{pt}\}$.  
	For a metric (\ref{eq:1}) satisfying Condition \ref{0(-1)cond}, the 2-sphere at $s=0$ is the bolt of this metric (\ref{eq:1}) since it is the fixed points set of the $U(1)$ group of isometries generated by $s\partial_{\psi}$. The bolt has radius $b(0)$, and so has area/size $4\pi b^2(0)$.
	
	\subsection{Three families of four dimensional cohomogeneity one $U(2)$-symmetric metrics}

	\subsubsection{Taub-NUT}\label{TN}
	The family of Taub-NUT metrics on $\mathbb{R}^4$ was first discovered by Newman, Unti, Tamburino \cite{NUT}, and consists of all the metrics $\alpha g_{\mathrm{NUT},n}$ for $\alpha,n>0$, where $g_{\mathrm{NUT},n}$ can be written in a global frame as:
	\begin{equation}\label{TNeq}
		g_{\mathrm{NUT},n}=\frac{r+2n}{r}{dr}^{2}+ 4r(r+2n)\left(\sigma_{1}^{2}+\sigma_{2}^{2}\right)+ 16n^{2}\frac{r}{r+2n}\sigma_{3}^2, \text{ }r>0.
	\end{equation}
	The coordinate $r$ is a coordinate on $\mathbb{R}_{>0}$, and the ${\sigma_{i}}$'s are as in \ref{U(2)}. This makes the local expression (\ref{TNeq}) a metric on $\mathbb{R}_{>0} \times S^3$. When $$s= \int_{0}^{r} \sqrt{\frac{\bar{r}+2n}{\bar{r}}}d\bar{r},$$
	equation (\ref{TNeq}) can be written in the form (\ref{eq:1}) with $b,c$ satisfying the Condition \ref{R^4conds} to give a metric on $\mathbb{R}^4$. With suitable complex structures on $\mathbb{R}^4$ each Taub-NUT metric $\alpha g_{\mathrm{NUT,n}}$ is hyperk\"{a}hler. Up to rescaling and pullback by diffeomorphisms, the Taub-NUT family has only one member. Indeed, if we rescale  $\phi(r)=\frac{n}{m}r$  then $\phi^{*}g_{\mathrm{NUT},n}=\frac{n^2}{m^2}g_{\mathrm{NUT},m}$.
	
	\subsubsection{Taub-Bolt}
	The family of Taub-Bolt metrics on $\mathbb{CP}^2\setminus\{\text{pt}\}$  was first discovered by Page \cite{Bolt}, and consists of all the metrics $\alpha g_{\mathrm{Bolt},n}$ for $\alpha, n>0$, where $g_{\mathrm{Bolt},n}$ can be written in a local frame as:
	\begin{equation}\label{TB}
		g_{\mathrm{Bolt},n}=\frac{(r+n)(r+3n)}{r\left(r+\frac{3n}{2}\right)}dr^{2}+ 4(r+n)(r+3n)\left(\sigma_{1}^{2}+\sigma_{2}^{2}\right)+ 16n^{2}\frac{r\left(r+\frac{3n}{2}\right)}{(r+n)(r+3n)}\sigma_{3}^2,
	\end{equation}
	for $r>0$. The coordinate $r$ is a coordinate on $\mathbb{R}_{>0}$, and the ${\sigma_{i}}'s$ are as in Subsection \ref{U(2)}. This makes the local expression (\ref{TB}) a metric on $\mathbb{R}_{>0} \times S^3$.
	When $$s= \int_{0}^{r} \sqrt{\frac{(\bar{r}+n)(\bar{r}+3n)}{\bar{r}\left(\bar{r}+\frac{3n}{2}\right)}} d\bar{r},$$ equation (\ref{TB}) can be written in the form (\ref{eq:1}) with $b,c$ satisfying Condition \ref{0(-1)cond} to give a metric on $\mathbb{CP}^2\setminus\{\text{pt}\}$.
	
	We note that the metrics $g_{\mathrm{Bolt},n}$ and $g_{\mathrm{Bolt},m}$ are isometric up to scaling. Indeed, if  we consider the diffeomorphism $\phi(r)=\frac{n}{m}r$ of $\mathbb{CP}^2-\{\text{pt}\}$ then $\phi^{*}g_{\mathrm{Bolt},n}=\frac{n^2}{m^2}g_{\mathrm{Bolt},m}$. For the rest of the paper, we will set $g_{\mathrm{Bolt}}=g_{\mathrm{Bolt},1}$.
	
	\subsubsection{FIK}\label{FIK}
	A Ricci soliton $(M,g,X)$ is a triple consisting of a Riemannian manifold $(M,g)$ and a vector field $X$ on $M$ such that 
	\begin{equation}\label{solitoneq}
		Ric+ \mathcal{L}_{X}g=\lambda g,
	\end{equation}
	for some $\lambda \in \mathbb{R}$. Let $\phi_{t}:M\rightarrow M$ be the family of diffeomorphisms satisfying 
	\begin{equation}\label{phi_t}
		\partial_{t} \phi_{t}=\frac{1}{1-2\lambda t} X \circ \phi_{t}, \qquad \phi_{0}=\mathrm{Id}_{M}.
	\end{equation}
	Then a solution to the Ricci flow equation (\ref{RFequation}) is given by $$g(t)=(1-2\lambda t)\phi_{t}^* g.$$ If $X=\nabla f$ for some function $f$ then $(M,g,X)=(M,g,f)$ is called a gradient Ricci soliton. 
	An example of a Ricci soliton central to this paper is $(\mathbb{CP}^2\setminus\{\text{pt}\}, g_{\mathrm{FIK}}, f_{\mathrm{FIK}})$, a K\"{a}hler shrinking ($\lambda>0$) gradient Ricci soliton called FIK.
	\begin{theorem}\label{FIKthm}
		Let $C_{\mathrm{FIK}}\coloneqq (C(S^3), g_{C_{\mathrm{FIK}}})$ be the cone over the sphere $(S^3, g_{S^3_\mathrm{FIK}} \coloneqq \frac{1}{\sqrt{2}}(\sigma_{1}^2+\sigma_{2}^2)+\frac{1}{2}\sigma_{3}^2)$. Then there exists a $U(2)$-symmetric gradient K\"{a}hler Ricci shrinker $(\mathbb{CP}^2\setminus \{\mathrm{pt}\}, g_{\mathrm{FIK}}, f_{\mathrm{FIK}})$ (with $\lambda=\frac{1}{2}$ in (\ref{solitoneq})) that is asymptotically conical with asymptotic cone $C_{\mathrm{FIK}}$. 
		When $g_{\mathrm{FIK}}$ is written in the form 
		\begin{equation*}
			g_{\mathrm{FIK}}=ds^2+ b_{\mathrm{FIK}}^2(s)\left(\sigma_{1}^2+\sigma_{2}^2\right)+c_{\mathrm{FIK}}^2(s)\sigma_{3}^2, \qquad s>0,
		\end{equation*}
		we have the following:
		\begin{enumerate}[i)]
			\item $b(s) \rightarrow \frac{s}{\sqrt[4]{2}}$, $c(s) \rightarrow \frac{s}{\sqrt{2}}$ as $s\rightarrow \infty$;
			\item There exists $\delta>0$ such that $\frac{c}{b}(s)\leq 1-\delta$, $b_s,c_s>0$ for all $s>0$:
			\item $b_s=\frac{c}{b}$ for all $s>0$.
		\end{enumerate}
		Finally, let $\phi_{t}$ be the family of diffeomorphisms satisfying (\ref{phi_t}) with $X=\nabla f_{\mathrm{FIK}}$, and let $g_{\mathrm{FIK}}(t)=(1-t)\phi^*_t g_{\mathrm{FIK}}(0)$ denote the Ricci flow with initial condition $g_{\mathrm{FIK}}$. Then there exists a smooth family of diffeomorphisms $$\Psi_t =  \phi_t \circ \Psi : C_{\mathrm{FIK}}  \to (\mathbb{CP}^2\setminus \{\mathrm{pt}\})\setminus \{s=0\}$$ such that 
		$$ ( 1 - t) \Psi_t^*  g_{\mathrm{FIK}} \xrightarrow[t \nearrow 1]{ C^\infty_{loc} ( C(S^3), g_{C_{\mathrm{FIK}}} ) }		g_{C_{\mathrm{FIK}}}.$$
	\end{theorem}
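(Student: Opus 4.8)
The plan is to derive the whole statement from the explicit Feldman--Ilmanen--Knopf construction \cite{FIK} of the $U(2)$-invariant K\"ahler shrinking soliton on the total space of $\mathcal{O}(-1)\cong\mathbb{CP}^2\setminus\{\mathrm{pt}\}$ over $\mathbb{CP}^1$, translating their data into the cohomogeneity-one arclength gauge and then into a statement about the rescaled soliton flow.

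First I would recall the Calabi-type ansatz: a $U(2)$-invariant K\"ahler metric on $\mathcal{O}(-1)$ is described by a momentum profile, a function of the moment-map coordinate, and imposing the shrinker equation $\mathrm{Ric}+\mathrm{Hess}\,f=\tfrac12 g$ reduces, as in \cite{FIK}, to a single first-order ODE for the profile that admits an explicit (rational) solution. From this solution one reads off that the metric closes up smoothly across the zero section $\mathbb{CP}^1$ (so Condition~\ref{0(-1)cond} holds, with $b(0)=\beta>0$ and $c(0)=0$), that it is complete with a single non-compact end, and that this end is asymptotically conical with the prescribed link $(S^3,g_{S^3_{\mathrm{FIK}}})$. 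Passing to arclength $s$ along the geodesic meeting the principal orbits orthogonally puts $g_{\mathrm{FIK}}$ in the stated form, and a standard computation in this gauge shows that the K\"ahler condition for the ansatz is exactly $b_s=c/b$, which is (iii).

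Next, (i) follows by extracting the leading asymptotics of the momentum profile at the end: asymptotic conicality with link $g_{S^3_{\mathrm{FIK}}}=\tfrac{1}{\sqrt2}(\sigma_1^2+\sigma_2^2)+\tfrac12\sigma_3^2$ forces $b^2\sim r^2/\sqrt2$ and $c^2\sim r^2/2$ in terms of the cone radial coordinate $r$, and $r\sim s$ since the $ds^2$-coefficient is $1$, whence $b(s)\to s/\sqrt[4]{2}$ and $c(s)\to s/\sqrt2$. For (ii), the monotonicity $b_s,c_s>0$ is visible in the explicit profile (and $b_s>0$ for $s>0$ is in any case equivalent to $c>0$ there via (iii), $c$ vanishing only on the bolt); the bound $c/b\le 1-\delta$ follows because $c/b=b_s$, and from the explicit solution $b_s$ increases from $b_s(0)=0$ (as $b$ extends to an even function of $s$) up to its limiting value $2^{-1/4}<1$ at $s=\infty$, so one may take $\delta=1-2^{-1/4}$.

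Finally, for the rescaled-flow convergence, observe that $(1-t)\Psi_t^*g_{\mathrm{FIK}}=\Psi^*\big((1-t)\phi_t^*g_{\mathrm{FIK}}\big)=\Psi^* g_{\mathrm{FIK}}(t)$, so, taking $\Psi$ to be the radial identification supplied by asymptotic conicality, it suffices to show $\Psi^* g_{\mathrm{FIK}}(t)\to g_{C_{\mathrm{FIK}}}$ in $C^\infty_{loc}(C(S^3),g_{C_{\mathrm{FIK}}})$ as $t\nearrow1$. Near the end $\nabla f_{\mathrm{FIK}}$ is asymptotic to the Euler field $\tfrac12 r\partial_r$ of the cone, so the flow $\phi_t$ of $\tfrac{1}{1-t}\nabla f_{\mathrm{FIK}}$ is asymptotic to the dilation $r\mapsto r/\sqrt{1-t}$; since $g_{C_{\mathrm{FIK}}}$ scales by $\mu^2$ under $r\mapsto\mu r$, the exact cone satisfies $(1-t)(\text{dilation})^*g_{C_{\mathrm{FIK}}}=g_{C_{\mathrm{FIK}}}$ identically, while the asymptotically conical error $\Psi^* g_{\mathrm{FIK}}-g_{C_{\mathrm{FIK}}}$, evaluated over a fixed compact subset of the cone at $\phi_t$-images that escape to infinity as $t\nearrow1$, tends to zero together with all its covariant derivatives. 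I expect this last step to be the main obstacle: making the convergence quantitative in $C^\infty_{loc}$ requires all-order decay estimates for $\Psi^* g_{\mathrm{FIK}}-g_{C_{\mathrm{FIK}}}$, uniform as one pushes to infinity, and care that the equivariant diffeomorphisms $\Psi$ and $\phi_t$ respect the cone's dilation structure; these estimates are available either from the explicit momentum profile or from general decay theory for asymptotically conical shrinkers, but assembling them cleanly is where the real work lies, the remainder being essentially a transcription of \cite{FIK}.
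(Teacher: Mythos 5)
Your proposal takes essentially the same approach as the paper, which derives everything directly from the explicit Feldman--Ilmanen--Knopf construction, identifies (iii) as the cohomogeneity-one K\"ahler condition, and deduces $c/b\leq 1-\delta$ from the endpoint values $c/b(0)=0$ and $\lim_{s\to\infty}c/b=2^{-1/4}$ (via L'H\^{o}pital) together with $c/b\leq 1$ from \cite{FIK}. The only minor differences are that you invoke monotonicity of $b_s$ to obtain the sharp constant $\delta=1-2^{-1/4}$ whereas the paper needs only the weaker bound $c/b\leq 1$, and that the paper treats the blowdown convergence to the cone as an immediate consequence of \cite{FIK} rather than as the main obstacle you anticipate.
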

	
	\begin{proof}
		Apart from the inequality $\frac{c_{\mathrm{FIK}}}{b_{\mathrm{FIK}}} \leq 1-\delta$ part ii), and part iii), we have that Theorem \ref{FIKthm} is an immediate consequence of \cite{FIK}. We have $\frac{c_{\mathrm{FIK}}}{b_{\mathrm{FIK}}}(0)=0$, and $\frac{c_{\mathrm{FIK}}}{b_{\mathrm{FIK}}} \rightarrow \frac{1}{\sqrt[4]{2}}$ as $s\rightarrow \infty$ using L'H\^{o}pital's rule. By \cite{FIK}, $\frac{c_{\mathrm{FIK}}}{b_{\mathrm{FIK}}}(s) \leq 1$ for all $s\geq 0$. Therefore there exists $\delta>0$ such that $\frac{c_{\mathrm{FIK}}}{b_{\mathrm{FIK}}}\leq 1-\delta$ for all $s\geq0$. 
		
		Part iii) holds because it is the condition for a metric of the form (\ref{eq:1}) on $\mathbb{CP}^2\setminus \{\mathrm{pt}\}$ to be K\"{a}hler with respect to the standard complex structure on $\mathbb{CP}^2\setminus \{\mathrm{pt}\}$, which is the case for FIK \cite{FIK}.
	\end{proof}
	
	To keep the set up of our box argument as similar to \cite{Stol1} to make comparison between the two as easy as possible we choose $\lambda=\frac{1}{2}$ in Theorem \ref{FIKthm}. 	
	For the rest of this paper, we will fix the notation $$(\mathbb{CP}^2\setminus \{\mathrm{pt}\}, \overline{g}, f)\coloneqq ( \mathbb{CP}^2\setminus\{\text{pt}\}, g_{\mathrm{FIK}}, f_{\mathrm{FIK}})$$ with $\phi_{t}$ the associated family of diffeomorphisms. Furthermore, the Levi-Civita connection associated with $\overline{g}=g_{\mathrm{FIK}}$ will be denoted by $\overline{\nabla}$.

	\subsection{$\Delta_{\overline{g},f}+2\overline{Rm}$ and its spectrum}
	Only for the rest of this Section, to ease the notation, \ref{TBandFIK}, set $M=\mathbb{CP}^2\setminus \{\mathrm{pt}\}$.
	Let $\Gamma(M,\text{Sym}^{2}T^{*}M)$ denote the space of sections of $\text{Sym}^{2}T^{*}M$.
	Following \cite{Stol1}, to obtain the family of initial data for Ricci flow, crucial to the application of the Wa{\.z}ewski box argument, we will need a spectral decomposition result for the operator $\Delta_f+2\overline{Rm}$ defined by
	$$h_{ij}\mapsto \Delta_{\overline{g}} h_{ij}-\overline{\nabla}_{\overline{\nabla} f}h_{ij} +2\overline{g}^{kp}\overline{g}^{lq}\overline{R}_{iklj}h_{pq},$$  but now acting on $U(2)$-symmetric tensors in $\Gamma(M,\text{Sym}^{2}T^{*}M)$.

	\begin{definition}
		For any $m \in \mathbb{N}$, $H^m_f$ is the completion of 
		\begin{equation*}
			\left\{ T \in \Gamma(M,\text{Sym}^{2}T^{*}M)  : \sum_{j=0}^m \int_\Omega \lvert \overline{\nabla}^j T\rvert^2_{\overline{g}} e^{-f} dV_{\overline{g}} < \infty  \right\}
		\end{equation*}	
		with respect to the inner product
		$$( S , T )_{H^m_f} \coloneqq  \sum_{j=0}^m \int_\Omega \langle \overline{\nabla}^j S,  \overline{\nabla}^j T \rangle e^{-f} dV_{\overline{g}} .$$
		We write $H^0_{f}$ as $L^2_{f}$.
	\end{definition}

	Define 
	$$D(\Delta_{\overline{g},f})\coloneqq \{h\in H_{f}^1: \Delta_{\overline{g},f}h \in L_f^2\},$$
	where here $\Delta_{\overline{g},f}h$ is taken in the sense of distributions. As explained in \cite{Stol1}, using \cite[Theorem 4.6]{extend} we can extend $\Delta_{\overline{g},f}$ to a self-adjoint operator 
	$$\Delta_{\overline{g},f}: D(\Delta_{\overline{g},f}) \rightarrow L_f^2 (M, \text{Sym}^2 T^*M)$$
	that agrees with the usual $\Delta_{\overline{g},f}$ operator on smooth, compactly supported 2-tensors. 	
	Since $\overline{g}$ is a complete, connected shrinker with bounded curvature, it follows that $\Delta_{\overline{g},f} +2\overline{Rm}$ also defines a self-adjoint operator 
	$$\Delta_{\overline{g},f}+2\overline{Rm}: D(\Delta_{\overline{g},f}) \rightarrow L_f^2 (M, \text{Sym}^2 T^*M),$$
	called the weighted Lichnernowicz Laplacian associated with $\overline{g}$, that is bounded above \cite[Lemma 2.17]{Stol1}  and has compact resolvent \cite[Lemma 2.18]{Stol1}.
	This yields the following theorem about the spectrum of $\Delta_{\overline{g},f} + 2\overline{Rm}$.
	\begin{theorem}[\cite{Stol1}, Theorem 2.19]\label{Spectrum}
		There exists a smooth orthonormal basis $\{ h_j \}_{j = 1}^\infty$ of $L^2_f$ 
		such that, for all $j \in \mathbb{N}$, $h_j \in D(\Delta_{\overline{g},f})$ is an eigenmode of $\Delta_{\overline{g},f} + 2\overline{Rm}$ with eigenvalue $\lambda_j \in \mathbb{R}$,
		and the eigenvalues $\{ \lambda_j \}_{j= 1}^\infty$ satisfy $\lambda_1 \ge \lambda_2 \ge \dots$.
		
		Moreover, the eigenvalues $\{ \lambda_j \}_{j = 1}^\infty$ are each of finite-multiplicity, are given by the min-max principle, and tend to $-\infty$ as $j \to \infty$.
		The spectrum $\sigma ( \Delta_{\overline{g},f} + 2 \overline{Rm} )$ of $\Delta_{\overline{g},f} + 2 \overline{Rm} : D( \Delta_{\overline{g},f}) \to L^2_f$ equals $\{\lambda_j \}_{j = 1}^\infty$.
	\end{theorem}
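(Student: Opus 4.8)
The plan is to obtain the entire statement from the two structural properties already recorded for the operator $L \coloneqq \Delta_{\overline{g},f} + 2\overline{Rm}$: it is self-adjoint on $D(\Delta_{\overline{g},f})$, it is bounded above (\cite[Lemma 2.17]{Stol1}), and it has compact resolvent (\cite[Lemma 2.18]{Stol1}). Everything else is an application of the spectral theorem for semibounded self-adjoint operators with compact resolvent, together with interior elliptic regularity for the eigenmodes.

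\emph{Step 1: abstract spectral decomposition.} Choose $C \in \mathbb{R}$ with $L \le C$ and set $A \coloneqq (C+1)\mathrm{Id} - L$, which is self-adjoint on $D(\Delta_{\overline{g},f})$ with $A \ge \mathrm{Id}$. Hence $A$ is invertible with everywhere-defined bounded inverse $R \coloneqq A^{-1}$ satisfying $0 < R \le \mathrm{Id}$; in particular $R$ is injective. Since $C+1 \notin \sigma(L)$ and $L$ has compact resolvent, $R = -(L - (C+1)\mathrm{Id})^{-1}$ is compact. The spectral theorem for injective positive compact self-adjoint operators gives an orthonormal basis $\{h_j\}_{j=1}^\infty$ of $L^2_f$ of eigenvectors of $R$ with eigenvalues $\mu_j > 0$ of finite multiplicity and $\mu_1 \ge \mu_2 \ge \cdots \to 0$. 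Each $h_j \in \mathrm{Range}(R) = D(\Delta_{\overline{g},f})$ and $L h_j = \lambda_j h_j$ with $\lambda_j \coloneqq C + 1 - \mu_j^{-1}$; these are real, of finite multiplicity, satisfy $\lambda_1 \ge \lambda_2 \ge \cdots$, and tend to $-\infty$. Finally, $\sigma(R) = \{\mu_j\}_j \cup \{0\}$ with $0 \notin \sigma_p(R)$, so the resolvent correspondence $z \in \sigma(L) \iff (C+1-z)^{-1} \in \sigma(R)$ (valid for $z \ne C+1$, and $C+1 \notin \sigma(L)$) yields $\sigma(L) = \{\lambda_j\}_{j=1}^\infty$; the point $0 \in \sigma(R)$ reflects the escape $\lambda_j \to -\infty$ and contributes nothing to $\sigma(L)$.

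\emph{Step 2: min-max and smoothness.} A weighted integration by parts (using that $\Delta_{\overline{g},f}$ is the drift Laplacian, symmetric with respect to $e^{-f}\,dV_{\overline{g}}$) shows that the quadratic form of $L$ is $\langle L h, h\rangle_{L^2_f} = -\lVert \overline{\nabla} h\rVert_{L^2_f}^2 + 2\langle \overline{Rm}(h), h\rangle_{L^2_f}$ with form domain $H^1_f$; the second term is bounded above since $\overline{Rm}$ is bounded. The Courant--Fischer min-max principle for semibounded self-adjoint operators with compact resolvent then gives, for each $j$, $\lambda_j = \sup\bigl\{ \inf_{0 \ne h \in V} \langle L h, h\rangle_{L^2_f} / \lVert h\rVert_{L^2_f}^2 \ :\ V \subset H^1_f,\ \dim V = j \bigr\}$, which can alternatively be deduced from the min-max characterization of the $\mu_j$ via the strictly decreasing bijection $\mu \mapsto C+1-\mu^{-1}$. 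For smoothness, each $h_j \in D(\Delta_{\overline{g},f}) \subset H^1_f$ is a weak solution on all of $M$ (including across the bolt $\{s=0\}$) of the linear second-order elliptic system $\Delta_{\overline{g}} h_j - \overline{\nabla}_{\overline{\nabla} f} h_j + 2\,\overline{Rm} \ast h_j = \lambda_j h_j$, whose coefficients are smooth because $\overline{g}$, the soliton potential $f$, and the curvature are all smooth; interior elliptic regularity ($L^2_{loc} \Rightarrow W^{2,2}_{loc}$, then Sobolev embedding and bootstrapping, or Schauder estimates) gives $h_j \in C^\infty(M, \text{Sym}^2 T^*M)$, a purely local conclusion insensitive to the weight.

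\emph{Main obstacle.} Granting \cite[Lemmas 2.17, 2.18]{Stol1}, none of the above is hard; the substantive work is in those two inputs, above all the compact resolvent property. That rests on the compactness of the embedding $H^1_f \hookrightarrow L^2_f$, which in turn uses that the soliton potential $f$ is proper and grows quadratically at infinity --- so that $e^{-f}$ is a confining weight, as for the Ornstein--Uhlenbeck/harmonic-oscillator operator --- a fact extracted from the soliton identities and the asymptotically conical structure of $g_{\mathrm{FIK}}$ recorded in Theorem \ref{FIKthm}.
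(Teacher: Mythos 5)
Your proposal is correct and follows the same route the paper implicitly takes: the paper does not re-prove the theorem but cites \cite[Theorem 2.19]{Stol1} after recording precisely the three structural inputs you use (self-adjointness, boundedness above from \cite[Lemma 2.17]{Stol1}, and compact resolvent from \cite[Lemma 2.18]{Stol1}), and your Steps 1--2 simply fill in the standard spectral-theorem argument and the elliptic-regularity bootstrap that those inputs yield. You also correctly flag that the genuine content lives in the two cited lemmas (above all the compactness of $H^1_f \hookrightarrow L^2_f$), which is exactly what the paper is deferring to Stolarski.
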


	\subsection{$U(2)$-symmetric basis of $L^{2,U(2)}_f$}
	Since $\overline{g}$ is symmetric, the following proposition holds.
	\begin{proposition}
		Recall that $U(2)$ acts on $(M, \overline{g})$ by isometries. Then for every $\alpha\in U(2)$ and $h\in \Gamma(M,\text{Sym}^{2}T^{*}M)$ we have
		$$l_\alpha^*((\Delta_{g,f} + 2\overline{Rm})(h))= (\Delta_{g,f} + 2\overline{Rm})(l_\alpha^*h).$$
	\end{proposition}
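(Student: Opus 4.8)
The plan is to treat this as a naturality (equivariance) statement. The operator $\Delta_{g,f}+2\overline{Rm}$ is assembled from the metric $\overline{g}$, its Levi--Civita connection $\overline{\nabla}$, its curvature $\overline{Rm}$, and the potential $f$ using only tensorial operations -- covariant differentiation, tensor products, and metric contractions -- each of which commutes with pullback by a diffeomorphism in the appropriate sense. For a diffeomorphism $l_\alpha$ that is an isometry of $\overline{g}$ and also preserves $f$, these operations therefore commute with $l_\alpha^*$ exactly, which is all that the proposition asserts.

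Concretely, I would first record the elementary naturality facts: for any diffeomorphism $\varphi$ and any tensor $T$ one has $\varphi^*\big(\overline{\nabla}^{\,\overline{g}}T\big)=\overline{\nabla}^{\,\varphi^*\overline{g}}(\varphi^*T)$, $\varphi^*\overline{Rm}_{\overline{g}}=\overline{Rm}_{\varphi^*\overline{g}}$, and $\varphi^*\big(\mathrm{tr}_{\overline{g}}T\big)=\mathrm{tr}_{\varphi^*\overline{g}}(\varphi^*T)$. Specialising to $\varphi=l_\alpha$ with $l_\alpha^*\overline{g}=\overline{g}$, these become the statements that $l_\alpha^*$ commutes with $\overline{\nabla}$, with forming $\overline{Rm}$, and with $\overline{g}$-contractions. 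Applying this term by term to the three summands of $(\Delta_{g,f}+2\overline{Rm})(h)$: the Laplacian term $\Delta_{\overline{g}}h$ is the $\overline{g}$-trace of $\overline{\nabla}^2 h$, and the curvature term $\overline{g}^{kp}\overline{g}^{lq}\overline{R}_{iklj}h_{pq}$ is a $\overline{g}$-contraction of $\overline{Rm}\otimes h$, so both pull back to the same expression formed from $l_\alpha^*h$; the drift term pulls back to $\overline{\nabla}_{l_\alpha^*(\overline{\nabla}f)}(l_\alpha^*h)$, which equals $\overline{\nabla}_{\overline{\nabla}f}(l_\alpha^*h)$ provided $l_\alpha^*(\overline{\nabla}f)=\overline{\nabla}f$. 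Summing the three identities gives the claim.

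Thus the one point that is not pure formalism is the $U(2)$-invariance of $\overline{\nabla}f=\nabla f_{\mathrm{FIK}}$, and I would settle it in one of two ways. Directly: by the construction in \cite{FIK}, $f_{\mathrm{FIK}}$ is a function of the cohomogeneity-one radial coordinate $s$ alone, hence manifestly $U(2)$-invariant, so $l_\alpha^*f=f$ and in particular $l_\alpha^*(\overline{\nabla}f)=\overline{\nabla}f$. Intrinsically: pulling the soliton equation $\mathrm{Ric}_{\overline{g}}+\overline{\nabla}^2 f=\tfrac12\overline{g}$ back by $l_\alpha$ and using $l_\alpha^*\mathrm{Ric}_{\overline{g}}=\mathrm{Ric}_{\overline{g}}$ and $l_\alpha^*\overline{g}=\overline{g}$ yields $\overline{\nabla}^2(l_\alpha^*f-f)=0$, so $l_\alpha^*f-f$ has parallel gradient; since $(\mathbb{CP}^2\setminus\{\mathrm{pt}\},\overline{g})$ admits no isometric splitting off a line (the underlying space $\mathcal{O}(-1)$ retracts onto the compact $\mathbb{CP}^1$ bolt, ruling out a product $N^3\times\mathbb{R}$), the difference is constant and again $l_\alpha^*(\overline{\nabla}f)=\overline{\nabla}f$. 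I do not anticipate a genuine obstacle here; the only place where care is needed is to notice that the $f$-dependent drift term is the single summand whose equivariance is not automatic, and to dispose of it using the invariance of $\nabla f$.
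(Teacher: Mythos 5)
Your argument is correct and fills in what the paper leaves unsaid: the paper's entire justification is the single sentence ``Since $\overline{g}$ is symmetric, the following proposition holds,'' so your systematic naturality argument is essentially the intended one, and you correctly isolate the one step that is not automatic from $l_\alpha^*\overline{g}=\overline{g}$ alone, namely the $U(2)$-invariance of $\overline{\nabla}f$. The direct route you give for that step (that $f_{\mathrm{FIK}}$ depends only on the cohomogeneity-one coordinate $s$ by construction in \cite{FIK}, hence is $U(2)$-invariant) is clean and is the right one to use here. Your alternative ``intrinsic'' route is a bit quick at the final step: deducing from $\overline{\nabla}^2(l_\alpha^*f-f)=0$ that $l_\alpha^*f-f$ is constant via a splitting argument requires a genuine argument that $(\mathbb{CP}^2\setminus\{\mathrm{pt}\},\overline{g})$ admits no parallel vector field, and ``retracts onto the bolt'' does not by itself rule out a de Rham product $N^3\times\mathbb{R}$; one would instead appeal, e.g., to the irreducibility of an asymptotically conical shrinker with non-flat cone. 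Since you present this only as a backup to the direct argument, it does not affect the correctness of the proof.
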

	
	Let $h \in \Gamma(M,\text{Sym}^{2}T^{*}M)$. Put a $U(2)$-symmetric metric on $U(2)$, and let $dVol$ be the corresponding volume form.
	Define
	$$h_{avg}=\frac{1}{Vol(U(2))}\int_{U(2)} l_\alpha^*(h)dVol.$$
	Then $h$ is $U(2)$-symmetric if and only if $h=h_{avg}$. 
	Now take $h \in \Gamma(M,\text{Sym}^{2}T^{*}M)$ such that $(\Delta_{\overline{g}, f} + 2\overline{Rm})(h)=\lambda h$ for $\lambda \in \mathbb{R}$. Then 
	$$(\Delta_{\overline{g},f} + 2\overline{Rm})(h_{avg})= \lambda h_{avg}.$$
	Consider the set $\{ (h_j)_{avg} \}_{j = 1}^\infty \subset L^2_f(M, Sym^2T^*M)$ where the $h_j$ are the eigentensors of Theorem \ref{Spectrum}.
	Suppose $h \in L^2_f(M, Sym^2T^*M)$ be $U(2)$-symmetric. By Theorem \ref{Spectrum}, $h=\sum_{i=1}^{\infty} \beta_i h_i$ for some $\beta_i\in \mathbb{R}$. Thus,
	\begin{align*}
		h&=h_{avg}\\
		&= \frac{1}{Vol(U(2))}\int_{U(2)} l_\alpha^*(h)d\alpha \\
		&= \frac{1}{Vol(U(2))}\int_{U(2)} l_\alpha^*(\sum_{i=1}^{\infty} \beta_i h_i)d\alpha \\
		& = \sum_{i=1}^{\infty} \frac{\beta_i}{Vol(U(2))}\int_{U(2)} l_\alpha^*(h_i)d\alpha \\
		&= \sum_{i=1}^{\infty} \beta_i (h_i)_{avg}.
	\end{align*} 
	
	Therefore $\{ (h_j)_{avg} \}_{j = 1}^\infty \subset L^2_f(M, Sym^2T^*M)$ spans the space $$L^2_f(M, Sym^2T^*M, U(2))\eqqcolon L^{2,U(2)}_f,$$ of $U(2)$-symmetric tensors in $L^2_f(M, Sym^2T^*M)$. We will also denote by $H^{1,G}_f$ the space of $U(2)$-symmetric tensors in $H^1_f$. After taking a subsequence and finding an orthogonal basis on each eigenspace, we can assume $\{ (h_j)_{avg} \}_{j = 1}^\infty$ forms an orthonormal basis of $L^{2,U(2)}_f$. We have proven the following.

	\begin{theorem}\label{U(2)Spectrum}
		There exists a smooth orthonormal basis $\{ h_j \}_{j = 1}^\infty$ of $L^{2, U(2)}_f$
		such that, for all $j \in \mathbb{N}$, $h_j \in D(\Delta_{\overline{g},f}) \cap L^{2, U(2)}_f$ is an eigenmode of $\Delta_{\overline{g},f} + 2\overline{Rm}$ with eigenvalue $\lambda_j \in \mathbb{R}$,
		and the eigenvalues $\{ \lambda_j \}_{j= 1}^\infty$ satisfy $\lambda_1 \ge \lambda_2 \ge \dots$.
		
		Moreover, the eigenvalues $\{ \lambda_j \}_{j = 1}^\infty$ are each of finite-multiplicity, are given by the min-max principle, and tend to $-\infty$ as $j \to \infty$.
		The spectrum $\sigma ( \Delta_{\overline{g},f} + 2 \overline{Rm} )$ of $$\Delta_{\overline{g},f} + 2 \overline{Rm} : D( \Delta_{\overline{g},f}) \cap L^{2, U(2)}_f \to L^{2, U(2)}_f$$ equals $\{\lambda_j \}_{j = 1}^\infty$.
	\end{theorem}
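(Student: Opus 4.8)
The plan is to deduce Theorem \ref{U(2)Spectrum} from Theorem \ref{Spectrum} together with the averaging construction already set up above. The key point is that the self-adjoint operator $\mathcal{L}\coloneqq \Delta_{\overline{g},f}+2\overline{Rm}$ commutes with the $U(2)$-averaging projection, so it restricts to the closed invariant subspace $L^{2,U(2)}_f$ while retaining self-adjointness, semiboundedness above, and compactness of the resolvent; one then applies the same spectral theorem used for Theorem \ref{Spectrum}.

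First I would record that $P\colon L^2_f\to L^2_f$, $Ph\coloneqq h_{avg}=\frac{1}{Vol(U(2))}\int_{U(2)} l_\alpha^*(h)\,dVol$, is a bounded linear operator with $P^2=P$ and $P=P^*$ (using that each $l_\alpha$ is an isometry of $(M,\overline{g})$ preserving $e^{-f}dV_{\overline{g}}$, together with the right-invariance of the chosen metric on $U(2)$), so that $L^{2,U(2)}_f=\operatorname{Im}(P)=\{h:Ph=h\}$ is a closed subspace. Next, the Proposition above says $\mathcal{L}$ commutes with each $l_\alpha^*$ on smooth compactly supported tensors; since $\mathcal{L}$ is the closure of that operator, $\mathcal{L}$ commutes with $P$ on $D(\Delta_{\overline{g},f})$, i.e. $Ph\in D(\Delta_{\overline{g},f})$ and $\mathcal{L}(Ph)=P(\mathcal{L}h)$ for all $h\in D(\Delta_{\overline{g},f})$. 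Consequently $\mathcal{L}$ restricts to a densely defined operator $\mathcal{L}|_{U(2)}$ on $L^{2,U(2)}_f$ with domain $D(\Delta_{\overline{g},f})\cap L^{2,U(2)}_f$, which is self-adjoint (it commutes with the orthogonal projection $P$), bounded above (restriction of a bounded-above quadratic form), and has compact resolvent: for $\mu$ in the resolvent set, $(\mathcal{L}-\mu)^{-1}$ commutes with $P$, so $(\mathcal{L}|_{U(2)}-\mu)^{-1}$ is the restriction of a compact operator to the invariant subspace $L^{2,U(2)}_f$, hence compact. I would then invoke the spectral theorem for self-adjoint, bounded-above operators with compact resolvent exactly as in the proof of Theorem \ref{Spectrum} (\cite[Theorem 2.19]{Stol1}) to obtain an orthonormal basis $\{h_j\}_{j=1}^\infty$ of $L^{2,U(2)}_f$ of eigenmodes of $\mathcal{L}|_{U(2)}$ with real eigenvalues of finite multiplicity, ordered $\lambda_1\ge\lambda_2\ge\cdots$, tending to $-\infty$, characterised by min-max, and with $\sigma(\mathcal{L}|_{U(2)})$ equal to the set of eigenvalues. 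Smoothness of the $h_j$ follows from elliptic regularity applied to $\mathcal{L}h_j=\lambda_j h_j$; an equivalent route, already carried out in the display preceding the theorem, is to average the smooth basis from Theorem \ref{Spectrum}, note that each $(h_j)_{avg}$ is again a smooth eigenmode (possibly zero) with the same eigenvalue, that $\{(h_j)_{avg}\}_{j=1}^\infty$ spans $L^{2,U(2)}_f$, and then run Gram--Schmidt within each finite-dimensional eigenspace $\ker(\mathcal{L}|_{U(2)}-\lambda)$.

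The main obstacle is the step asserting that $U(2)$-averaging commutes with the \emph{extended} self-adjoint operator $\Delta_{\overline{g},f}+2\overline{Rm}$ on its full domain $D(\Delta_{\overline{g},f})$, rather than only on smooth compactly supported tensors where the Proposition is stated. One must check that $P$ preserves $H^1_f$ and respects the distributional definition of $\Delta_{\overline{g},f}h$: this holds because $l_\alpha^*$ is an isometry commuting with $\overline{\nabla}$ and preserving $e^{-f}\,dV_{\overline{g}}$, hence bounded on every $H^m_f$ and intertwining the weighted Laplacian in the sense of distributions, and because these bounds are uniform in $\alpha\in U(2)$, so the Bochner integral defining $Ph$ lands in the correct space and commutes with $\Delta_{\overline{g},f}$. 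Once this is in place, closedness of $L^{2,U(2)}_f$, inheritance of self-adjointness, semiboundedness and compact resolvent, and the final application of the spectral theorem are all routine.
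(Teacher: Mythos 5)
Your proposal is correct and would work, but it is packaged somewhat differently from the paper's argument. The paper does not set up an abstract "restriction of a self-adjoint operator to a reducing subspace, then re-apply the spectral theorem" framework. Instead it argues constructively: it takes the already-constructed orthonormal eigenbasis $\{h_j\}$ from Theorem~\ref{Spectrum}, shows that averaging each eigenmode produces again an eigenmode with the same eigenvalue (possibly the zero tensor), shows that the averaged tensors $\{(h_j)_{avg}\}$ span $L^{2,U(2)}_f$ (by expanding any $U(2)$-symmetric $h$ in the full basis and averaging term by term), and then passes to a subsequence and re-orthogonalises within each finite-dimensional eigenspace to get the desired orthonormal basis. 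Note that the paper sidesteps the issue you flagged about commuting the extended operator with averaging on the full domain $D(\Delta_{\overline{g},f})$: it only uses the commutation statement on smooth eigenmodes, which are smooth by elliptic regularity and lie in $\Gamma(M,\mathrm{Sym}^2T^*M)$, so the Proposition as stated suffices. Your primary route buys something: it directly inherits self-adjointness, semiboundedness, compact resolvent and the min-max characterisation on $L^{2,U(2)}_f$ from general theory, whereas the paper implicitly inherits these properties from the ambient operator without re-deriving them for the restriction. Your secondary route is exactly the paper's argument. Both are valid; yours is arguably cleaner at the level of rigour because the paper's interchange of the infinite sum with the Bochner integral, and the "taking a subsequence and finding an orthogonal basis on each eigenspace" step, are asserted rather than carefully justified, whereas applying the spectral theorem on the closed invariant subspace handles all of this automatically.
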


	\begin{remark}\label{>0}
		By the results \cite[Proposition 2.20, Corollary 2.21]{Stol1} for general shrinkers,
		\begin{equation*}
			\Delta_{\overline{g},f}(Ric(g))+2Rm_{\overline{g}}(Ric(\overline{g}))=Ric(\overline{g}). 
		\end{equation*}
		and
		\begin{equation*}
			\Delta_{\overline{g},f}(\overline{\nabla}^2 f)+2Rm_{\overline{g}}(\overline{\nabla}^{2}f)=0. 
		\end{equation*}
		Since $0 \not= Ric(\overline{g}), \overline{\nabla}_{\overline{g}}^2 f \in H^{1, U(2)}_f$, it follows that $\Delta_{\overline{g},f} + 2\overline{Rm}$ has at least two non-negative eigenvalues.
	\end{remark}

	\section{Ricci flow in $G_{AF, \mathbb{R}^4}$, $G_{AF,\mathbb{CP}^2\setminus\{\mathrm{pt}\}}$}\label{GAFsection}
	As explained in the introduction, in Section \ref{setupsection} a $U(2)$-symmetric version of the Wa{\.z}ewski box argument of \cite{Stol1} will be used to obtain a metric within a special class $G_{AF, \mathbb{CP}^2-\{\mathrm{pt}\}}$ of $U(2)$-symmetric metrics that encounters a finite time singularity modelled on FIK.

	From Subsection \ref{boundaryconditions}, $U(2)$-invariant metrics on $\mathbb{R}^4$ or $\mathbb{CP}^2\setminus\{\text{pt}\}$ take the form
	\begin{equation}\label{a}
		g=ds^2+ b^2(s)\left(\sigma_{1}^2+\sigma_{2}^2\right)+c^2(s)\sigma_{3}^2, \qquad s>0,
	\end{equation}
	with $b,c$ satisfying Conditions \ref{R^4conds} or Conditions \ref{0(-1)cond} respectively. For the rest of the paper, given a metric $g$ of the form (\ref{a}), we define $$u\coloneqq\frac{c}{b}.$$
	Recall the dual frame $X_1, X_2, X_3$ to $\sigma_{1}, \sigma_{2}, \sigma_{3}$ introduced in Subsection \ref{U(2)}. Then the non-zero components of the curvature endomorphism of the metric $g$ with respect to the frame $\partial_{s}=X_0, X_1, X_2, X_3$, are
	\begin{equation} \label{sectcurv}
		\begin{aligned}
			R_{1221}&=b^2\left(4-3u^2-b_{s}^2\right),\\
			R_{1331}&=k_{2332}=u^2\left(u^2-b_{s}c_{s}u^{-1}\right),\\
			R_{0110}&=R_{0220}=-bb_{ss},\\
			R_{0330}&=-cc_{ss},\\
			R_{0123}&=R_{0231}=-\frac{1}{2}R_{0312}=\frac{c_{s}}{c}-\frac{b_{s}}{b}.
		\end{aligned}
	\end{equation}
	By the diffeomorphism invariance of the Ricci flow, if the initial metric is $U(2)$-invariant then it remains so under the Ricci flow. Also, it is easy to see from (\ref{sectcurv}), that the Ricci tensor is diagonal for a metric of the form (\ref{a}). Hence, if we have a Ricci flow $(\mathbb{CP}^2\setminus\{\text{pt}\}, g(t))_{[0,T)}$ with $g(0)$ possessing a $U(2)$-symmetry,
	\begin{equation*}
		g(0)=a^2(r)dr^2+ b^2(r)\left(\sigma_{1}^2+\sigma_{2}^2\right)+c^2(r)\sigma_{3}^2, \qquad r>0,
	\end{equation*}
	then $g(t)$ evolves under the Ricci flow as
	\begin{equation*}
		g(t)=a^2(r,t)dr^2+ b^2(r,t)\left(\sigma_{1}^2+\sigma_{2}^2\right)+c^2(r,t)\sigma_{3}^2, \qquad r>0.
	\end{equation*}
	for some functions $a(r,t),b(r,t),c(r,t)$ now dependent on $t \in [0,T)$.
	Then after re-parametrising using the coordinate $s(r,t)=\int_{0}^{r} a(\bar{r},t) d\bar{r}$, 
	\begin{equation} \label{rf}
		g(t)=ds^2+ b^2(s,t)\left(\sigma_{1}^2+\sigma_{2}^2\right)+c^2(s,t)\sigma_{3}^2, \qquad s>0.
	\end{equation}
	with $b,c$ satisfying Conditions \ref{0(-1)cond}. 
	Under the Ricci flow, the functions $b,c$ in (\ref{rf}) satisfy
	\begin{equation}
		\begin{split}\label{riccifloweqaforbc}
			b_{t}&= \partial_{t} b\rvert_{r}=b_{ss}+\left(\frac{c_{s}}{c}+\frac{b_{s}}{b}\right)b_{s}+2\frac{c^2}{b^3}-\frac{4}{b},\\
			c_{t}&=\partial_{t} c\rvert_{r}=c_{ss}+2\frac{b_{s}c_{s}}{b}-2\frac{c^3}{b^4},
		\end{split}
	\end{equation}
	Throughout the rest of Section \ref{GAFsection}, we will always use the notation $b_{t}= \partial_{t} b\rvert_{r}$ and $c_{t}= \partial_{t} c\rvert_{r}$ for $r$ a fixed in time radial coordinate. 
	
	Definition \ref{G} will say that $G_{AF, \mathbb{CP}^2\setminus\{\mathrm{pt}\}}$ are all those metrics $g$ of the form (\ref{a}) on $\mathbb{CP}^2\setminus \{\mathrm{pt}\}$ which satisfy 
	\begin{enumerate}[i)]
		\item $u \leq 1$,
		\item $b_{s},c_{s}\geq 0$,
		\item $\sup_{p\in \mathbb{CP}^2\setminus\{ \mathrm{pt}\}} s^{2+\epsilon} \left\lvert \mathrm{Rm}_{g} \right\rvert _{g}(p(s)) < \infty, \text{ for some } \epsilon>0$.
	\end{enumerate}
	This definition is motivated by the work of Di Giovanni \cite{FraNUT}, where it is shown that if the Ricci flow $g(t)$ is started at a metric $g$ of the form ($\ref{a})$ on $\mathbb{R}^4$ such that 
	\begin{enumerate}[i)]
		\item $u \leq 1$,
		\item $b_{s},c_{s}\geq 0$,
		\item $\sup_{p\in \mathbb{R}^4} (d_{g}(0,p))^{2+\epsilon} \left\lvert \mathrm{Rm}_{g} \right\rvert _{g}(p) < \infty, \text{ for some } \epsilon>0$,
	\end{enumerate}
	then $g(t)$ converges to a member of the Taub-NUT family in the pointed Cheeger-Gromov sense as $t \rightarrow \infty$. Definition \ref{GAF} will say that the class of metrics satisfying these conditions will be called $G_{AF, \mathbb{R}^4}$. 
	The main result of this section will be 
	\begin{theorem}\label{curvblowup1}
		Let $(\mathbb{CP}^2\setminus\{ \mathrm{pt}\},g(t))$ for $[0,T)$ be a complete Ricci flow with $g(0) \in G_{AF, \mathbb{CP}^2\setminus\{ \mathrm{pt}\}}$ with a finite time singularity at $t=T$ and the curvature blowing up at the bolt. Then there exists an $0\leq R<\infty$ (wrt to $g(0)$) 
		$$\{p\in \mathbb{CP}^2\setminus\{ \mathrm{pt}\} : \sup_{[0,T)}\left\lvert Rm_{g(t)} \right\rvert_{g(t)}(p)=\infty\}= \{p\in \mathbb{CP}^2\setminus\{ \mathrm{pt}\} : d_{g(0)}(bolt,p)\in I_R\},$$
		where $I_R$ is the either the interval $[0,R]$ or $[0,R)$.
	\end{theorem}
	
	Theorem \ref{curvblowup1} will be a consequence of the monotonicity of $c(s)$ and the following result which we will prove.
	\begin{proposition}
		Let $(\mathbb{CP}^2\setminus\{\mathrm{pt}\},g(t))$ for $[0,T]$ be a complete Ricci flow with $g(0) \in G_{AF, \mathbb{CP}^2-\{ \mathrm{pt}\}}$. Then there exists $C>0$ such that 
		\begin{equation} \label{cest}
			c^2(s)\left\lvert \mathrm{Rm}_{g(t)} \right\rvert_{g(t)} (s) \leq C.
		\end{equation}
		all $t \in [0,T]$ and $s \geq 0$.
	\end{proposition}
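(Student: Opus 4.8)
The plan is to use the $U(2)$-reduction. Since $g(t)$ is $U(2)$-symmetric it is determined by the profiles $b(\cdot,t),c(\cdot,t)$, and by (\ref{sectcurv}) its full curvature tensor is built from finitely many scalars in $b,c,b_s,c_s,b_{ss},c_{ss}$ --- concretely, the sectional curvatures $K_{01}=K_{02}=-b_{ss}/b$, $K_{03}=-c_{ss}/c$, $K_{12}=(4-3u^2-b_s^2)/b^2$, the base--fibre curvature $K_{13}=K_{23}$ coming from $R_{1331}$, and the twist curvature coming from $R_{0123}=\frac{c_s}{c}-\frac{b_s}{b}=\frac{u_s}{u}$. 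As these are the only nonzero components, $|\mathrm{Rm}_{g(t)}|_{g(t)}$ is comparable to the maximum of their absolute values, so it suffices to bound $c^2$ times each of them, uniformly on $M\times[0,T]$; note that multiplying by $c^2$ renders each of these scale-invariant. Using $u=c/b\le 1$ and $b_s,c_s\ge0$, both preserved by the flow, the quantities $c^2K_{12}=u^2(4-3u^2-b_s^2)$, $c^2K_{13}$ and $c^2$ times the squared twist reduce, modulo polynomial expressions in $u\le1$, to expressions in $b_s$ and $c_s$ alone. Thus the Proposition reduces to two points: (i) a uniform-in-$T$ bound $b_s,c_s\le C$; and (ii) uniform-in-$T$ bounds on the scale-invariant second-order quantities $P_1:=c^2b_{ss}/b$ and $P_2:=c\,c_{ss}$, i.e. on $c^2K_{01}$ and $c^2K_{03}$.

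First I would prove (i). Differentiating the system (\ref{riccifloweqaforbc}) in $s$ and using the commutator $[\partial_t|_r,\partial_s]=\mathrm{Ric}(\partial_s,\partial_s)\,\partial_s$ to return to $\partial_t|_r$ yields reaction--diffusion equations for $b_s$ and $c_s$; one checks their zeroth-order coefficients are either favourably signed or bounded in terms of $u$ and $c/b$, which are already controlled. The boundary data are under control: at the bolt $s=0$ one has $b_s\to0$ and $c_s\to\kappa$, with $\kappa$ a fixed constant determined by smooth closure on $\mathcal{O}(-1)$, both independent of $t$; and at spatial infinity, condition iii) in the definition of $G_{AF,\mathbb{CP}^2\setminus\{\mathrm{pt}\}}$ together with the pseudolocality theorem \cite{pseudolocality} keeps the flow curvature-decaying near infinity uniformly for $t\in[0,T]$, so $b_s,c_s$ stay bounded there. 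The maximum principle on $M\times[0,T]$ then gives (i) with a constant depending only on $g(0)$; combined with the reductions above this also bounds $c^2K_{12}$, $c^2K_{13}$ and the twist term.

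The main obstacle is (ii). The naive route --- a maximum-principle argument for $c^4|\mathrm{Rm}|^2$ --- does not close: using $\partial_t|_r(c^2)=-2c^2(K_{03}+2K_{13})$ one sees the weight $c^4$ contributes negative terms to the evolution of $c^4|\mathrm{Rm}|^2$, but the reaction term is of the form (controlled quantity)$\times|\mathrm{Rm}|$, and $|\mathrm{Rm}|$ is not a priori bounded --- it blows up at $T$ --- so these negative terms dominate the reaction only under an a priori bound on $c^2|\mathrm{Rm}|$, which is exactly what is to be proved. Instead I would compute the reaction--diffusion equations satisfied by $P_1$ and $P_2$ themselves (equivalently by the scale-invariant curvatures $c^2K_{01}$, $c^2K_{03}$) and show that, once (i) holds and $u\le1$, the reaction terms are quadratic in $(P_1,P_2)$ with coefficients that are favourably signed or bounded, so that a maximum-principle/Gr\"onwall argument on $[0,T]$ closes --- the boundary data again being controlled at the bolt, where smoothness of $g(t)$ for each $t<T$ forces $c^2|\mathrm{Rm}|\to0$, and at infinity by pseudolocality. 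Isolating the dangerous products in these evolution equations and exhibiting the cancellations that beat the unbounded factor $|\mathrm{Rm}|$ --- for which the precise form of (\ref{riccifloweqaforbc}) and the two-dimensional reduction are indispensable --- is the crux. Should this structure prove intractable, a fallback is a blow-up contradiction: if $\sup c^2|\mathrm{Rm}|$ is unbounded, take near-maximisers $(p_j,t_j)$, which cannot escape to infinity since $c^2|\mathrm{Rm}|\to0$ there by (i), rescale by $|\mathrm{Rm}|(p_j,t_j)$, and extract via Perelman's $\kappa$-noncollapsing and Hamilton's compactness a complete nonflat ancient limit in which both the Hopf fibre and the base $S^2$ have decompactified and the twist has vanished --- a cohomogeneity-one ancient $\kappa$-solution with flat $\mathbb{R}^3$ principal orbits, ruled out by a Liouville-type rigidity for such warped products. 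In either route the essential difficulty is the same: overcoming the unbounded curvature factor in the reaction by extracting hidden structure from the second-order curvature evolution, respectively from the geometry of the blow-up limit.
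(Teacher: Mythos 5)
Your high-level decomposition matches the paper's: both split the estimate into (i) uniform bounds on $b_s, c_s$, then (ii) bounds on the ``second-order'' sectional curvature quantities $c\,b_{ss}$ and $c\,c_{ss}$, with the remaining components of $\mathrm{Rm}$ following algebraically from $u\le 1$, $b_s,c_s\ge 0$, and (i). Your identification of the boundary behaviour at the bolt and the use of Lemma \ref{asmptoticform} / curvature-decay preservation to control the spatial ends is also in the spirit of the paper (though the paper invokes Theorem \ref{curvdecaypre} rather than pseudolocality directly for the ends). For (i), the paper's argument is a maximum principle applied directly to the evolution equations (\ref{b_sevolution}), (\ref{c_sevolution}) with a discriminant computation showing that at a putative first interior maximum $b_s=\overline{\alpha}>2$ (resp.\ $c_s=\overline{\alpha}>2$) the reaction term is strictly negative; your sketch gestures at this but does not exhibit the sign structure. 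This is a detail gap rather than an idea gap.

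The genuine gap is in step (ii). You propose writing the evolution equations for $P_1 = c^2 b_{ss}/b$ and $P_2 = c\,c_{ss}$ directly and hoping that ``the reaction terms are quadratic in $(P_1,P_2)$ with coefficients that are favourably signed or bounded.'' The paper's computation of $\partial_t(c\,b_{ss})$ shows this is \emph{not} the case: the raw evolution of $c\,b_{ss}$ contains terms such as $\frac{4c^2 c_{ss}}{b^3}$, $-\frac{2c_s c_{ss} b_s}{c}$, $\frac{c b_{ss}}{b^2}$ that cannot be absorbed by a maximum principle applied to $c\,b_{ss}$ alone, because they involve $b_{ss}, c_{ss}$ linearly and there is no a priori quadratic-in-$b_{ss}$ good term to dominate them. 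The essential trick the paper uses, which your proposal omits, is to instead estimate the \emph{modified} quantities $f_{\pm} = c\,b_{ss} \mp \mu b_s^2 \mp \nu c_s^2$ (and the analogue with $c\,c_{ss}$) for large constants $\mu,\nu$: the $\mp\mu b_s^2$ and $\mp\nu c_s^2$ corrections generate favourable quadratic terms $2(\mu\mp u)b_{ss}^2$ and $2\nu c_{ss}^2$ in the evolution equation (\ref{f-evolution}), which, after a sequence of Young's-inequality absorptions using the already-established bounds $u\le 1$, $b_s,c_s\le C$, dominate all the problematic linear-in-$(b_{ss},c_{ss})$ reaction terms. Without this comparison-function modification the maximum principle for $P_1, P_2$ does not close, as you yourself half-suspect when you flag this step as ``the crux.'' Your fallback via a blow-up contradiction is speculative: Perelman's $\kappa$-noncollapsing is not available on a noncompact $\mathbb{CP}^2\setminus\{\mathrm{pt}\}$ without further argument, and the ``Liouville-type rigidity for cohomogeneity-one warped products with flat $\mathbb{R}^3$ orbits'' that you invoke to rule out the limit is asserted but neither cited nor proved. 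So as written the proposal does not close the argument at its central step.
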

	
	This section will finish with the statement of Theorem \ref{fullthm2}, to be proved in Section \ref{final}, which is a stronger result than Theorem \ref{fullthm} which says that there is Ricci flow $G(t)$ on $\mathbb{CP}^2-\{ \mathrm{pt}\}$ such that $g(t)\in G_{AF, \mathbb{CP}^2\setminus\{\mathrm{pt}\}}$, it encounters a finite time singularity modelled on FIK. Then surgery can be performed to yield a metric on $\mathbb{R}^4$ of the form (\ref{a}) with the properties above. Then the Ricci flow evolves this metric to a member of the Taub-NUT member. Moreover, the member is determined up to action of diffeomorphisms. Indeed, we will see that metrics in $G_{AF, \mathbb{CP}^2\setminus\{\mathrm{pt}\}}$ and $G_{AF, \mathbb{R}^4}$ can be categorised as follows.
	\begin{lemma}
		Let $g \in G_{AF, \mathbb{R}^4}$ or $g\in G_{AF, \mathbb{CP}^2\setminus\{\mathrm{pt}\}}$. Then one of the following holds:
		\begin{enumerate}[i)]
			\item $b_{s} \rightarrow 2, c_{s} \rightarrow 0, c \rightarrow m_{g}^{-1} \in (0, \infty)$ as $s\rightarrow \infty$,
			\item $b_{s} \rightarrow 1, c_{s} \rightarrow 1$ as $s\rightarrow \infty.$
		\end{enumerate}
	\end{lemma}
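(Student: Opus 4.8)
The plan is to work entirely with the profile functions $b(s),c(s)$ of the reduced metric (\ref{a}) and to analyse their behaviour as $s\to\infty$; the two cases $g\in G_{AF,\mathbb{R}^4}$ and $g\in G_{AF,\mathbb{CP}^2\setminus\{\mathrm{pt}\}}$ are handled identically, since the defining conditions differ only at $s=0$ (nut versus bolt) while the assertion of the lemma concerns the asymptotics. Write $u=c/b$; the standing hypotheses are $u\le1$, $b_s\ge0$, $c_s\ge0$ and $|\mathrm{Rm}_g|\le Cs^{-2-\epsilon}$, and from (\ref{sectcurv}) the sectional curvatures of such a metric are $K_{01}=K_{02}=-b_{ss}/b$, $K_{03}=-c_{ss}/c$, $K_{12}=b^{-2}(4-3u^2-b_s^2)$ and $K_{13}=K_{23}=b^{-2}(u^2-b_sc_su^{-1})$.

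First I would carry out the elementary reductions. By monotonicity $b\nearrow b_\infty$ and $c\nearrow c_\infty$ in $(0,\infty]$; if $b_\infty<\infty$ then $|b_{ss}|=|K_{01}|b\le Cb_\infty s^{-2-\epsilon}$ forces $b_s\to0$, whence $K_{12}\to(4-3u_\infty^2)b_\infty^{-2}>0$ (as $u_\infty\le1$), contradicting $K_{12}\to0$; so $b\to\infty$. A bootstrap using only $b_s\ge0$ and $|b_{ss}|\le Cbs^{-2-\epsilon}$ — fix $S_0$ with $CS_0^{-\epsilon}/\epsilon\le\frac{1}{2}$, estimate $b(t)\le b(S_0)+t\sup_{[S_0,s]}b_s$ inside $\int_{S_0}^s|b_{ss}|\le C\int_{S_0}^s b(t)t^{-2-\epsilon}\,dt$, and absorb the sup appearing on both sides — then gives $\sup_{[S_0,\infty)}b_s<\infty$, i.e.\ $b\le C_1s$. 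In particular $s^2|\mathrm{Rm}|\to0$, so $K_{12}b^2\to0$ and $K_{13}b^2\to0$; moreover $\int^\infty|b_{ss}|$ and $\int^\infty|c_{ss}|$ are finite (for the latter using $c\le b\le C_1s$), so $b_s\to A$ and $c_s\to D$ for some $A,D\ge0$, hence $b/s\to A$, $c/s\to D$, and so $u\to u_\infty:=D/A$ (taken to be $0$ if $D=0$). Also $A>0$: otherwise $b_s\to0$ and, along a sequence realising $\underline u=\liminf u\le1$, $K_{12}b^2=4-3u^2-b_s^2\to4-3\underline u^2\ge1$, contradicting $K_{12}b^2\to0$. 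Letting $s\to\infty$ in $K_{12}b^2\to0$ and $K_{13}b^2\to0$ now yields the two identities $A^2+3u_\infty^2=4$ and, when $u_\infty>0$, $u_\infty^3=AD$.

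If $u_\infty>0$, combining $u_\infty^3=AD$ with $u_\infty=D/A$ forces $D=A^2$, hence $u_\infty=A$ and $A^2+3A^2=4$, so $A=D=u_\infty=1$: this is case (ii). There remains the case $u_\infty=0$, where the first identity gives $A=2$ immediately (so $b_s\to2$, $c_s\to0$) and the real work is to prove $c_\infty<\infty$. For this I would pass to derivatives: $K_{13}b^2=u^2-b_sc_s/u$ gives $c_s=u(u^2-K_{13}b^2)/b_s$, so $c_s-ub_s=(u/b_s)(u^2-b_s^2-K_{13}b^2)$, and inserting $b_s^2=4-3u^2-K_{12}b^2$ turns this into $c_s-ub_s=(u/b_s)\big(4u^2-4+(K_{12}-K_{13})b^2\big)=-2u(1+o(1))$, because $u\to0$, $b_s\to2$ and $(K_{12}-K_{13})b^2\to0$. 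Hence $u_s=b^{-1}(c_s-ub_s)=-(1+o(1))u/s$ (using $b\sim2s$), so integrating the logarithmic derivative gives $u(s)=s^{-1+o(1)}$ and therefore $c=ub\le C_1s\cdot s^{-1+o(1)}=s^{o(1)}$. Plugging this back into $0\le c_s\le u(u^2+|K_{13}|b^2)/b_s\le C(c^3s^{-3}+cs^{-1-\epsilon})$ (for $s$ large, using $u\le c/s$ and $|K_{13}|b^2\le Cs^{-\epsilon}$) gives $c_s/c\le C(c^2s^{-3}+s^{-1-\epsilon})$, whose right side is integrable at infinity since $c=s^{o(1)}$; thus $\log c$ converges and $c\to m_g^{-1}\in(0,\infty)$, which is case (i).

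The main obstacle is this final case $u_\infty=0$. Unlike case (ii), the algebraic identities only pin down $A=2$, and one is forced to go one order deeper: it is precisely the interplay of $K_{12}$ and $K_{13}$ (equivalently, of the relations $4-3u^2-b_s^2\to0$ and $u^2-b_sc_s/u\to0$ together with their decay rates) that produces the asymptotic ODE $u_s\sim-u/s$, and only then can the integrability bootstrap close and force $c$ to be bounded. The remaining ingredients — monotonicity, the linear bound $b\le C_1s$, and the identification of $A$, $D$, $u_\infty$ — are routine once the quadratic curvature decay is brought in.
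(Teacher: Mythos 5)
Your proof is correct. Note that the paper itself does not supply a proof of this lemma: it simply refers to the analogous Lemma~2.4 of the cited reference on $\mathbb{R}^4$ (Di Giovanni's paper on Taub--NUT stability) and remarks that the argument there depends only on the behaviour of the metric at infinity and so applies verbatim to $\mathbb{CP}^2\setminus\{\mathrm{pt}\}$. Since that reference is not reproduced here, I cannot say whether your route coincides with the one used there, but your argument stands on its own. The overall structure is natural: monotonicity of $b,c$; the decay $|\mathrm{Rm}|\le Cs^{-2-\epsilon}$ forcing $b\to\infty$ and then, by the bootstrap absorbing $\sup b_s$, a linear bound $b\le C_1 s$; integrability of $b_{ss},c_{ss}$ giving the limits $A=\lim b_s$, $D=\lim c_s$ and hence $u_\infty=D/A$ once $A>0$ is ruled in; the two algebraic relations $A^2+3u_\infty^2=4$ and $u_\infty^3=AD$ from $K_{12}b^2\to 0$ and $K_{13}b^2\to 0$, which immediately force $A=D=u_\infty=1$ when $u_\infty>0$; and finally, in the degenerate case $u_\infty=0$, the derivation of the asymptotic ODE $u_s=-(1+o(1))u/s$ from combining the two curvature identities, which yields $u=s^{-1+o(1)}$, hence $c=s^{o(1)}$, and then an integrability bootstrap for $c_s/c$ that forces $c$ to converge to a finite positive limit. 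I checked the curvature formula $K_{13}b^2=u^2-b_sc_s/u$ against the standard Berger--sphere/warped-product expressions and it is the correct one; the display in the paper's Lemma~\ref{k_12} (which writes $c^2K_{13}=c^4/b^4-b_sc_s$ rather than $c^4/b^4-ub_sc_s$) appears to carry a benign typo that does not affect the estimate there, but your version is the right one for the delicate cancellation you exploit in the $u_\infty=0$ case. One small point worth making explicit when writing this up: the step $\log u(s)=-(1+o(1))\log s$ requires splitting $\int_{S_0}^s\eta(t)t^{-1}\,dt$ into a fixed compact piece plus a tail controlled by $\sup_{[S_1,\infty)}|\eta|\cdot\log(s/S_1)$, so that the $o(1)$ is uniform in the sense that for every $\delta>0$ one has $C'(\delta)s^{-1-\delta}\le u(s)\le C''(\delta)s^{-1+\delta}$; you only need the upper bound, and it is exactly what feeds into the integrability of $c^2s^{-3}$.
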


	We call the $m_g$ the mass of $g$, and it preserved under the Ricci flow. This implies that, since the surgery process will be performed inside a compact of $G_{AF, \mathbb{CP}^2\setminus\{\mathrm{pt}\}}$, the mass of the Taub-NUT metric that the Ricci flow with surgery of Theorem \ref{fullthm} converges to will be equal to the mass of the initial metric of the flow.

	\subsection{Stability of Taub-NUT}\label{future}
	In this section the stability of the Taub-NUT family of metrics within a certain class $G_{AF, \mathbb{R}^4}$ of metrics is discussed. 
	If we write members of the Taub-NUT family of \ref{TN} as 
	\begin{equation} 
		g_{\mathrm{NUT},n}=ds^2+ b^2(s)(\sigma_{1}^2+\sigma_{2}^2)+c^2(s)\sigma_{3}^2, \qquad s\geq0.
	\end{equation}
	for some smooth functions $b(s),c(s)$ depending on $n$ and satisfying Condition \ref{R^4conds}, then $b_{s}=2-u$ and $c_{s}=u^2$, as stated in \cite{FraNUT}. Also $\rvert Rm \lvert$ falls off like $\frac{1}{r^3}$, $b_{s}>0, c_{s}>0$ and $u\leq 1$. Thus, all members of the Taub-NUT family fall into the following class of metrics on $\mathbb{R}^4$, where $0$ denotes the origin in $\mathbb{R}^4$.
	\begin{definition}[\cite{FraNUT} Definition 2.3] \label{GAF}
		Define $G_{AF, \mathbb{R}^4}$ to be the class of all complete metrics $g$ of the form (\ref{eq:1}) on $\mathbb{R}^4$ such that
		\begin{enumerate}[i)]
			\item $u \leq 1$,
			\item $b_{s},c_{s}\geq 0$,
			\item $\sup_{p\in \mathbb{R}^4} (d_{g}(0,p))^{2+\epsilon} \left\lvert Rm_{g} \right\rvert _{g}(p) < \infty, \text{ for some } \epsilon>0$.
		\end{enumerate}
	\end{definition}

	It turns out that metrics in $G_{AF, \mathbb{R}^4}$ split into two categories:
	
	\begin{lemma}[\cite{FraNUT} Lemma 2.4]\label{masswelldfiendforFra}
		Let $g \in G_{AF, \mathbb{R}^4}$. Then one of the following holds:
		\begin{enumerate}[i)]
			\item $b_{s} \rightarrow 2, c_{s} \rightarrow 0, c \rightarrow m_{g}^{-1} \in (0, \infty)$ as $s\rightarrow \infty$,
			\item $b_{s} \rightarrow 1, c_{s} \rightarrow 1$ as $s\rightarrow \infty.$
		\end{enumerate}
	\end{lemma}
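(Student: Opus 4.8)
The plan is to read off the asymptotics of $b$ and $c$ from the explicit curvature formulas (\ref{sectcurv}) together with the three defining properties of $G_{AF,\mathbb{R}^4}$. For such a metric the radial geodesic from the origin is minimising, so the curvature‑decay condition in Definition \ref{GAF} gives $|\mathrm{Rm}_{g}|_{g}(s)\le Cs^{-2-\epsilon}$. Since $|\mathrm{Rm}_{g}|_{g}$ dominates every sectional curvature, and the sectional curvatures of a metric of the form (\ref{eq:1}) are (from (\ref{sectcurv}), and as one checks against flat $\mathbb{R}^4$ and against Taub-NUT using $b_s=2-u$, $c_s=u^2$) among $-b_{ss}/b$, $-c_{ss}/c$, $(4-3u^2-b_s^2)/b^2$ and $(u^4-ub_sc_s)/c^2$, we obtain the bounds $|4-3u^2-b_s^2|\le Cb^2s^{-2-\epsilon}$, $|b_{ss}|\le Cbs^{-2-\epsilon}$, $|c_{ss}|\le Ccs^{-2-\epsilon}$ and $|u^4-ub_sc_s|\le Cc^2s^{-2-\epsilon}$. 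Throughout, $b$ and $c$ are increasing, $b(0)=c(0)=0$, $b_s(0)=c_s(0)=1$, and $c\le b$ since $u\le1$.

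First I would prove $b\to\infty$. If instead $b\to b_\infty<\infty$, then $c\le b$ is also bounded, $|b_{ss}|\le Cb_\infty s^{-2-\epsilon}$ is integrable so $b_s$ has a limit, and since $b_s\ge0$ with $\int_0^\infty b_s\,ds=b_\infty<\infty$ that limit is $0$; but then $4-3u^2-b_s^2\to0$ (using boundedness of $b$) forces $u^2\to4/3$, contradicting $u\le1$. Next I would control the growth rate of $b$: from $b_s^2\le4+Cb^2s^{-2-\epsilon}$, hence $b_s\le2+\sqrt{C}\,b\,s^{-1-\epsilon/2}$, a Gr{\"o}nwall argument using $\int^\infty s^{-1-\epsilon/2}\,ds<\infty$ yields $b\le C's$ for large $s$. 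With this linear bound in hand, $|b_{ss}|$ and $|c_{ss}|$ are both $\le C''s^{-1-\epsilon}$, hence integrable, so $b_s\to L_b$ and $c_s\to L_c$ for some reals $L_b\ge L_c\ge0$ (the inequality because $c\le b\to\infty$).

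Then I would split according to whether $c$ is bounded. If $c$ is bounded it has a positive limit, which I set equal to $m_g^{-1}$; then $\int_0^\infty c_s\,ds=m_g^{-1}<\infty$ together with $c_s\to L_c$ forces $L_c=0$, and $b\to\infty$ then gives $u\to0$, so $4-3u^2-b_s^2\to0$ yields $b_s\to2$ --- this is case i). If $c$ is unbounded, I first rule out $L_c=0$: if $c_s\to0$ then writing $c_s(s)=-\int_s^{\infty}c_{ss}(t)\,dt$ and using $|c_{ss}|\le Cc\,s^{-2-\epsilon}$, a finite bootstrap (a bound $c\le As^\alpha$ improves to $c\le A's^{\alpha-\epsilon}$, iterated finitely often) would force $c$ bounded, a contradiction; hence $L_c>0$, so $c\sim L_cs$, $b\sim L_bs$, and $u\to U:=L_c/L_b\in(0,1]$. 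Now $4-3u^2-b_s^2\to0$ gives $L_b^2=4-3U^2$, while $|u^4-ub_sc_s|\le Cc^2s^{-2-\epsilon}$ with $c\sim L_cs$ gives $u^4-ub_sc_s\to0$, i.e. $U^4=UL_bL_c$; substituting $L_c=UL_b$ yields $U=L_b$, hence $L_c=L_b^2$, and then $L_b^2=4-3L_b^2$ gives $L_b=L_c=1$, so $b_s\to1$, $c_s\to1$ --- this is case ii).

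I expect the main obstacle to be the step ruling out intermediate growth of $c$: a priori $c$ could grow sublinearly (say like $\log s$ or $s^{1/2}$) with $c_s\to0$, and this lands us in neither case (it forces $u\to0$ and $b_s\to2$ but with $c\not\to$ a finite limit); the bootstrap driven by $|c_{ss}|\le Cc\,s^{-2-\epsilon}$ is exactly what excludes it. The remaining ingredients are a sequence of elementary ODE estimates built on (\ref{sectcurv}); the only points requiring a little care are that each sectional‑curvature expression above is genuinely dominated by $|\mathrm{Rm}_{g}|_{g}$ and that the radial distance to the origin coincides with the arclength parameter $s$.
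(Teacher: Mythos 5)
The paper only cites this lemma from \cite{FraNUT} and gives no proof of its own; the remark in Section~\ref{GAFpre} that the proof of \cite[Lemma~2.4]{FraNUT} ``only relies on the properties of the metric at infinity'' is the only description given, so a line-by-line comparison is not possible. That said, your argument is itself purely asymptotic, consistent with that description, and I found no gap in it. The logical chain --- boundedness of $b$ is ruled out via $u^2\to 4/3>1$; the Gr\"onwall estimate from $b_s^2\le 4+Cb^2s^{-2-\epsilon}$ forces $b$ to grow at most linearly; integrability of $b_{ss}$ and $c_{ss}$ then gives limits $L_b$ and $L_c$ for $b_s$ and $c_s$; the dichotomy on $c$ bounded versus unbounded is resolved by the bootstrap in $c$; and the two algebraic constraints from $K_{12}\to 0$ and $K_{13}\to 0$ pin down $L_b=L_c=1$ in the unbounded case --- is a complete elementary ODE proof, and it correctly uses only $u\le 1$, $b_s,c_s\ge 0$, and the decay of $\lvert Rm\rvert$, together with the fact that $s$ is the radial distance.

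One remark worth flagging: the expression for $c^2 K_{13}$ recorded in the proof of Lemma~\ref{k_12} of the paper, namely $\frac{c^4}{b^4}-b_sc_s$, is not consistent with the formula for $R_{1331}$ in (\ref{sectcurv}) nor with Ricci-flatness of Taub-NUT (substituting $b_s=2-u$, $c_s=u^2$ into that expression gives a nonzero $R_{33}$). The version you use, $K_{13}=(u^4-ub_sc_s)/c^2$, is the one that passes those checks. Your final algebra ($U^3=L_bL_c$ with $L_c=UL_b$ and $L_b^2=4-3U^2$) gives $L_b=L_c=U=1$ from either expression, so the statement is unaffected, but it is worth knowing which is correct.
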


	\begin{definition}
		For a metric $g\in G_{AF, \mathbb{R}^4}$, we define the mass of $g$ to be $m_{g}=\lim_{s \rightarrow \infty}\frac{1}{c(s)}$, where in the case of ii) of Lemma \ref{masswelldfiendforFra}, $m_g\coloneqq0$.
	\end{definition}
	
	\begin{lemma}[\cite{FraNUT}] \label{GR4preserved}
		Let $(\mathbb{R}^4, g(t))_{0\leq t<T}$ be a solution to the  Ricci flow with $g(0)\in G_{AF, \mathbb{R}^4}$. Then $g(t) \in G_{AF, \mathbb{R}^4}$ for all $t\in[0,T)$.
	\end{lemma}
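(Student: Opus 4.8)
The plan is to verify that each of the three conditions defining $G_{AF,\mathbb{R}^4}$ in Definition \ref{GAF} — (i) $u\le 1$, (ii) $b_s,c_s\ge 0$, and (iii) the decay bound on $d_g(0,\cdot)^{2+\epsilon}|Rm_g|_g$ — is preserved along the flow. The $U(2)$-symmetry of $g(0)$ is preserved under Ricci flow, so after the time-dependent reparametrisation putting $g(t)$ into the form \eqref{rf}, the evolution reduces to the $(1+1)$-dimensional system \eqref{riccifloweqaforbc} for $b(s,t),c(s,t)$ (here $\partial_t=\partial_t|_r$, and in the $(r,t)$ variables \eqref{riccifloweqaforbc} and all equations derived from it are parabolic). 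Since $g(0)\in G_{AF,\mathbb{R}^4}$ has globally bounded curvature — by (iii) near infinity and smoothness on $\mathbb{R}^4$ near the nut — Shi's theorem gives a flow with bounded curvature on a short interval $[0,\tau]$; one argues on such intervals and passes to all of $[0,T)$ by a continuity argument. Conditions (i) and (ii) will follow from maximum-principle arguments; (iii), the delicate point, will use Perelman's pseudolocality.

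For (i): a direct computation from \eqref{riccifloweqaforbc} gives, with $u=c/b$,
\[
\partial_t u \;=\; u_{ss} + 3\,\frac{b_s}{b}\,u_s + \frac{4u}{b^2}\bigl(1-u^2\bigr),
\]
so that $\varphi := 1-u$ solves $\partial_t\varphi = \varphi_{ss} + 3\tfrac{b_s}{b}\varphi_s - \tfrac{4u(1+u)}{b^2}\,\varphi$, a parabolic equation whose zeroth-order coefficient $-\tfrac{4u(1+u)}{b^2}$ is $\le 0$ (as $u=c/b>0$). At the nut $s=0$ one has $\varphi(0,t)\equiv 0$ — forced by the smoothness of $g(t)$ on $\mathbb{R}^4$ (Condition \ref{R^4conds}), which makes $c/b$ extend to a smooth function with value $1$ at the nut. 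The coefficient $\tfrac{4u(1+u)}{b^2}$ is singular at the nut, but it always multiplies $\varphi$, which vanishes there; in particular, at a putative first interior point and time where $\varphi$ reaches $0$ from above, the spatial derivative vanishes, the zeroth-order term vanishes, and $\partial_t\varphi=\varphi_{ss}\ge 0$, a contradiction. Together with the short-time curvature bound (to handle infinity) this gives $\varphi\ge 0$, i.e. $u\le 1$.

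For (ii): differentiating \eqref{riccifloweqaforbc} in $s$ — the commutator $[\partial_t,\partial_s]$ is a multiple of $\partial_s$, so only contributes drift and zeroth-order terms that carry a factor of $b_s$ or $c_s$ — yields a coupled parabolic system for $(b_s,c_s)$ whose only off-diagonal coupling terms are $\tfrac{4cc_s}{b^3}$ in the $b_s$-equation and $\tfrac{8c^3 b_s}{b^5}$ in the $c_s$-equation, both nonnegative once $b,c,b_s,c_s\ge 0$. Hence the system is quasi-monotone (cooperative), and the closed convex cone $\{b_s\ge 0,\ c_s\ge 0\}$ is invariant under the vector maximum principle: on the face $\{b_s=0\}$ the reaction vector has first component $\tfrac{4cc_s}{b^3}\ge 0$, and on $\{c_s=0\}$ its second component is $\tfrac{8c^3 b_s}{b^5}\ge 0$, so the reaction points into the cone. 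At the nut $b_s(0,t)=c_s(0,t)=1>0$ (again forced by smoothness on $\mathbb{R}^4$), and the short-time curvature bound prevents the infimum escaping to infinity, so $b_s,c_s\ge 0$ persists.

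Finally, (iii) — the main obstacle. By Lemma \ref{masswelldfiendforFra}, a metric in $G_{AF,\mathbb{R}^4}$ is asymptotically flat, ALF in case (i) and asymptotically Euclidean in case (ii); so for every $\delta>0$ there is $\rho_\delta$ such that for $d:=d_{g(0)}(0,p)\ge\rho_\delta$ the ball $B_{g(0)}(p,d/2)$ is $C^k$-close (with near-Euclidean isoperimetric constant) to the corresponding region of the flat model, and $|Rm_{g(0)}|\le C d^{-2-\epsilon}$ there. Perelman's pseudolocality then provides a uniform $\tau_0>0$ on which the flow near infinity stays close to the (ALF or flat) model, and Shi's local estimates control all derivatives of $Rm_{g(t)}$ there. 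Linearising the system \eqref{riccifloweqaforbc} about the model near infinity, the evolution becomes a lower-order perturbation of the flat heat equation, for which parabolic/heat-kernel estimates propagate the polynomial decay of the data; one thereby obtains $|Rm_{g(t)}|\le C' d^{-2-\epsilon'}$ for some $\epsilon'>0$ on $[0,\tau_0]$ (the precise rate may decrease, but any positive exponent suffices for membership in $G_{AF,\mathbb{R}^4}$), while on the complementary compact region the curvature stays bounded by standard short-time estimates. Iterating — the step $\tau_0$ being uniform given the bounds from (i), (ii) and the short-time curvature control — yields (iii) on all of $[0,T)$, completing the proof that $g(t)\in G_{AF,\mathbb{R}^4}$. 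The reason (iii) is the genuine obstruction, rather than a routine barrier argument, is that on an asymptotically flat background the function $d^{-2-\epsilon}$ is a \emph{sub}solution of the heat operator near infinity, so preservation of the decay rate genuinely requires the flexibility of pseudolocality combined with the perturbative analysis, not a single comparison function.
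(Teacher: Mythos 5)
The paper proves this lemma only by citation to \cite{FraNUT}, but the analogous $\mathbb{CP}^2\setminus\{\text{pt}\}$ result (Corollary \ref{GAFCpre}) is proved in detail using the same three-part split you use: maximum-principle arguments for $u\le 1$ and $b_s,c_s\ge 0$ (via the \cite{App1} lemmas), and a distance-like function together with \cite[Proposition B.10]{Lott} for the curvature-decay condition. Your treatment of (i) and (ii) is consistent with that route; your treatment of (iii) is genuinely different, and it is there that the argument breaks.

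The central gap is the volume hypothesis of pseudolocality in the ALF regime. You take balls $B_{g(0)}(p,d/2)$ with $d=d_{g(0)}(0,p)$ large and assert they have ``near-Euclidean isoperimetric constant.'' But in case (i) of Lemma \ref{masswelldfiendforFra} -- positive mass, the case actually relevant to the paper, since the flow is meant to converge to Taub-NUT -- the geometry at infinity is ALF: $b\sim s$, $c\to m_g^{-1}$ finite. A ball $B(p,r_0)$ with $r_0\gg c(\infty)$ wraps the $S^1$ fibre and has volume comparable to $r_0^3\cdot c(\infty)$, which is far below $(1-\delta)\,\omega\, r_0^4$. So the volume lower bound in Theorem \ref{pseudolocalitynoncompact} fails at scale $r_0\sim d/2$, and pseudolocality cannot be invoked at the scale you need to extract the rate $d^{-2-\epsilon}$. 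It \emph{can} be invoked at fixed small scale $r_0<c(\infty)$, but that only gives a uniform curvature bound near infinity, not decay.

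Two further soft spots compound this. First, the ``lower-order perturbation of the flat heat equation + heat-kernel estimates propagate polynomial decay'' step is unproved and nontrivial: the relevant model near infinity is $\mathbb{R}^3\times S^1$, not $\mathbb{R}^4$, the linearisation must be carried out about a time-dependent background, and one has to track how the nonlinearity and the moving radial coordinate feed into the decay exponent. Second, your iteration is not obviously closed: you concede the exponent ``may decrease'' at each step, and you assert $\tau_0$ is uniform, but the constants $C'$, $\rho_\delta$ entering each step are not shown to be controlled independently of the iteration, so a priori the exponent could degenerate.

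By contrast, the route taken in \cite{FraNUT} and mirrored in Theorem \ref{curvdecaypre} constructs an explicit distance-like function $\phi(s_0)=\sqrt{s_0^2+1}$, verifies $|\nabla\phi|$ and $\mathrm{Hess}_{g_0}\phi$ bounds using $b_s,c_s\ge 0$ and Lemma \ref{lemma}, notes $\mathrm{Hess}_{g_0}\phi\ge 0$ so that $B_0=0$, and then applies \cite[Proposition B.10]{Lott} directly. This sidesteps both pseudolocality at large scale (hence the ALF volume obstruction) and the iteration. Your closing remark that ``a single comparison function'' cannot work somewhat overstates the obstruction: the function $d^{-2-\epsilon}$ alone is indeed a subsolution, but \cite[Proposition B.10]{Lott} is a maximum-principle result formulated with the correct (time-dependent, $\phi$-built) barrier, and it does apply here once the Hessian sign is checked. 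If you want a self-contained proof of (iii) you should aim at reproducing that barrier construction rather than pseudolocality.
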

	Mass conservation is also shown:
	\begin{corollary}[\cite{FraNUT} Corollary 3.2]
		Let $(\mathbb{R}^4, g(t))_{0\leq t <T\leq \infty}$ be the maximal solution to the Ricci flow starting at some $g_{0}\in G_{AF, \mathbb{R}^4}$ with positive mass $m_{g_{0}}$. Then $m_{g(t)}=m_{g_{0}}$ for all $t\in[0,T)$.
	\end{corollary}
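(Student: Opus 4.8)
The plan is to identify the mass with the spatial limit $\ell(t)\coloneqq\lim_{s\to\infty}c(s,t)\in(0,\infty]$, so that $m_{g(t)}=\ell(t)^{-1}$ (with $\infty^{-1}=0$), and to show $\ell$ is constant on $[0,T)$ by playing the evolution equation for $c$ in (\ref{riccifloweqaforbc}) against quantitative curvature decay. Since $m_{g_{0}}>0$, Lemma \ref{masswelldfiendforFra} puts $g_{0}$ in case i): $b_{s}\to2$, $c_{s}\to0$, $c\to m_{g_{0}}^{-1}$, $b\to\infty$ as $s\to\infty$. By Lemma \ref{GR4preserved} every $g(t)$ again lies in $G_{AF,\mathbb{R}^{4}}$, so, being of type i) or ii), $c(\cdot,t)$ is nondecreasing in $s$ with $c(0,t)=0$; hence $\ell(t)$ exists, is positive, and $m_{g(t)}=\ell(t)^{-1}$.

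Fix $T'<T$. The first ingredient is a maximum-principle bound. Writing $\Delta_{g(t)}c=c_{ss}+2\tfrac{b_{s}}{b}c_{s}+\tfrac{c_{s}^{2}}{c}$, equation (\ref{riccifloweqaforbc}) gives $c_{t}=\Delta_{g(t)}c-\tfrac{c_{s}^{2}}{c}-2\tfrac{c^{3}}{b^{4}}\leq\Delta_{g(t)}c$, and likewise $b_{t}\leq\Delta_{g(t)}b$ after using $c\leq b$ (from $u\leq1$); since the flow has bounded curvature, hence Ricci bounded below, on $[0,T']$ and $b$ has linear growth by Lemma \ref{masswelldfiendforFra}, a maximum principle for subsolutions of polynomial growth yields the $t$-uniform bounds $c(\cdot,t)\leq\ell(0)<\infty$ and $b(\cdot,t)\leq C(1+s)$ on $[0,T']$. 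In particular $\ell(t)\leq\ell(0)$, so $g(t)$ stays of type i) and $m_{g(t)}\geq m_{g_{0}}$. The second ingredient is propagation of condition iii) of Definition \ref{GAF} with $t$-uniform constants: there are $C,\epsilon'>0$ (depending on $T'$) with $|Rm_{g(t)}|_{g(t)}\leq C(1+s)^{-2-\epsilon'}$ on $[0,T']$, $s=d_{g(t)}(0,\cdot)$. This I would prove by a Shi-type barrier argument at spatial infinity for a weighted power of $|Rm|$, using $\partial_{t}|Rm|\leq\Delta|Rm|+C_{0}|Rm|^{2}$ and the bounds above; it is essentially the content of the estimates preceding Corollary 3.2 in \cite{FraNUT}. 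Feeding the curvature bound back (via $|c_{ss}|=c\,|\mathrm{sec}(\partial_{s},X_{3}/c)|\leq c\,|Rm|$ and the analogous control of $\tfrac{b_{s}c_{s}}{b}$ and $\tfrac{c^{3}}{b^{4}}$, now that $b\sim s$ uniformly), the right-hand side $F$ of the $c$-equation satisfies $\eta(R)\coloneqq\sup_{t\leq T'}\sup_{s\geq R}|F(s,t)|\to0$ as $R\to\infty$.

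To conclude, I would integrate in time in a radial coordinate $r$ that is fixed in time. Completeness of $g(t)$ gives $s(r,t)\to\infty$ as $r\to\infty$, so $\lim_{r\to\infty}c(r,t)=\ell(t)$; and since the $g(t)$ are uniformly equivalent on $[0,T']$, for each $R$ there is $R'$ with $s(r,\tau)\geq R$ for all $r\geq R'$, $\tau\leq T'$. Integrating $c_{t}=F$ over $[0,t]$ then gives $|c(r,t)-c(r,0)|\leq T'\eta(R)$ for $r\geq R'$; letting $r\to\infty$ yields $|\ell(t)-\ell(0)|\leq T'\eta(R)$ for every $R$, and then $R\to\infty$ forces $\ell(t)=\ell(0)=m_{g_{0}}^{-1}$. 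Hence $m_{g(t)}=m_{g_{0}}$ for all $t\in[0,T')$, as $T'<T$ was arbitrary.

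I expect the main obstacle to be the $t$-uniform curvature-decay estimate: mere boundedness of curvature on $[0,T']$ is routine, but getting the spatial decay rate with constants independent of $t$ needs a genuine barrier/maximum-principle argument at spatial infinity, and it is this uniformity that makes the exchange of limits in the final step legitimate. A secondary technical point is that $s$ is a time-dependent coordinate, which is why the last step is run in the fixed coordinate $r$.
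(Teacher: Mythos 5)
The paper itself does not reproduce a proof of this corollary: it is stated as a citation to \cite{FraNUT}, and the only hint given about the proof's structure is the remark before Corollary~\ref{massconv} that it ``only relies on the properties of the metric at infinity.'' So there is no in-paper proof to compare against line by line.

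That said, your plan is consistent with the paper's characterization and is a reasonable reconstruction. Identifying $m_{g(t)}^{-1}$ with $\ell(t)=\lim_{s\to\infty}c(s,t)$, working in a fixed-in-time radial coordinate $r$, bounding the right-hand side $F$ of the $c$-equation near spatial infinity by the decaying curvature, and integrating in time is indeed an argument localized ``at infinity,'' and the preliminary maximum-principle step (using $c_t\leq\Delta_{g(t)}c$, which you derive correctly, together with $u\leq1$ to get $b_t\leq\Delta_{g(t)}b$) correctly rules out the type-ii) case so that $\ell(t)<\infty$. Where your proposal diverges from the route the paper signals is in the key ingredient: you propose establishing $t$-uniform curvature decay via a Shi-type barrier for a weighted power of $|Rm|$. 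The paper, for the $\mathbb{CP}^2\setminus\{\text{pt}\}$ analogue (Theorem~\ref{curvdecaypre}), instead exhibits an explicit distance-like function with nonnegative Hessian and invokes \cite[Proposition~B.10]{Lott}, stating that it is adapting \cite[Lemma~3.1]{FraNUT}; so the version in \cite{FraNUT} that Corollary~3.2 there rests on is presumably of the distance-like-function type rather than a barrier argument. Your route may well work but is not the one the paper (or, it seems, \cite{FraNUT}) uses, and you have correctly identified it as the load-bearing step.

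Two smaller points worth tightening. First, to make $\eta(R)\to0$ in your final step you implicitly need a $t$-uniform lower bound $b(s,t)\gtrsim s$ near infinity (not just the upper bound $b\leq C(1+s)$); this is available because $G_{AF,\mathbb{R}^4}$ is preserved (Lemma~\ref{GR4preserved}) and $b_s\geq0$ with $b_s\to1$ or $2$, and one can deduce uniformity from the uniform curvature decay, but it should be stated rather than left tacit. Second, the noncompact maximum principle you invoke for $c$ and $b$ requires a growth hypothesis on the subsolutions; you should record that $c$ is bounded and $b$ has linear growth at the initial time (from Lemma~\ref{masswelldfiendforFra}) and that bounded curvature on $[0,T']$ supplies the needed completeness and Ricci lower bound.
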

	The Taub-NUT family is shown to be stable in the following sense:
	
	\begin{theorem}[\cite{FraNUT}, Theorem 1] \label{Fra1}
		Let $(\mathbb{R}^4, g(t))_{t \geq 0}$ be the maximal solution to the Ricci flow starting at some $g_{0}\in G_{AF, \mathbb{R}^4}$ with positive mass $m_{g_{0}}$. Then $g(t)$ is an immortal solution which converges to a member of the Taub-NUT family with mass $m_{g_0}$ in the pointed Cheeger-Gromov sense as $t \rightarrow \infty$.
	\end{theorem}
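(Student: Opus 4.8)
We sketch the argument of \cite{FraNUT}. Write the flow in the reduced form (\ref{rf}), so that $b(\cdot,t)$ and $c(\cdot,t)$ solve the scalar system (\ref{riccifloweqaforbc}) with the nut boundary behaviour of Condition \ref{R^4conds} at $s=0$. By Lemma \ref{GR4preserved} the class $G_{AF,\mathbb{R}^4}$ is preserved along the flow, by Lemma \ref{masswelldfiendforFra} each time slice falls into one of the two asymptotic regimes, and by \cite[Corollary 3.2]{FraNUT} the mass is constant, $m_{g(t)}=m_{g_0}>0$; in particular every $g(t)$ lies in regime i), with $b_s\to 2$, $c_s\to 0$ and $c\to m_{g_0}^{-1}$ at spatial infinity. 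It then remains to establish: (a) the flow is immortal; (b) it subconverges as $t\to\infty$, in the pointed $C^\infty$ Cheeger--Gromov sense, to a Ricci-flat metric still lying in $G_{AF,\mathbb{R}^4}$; and (c) this limit is the Taub-NUT metric of mass $m_{g_0}$, with convergence holding without passing to a subsequence.

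The plan for (a) is to derive a uniform-in-time curvature bound. Differentiating (\ref{riccifloweqaforbc}), one shows that $b_s$, $c_s$ and $u=c/b$ satisfy parabolic equations for which a maximum principle (valid on the noncompact $\mathbb{R}^4$ because the relevant quantities have prescribed behaviour at the nut $s=0$ and at spatial infinity) gives a priori bounds $b_s,c_s\le C$, together with interior derivative estimates bounding $b_{ss},c_{ss}$. Inserting these into the curvature formulas (\ref{sectcurv}) yields an estimate of the form $c^2\lvert\mathrm{Rm}_{g(t)}\rvert_{g(t)}\le C$, uniformly in $t$ (the $\mathbb{R}^4$ analogue of the Proposition of Section \ref{GAFsection}). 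Since $c$ is nondecreasing with $c\to m_{g_0}^{-1}>0$, this is already a uniform curvature bound away from the nut; near the nut, where the metric is close to flat $\mathbb{R}^4$, pseudolocality \cite{pseudolocality} and the smoothness of the initial data keep the curvature bounded. Combining, $\sup_{\mathbb{R}^4\times[0,T)}\lvert\mathrm{Rm}\rvert<\infty$, so by Shi's estimates the flow extends past any finite time and is immortal.

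For (b) and (c): with the uniform curvature bound, and the injectivity radius at the nut bounded below (the metric is asymptotically flat there), Hamilton's compactness theorem gives, along any $t_i\to\infty$, a subsequence with $(\mathbb{R}^4,g(t_i),0)$ converging in the pointed $C^\infty$ Cheeger--Gromov sense to a limit Ricci flow $(N_\infty,g_\infty,p_\infty)$. The crucial point is that this limit is static. I would prove this either by producing a Lyapunov functional adapted to the ALF end --- whose boundary terms are controlled by the decay hypothesis iii) in Definition \ref{GAF} --- giving $\int_0^\infty\!\int_{\mathbb{R}^4}\lvert\mathrm{Ric}_{g(t)}\rvert^2\,dV_{g(t)}\,dt<\infty$ and hence $\mathrm{Ric}_{g(t_i)}\to 0$ in $L^2$, or, more in the spirit of the one-dimensional setup, by sub/supersolution comparison forcing $b_s\to 2-u_\infty$ and $c_s\to u_\infty^2$ pointwise; bootstrapping with the curvature bounds upgrades this to $\mathrm{Ric}_{g(t_i)}\to 0$ in $C^\infty_{loc}$. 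Thus $g_\infty$ is a complete $U(2)$-invariant Ricci-flat metric with $u\le 1$ and $b_s,c_s\ge 0$, and a short ODE analysis of (\ref{sectcurv}) with $\mathrm{Ric}=0$ shows the only such metrics are flat $\mathbb{R}^4$ and the Taub-NUT metrics (\ref{TNeq}). Since the mass is continuous under this convergence and equals $m_{g_0}>0$, the flat case is excluded, so $g_\infty$ is the Taub-NUT metric of mass $m_{g_0}$. As all Taub-NUT metrics of a fixed mass are isometric (by the rescaling diffeomorphism $\phi(r)=\tfrac{n}{m}r$ of \ref{TN}), every subsequential limit is this same metric, and hence the full flow converges to it.

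\emph{Main obstacle.} Two points carry the real difficulty. The first is upgrading the short-time curvature control near the nut coming from pseudolocality to a bound valid for all $t<T$; this requires exploiting the precise ALF asymptotics (Hopf fibres of bounded length over an $\mathbb{R}^3$ base) together with the global estimate $c^2\lvert\mathrm{Rm}\rvert\le C$, for instance via a contradiction argument ruling out a $U(2)$-symmetric singularity model on $\mathbb{R}^4$. The second, and I expect the heart of the proof, is showing the limit flow is static: there is no off-the-shelf Perelman-type monotone functional on the noncompact $\mathbb{R}^4$, so one must either construct an adapted monotonicity with vanishing boundary contributions or carry out the one-dimensional comparison argument in full.
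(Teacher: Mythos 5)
This statement is a citation: the paper quotes it verbatim as \emph{\cite{FraNUT}, Theorem 1} (Di Giovanni's stability result) and contains no proof of its own. It is one of several results imported as black boxes from that reference — along with Lemma \ref{masswelldfiendforFra}, Lemma \ref{GR4preserved} and Corollary 3.2 on mass conservation — and is used downstream in the proof of Theorem \ref{fullthm2} to conclude convergence of the post-surgery flow. There is therefore nothing internal to the present paper against which your sketch can be checked.

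As a free-standing reconstruction of what a proof must involve, your outline is sensible and you have correctly located the two genuine pressure points (extending curvature control near the nut past the pseudolocality time window, and establishing that a subsequential Cheeger--Gromov limit is static without a compact-manifold Perelman-type monotonicity). Two cautions, though. First, be careful with the step "$c^2\lvert\mathrm{Rm}\rvert\le C$ plus $c\to m_{g_0}^{-1}$ gives a curvature bound away from the nut": on $\mathbb{R}^4$, $c$ is odd and vanishes at $s=0$, so this estimate degenerates precisely at the nut, and you still need a genuinely separate argument there that survives for all time, not just the pseudolocality window. Second, the classification step ("the only complete $U(2)$-invariant Ricci-flat metrics on $\mathbb{R}^4$ with $u\le1$, $b_s,c_s\ge 0$ are flat space and Taub-NUT") is stated but not argued; it is believable but requires a proof, and is where degenerate limits (e.g.\ collapse of the $\sigma_3$ circle to give a lower-dimensional limit) must be excluded by the lower bound on $c$ at infinity. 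Since none of this is proved in the paper, you should consult \cite{FraNUT} directly to see which of your suggested mechanisms the original proof actually uses.
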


	\subsection{Ricci flow in $G_{AF,\mathbb{CP}^2\setminus\{\text{pt}\}}$}\label{GAFpre}

	We define the analogue of $G_{AF, \mathbb{R}^4}$ for $\mathbb{CP}^2\setminus \{ \text{pt}\}$. 
	
	\begin{definition} \label{G}
		Define $G_{AF, \mathbb{CP}^2\setminus\{\mathrm{pt}\}}$ to be the class of all complete metrics $g$ of the form (\ref{eq:1}) on $\mathbb{CP}^2\setminus\{ \mathrm{pt}\}$ such that
		\begin{enumerate}[i)]
			\item $u \leq 1$,
			\item $b_{s},c_{s}\geq 0$,
			\item $\sup_{p\in \mathbb{CP}^2\setminus\{ \mathrm{pt}\}} (d_g(bolt,p))^{2+\epsilon} \left\lvert Rm_{g} \right\rvert _{g}(p) < \infty, \text{ for some } \epsilon>0$.
		\end{enumerate}
	\end{definition}
	From the explicit formula (\ref{TB}), it can be easily verified that $g_{Bolt} \in G_{AF, \mathbb{CP}^2\setminus\{\mathrm{pt}\}}$.
	We note that metrics in $G_{AF, \mathbb{CP}^2\setminus\{\mathrm{pt}\}}$ are complete and have bounded curvature, and so the following theorem gives the existence of a unique maximal solution to the Ricci flow with initial metric in $G_{AF, \mathbb{CP}^2\setminus\{\mathrm{pt}\}}$. 
	We will now show that $G_{AF, \mathbb{CP}^2-\{\text{pt}\}}$ is preserved under the Ricci flow. First we need some curvature estimates. For the rest of Section \ref{GAFsection} we assume that we have a $U(2)$-symmetric Ricci flow on $(\mathbb{CP}^2\setminus \{\mathrm{pt}\}$, with $b,c$ defined by \ref{rf}.
	
	\begin{lemma}[\cite{App1} Lemmas 3.3, 3.4 and 5.1] \label{lemma}
		Let $(\mathbb{CP}^2\setminus \{\mathrm{pt}\},g)$ satisfy $\lvert Rm_{g} \rvert _{g} \leq K$, with $g$ of the form (\ref{eq:1}). Then, as $s\rightarrow \infty$,
		\begin{equation*}
			\lvert c_{s} \rvert , \left\lvert \frac{c}{b}b_{s} \right\rvert , \left\lvert u^2 \right\rvert = O\left(e^{2\sqrt{K}s}\right).
		\end{equation*}
		Furthermore,
		\begin{enumerate}[i)]
			\item $b^2\geq \frac{1}{K}$,
			\item $\frac{b_{s}^2}{b^2} \leq 5K$, 
			\item $\frac{u^2}{b^2}\leq \frac{5K}{3}.$
			\item $-2\sqrt{K}<\frac{c_{s}}{c}<\frac{1}{s}+\sqrt{K}.$
		\end{enumerate}
	\end{lemma}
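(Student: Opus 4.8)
The plan is to convert the single scalar bound $\lvert Rm_g\rvert_g \le K$ into pointwise control of each sectional curvature appearing in (\ref{sectcurv}), and then read these off as differential inequalities for $b$ and $c$. Expressing the components in (\ref{sectcurv}) in an orthonormal frame adapted to (\ref{eq:1}), the hypothesis yields, up to a fixed dimensional constant which I absorb into $K$, the pointwise bounds $\lvert b_{ss}/b\rvert \le K$, $\lvert c_{ss}/c\rvert \le K$, $\lvert 4 - 3u^2 - b_s^2\rvert \le Kb^2$, together with the analogous bound coming from the $X_1\wedge X_3$--plane. In particular $b$ and $c$ satisfy Jacobi-type equations $b_{ss} = -K_{01}b$ and $c_{ss} = -K_{03}c$ with $\lvert K_{01}\rvert,\lvert K_{03}\rvert \le K$, and $b_s^2 + 3u^2 \le 4 + Kb^2$; these are the only ingredients.

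I would treat estimates (i)--(iii) together, with (i) the crux. At the bolt, Condition \ref{0(-1)cond} forces $b_s(0)=0$ and $u(0)=c(0)/b(0)=0$, so the $X_1\wedge X_2$ sectional curvature there equals $4/b(0)^2$ and the hypothesis gives $b(0)^2 \ge 4/K$. To propagate this I would argue that $\inf_{s\ge 0} b$ is approached at a point where $b_s\to 0$ --- either an interior critical point of $b$, or a limit as $s\to 0^+$ or $s\to\infty$ (using that $b_{ss}$ is bounded, so $b_s$ has a limit at the ends) --- and that at such a point the $X_1\wedge X_2$ and $X_1\wedge X_3$ curvature bounds combine into a single inequality of the shape $4 \lesssim Kb^2(1+3b^2)$, which after the case reduction $b^2\le 1$ forces $b^2\ge 1/K$. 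This is \textbf{the main obstacle}: it is the only step that is genuinely global and that must use the boundary behaviour at the bolt rather than a purely pointwise manipulation, and for the sharp constant I would follow the bookkeeping of \cite{App1}. Granting (i), estimates (ii) and (iii) are immediate from $b_s^2+3u^2\le 4+Kb^2$ together with $b^2\ge 1/K$: one has $b_s^2 \le 4 + Kb^2 \le 5Kb^2$ and $3u^2 \le 4 + Kb^2 \le 5Kb^2$, i.e.\ $b_s^2/b^2 \le 5K$ and $u^2/b^2 \le 5K/3$.

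Estimate (iv) is a Sturm/Rauch comparison for $c$, which (up to normalisation) is the norm of the Killing field $\partial_\psi$ along the geodesic $\gamma(s)$. For the upper bound: $c_{ss} = -K_{03}c \ge -Kc$ with $c(0)=0$, so Sturm comparison with $K^{-1/2}\sinh(\sqrt{K}s)$ gives $c_s/c \le \sqrt{K}\coth(\sqrt{K}s)$, and the elementary inequality $x\coth x \le 1+x$ (equivalently $e^{2x}\ge 1+2x$) rewrites this as $c_s/c \le 1/s + \sqrt{K}$. For the lower bound I would use the first integral $E \coloneqq c_s^2 - Kc^2$: since $c_{ss}\le Kc$ we have $E' = 2c_s(c_{ss}-Kc)\le 0$ wherever $c_s\ge 0$, and since $c_s(0)>0$ (smoothness of the metric at the bolt), a short continuity argument --- ruling out the regime $E>0$ with $c_s<0$, which would force $E$ non-decreasing and $c$ to decay exponentially, contradicting $E = c^2\big((c_s/c)^2-K\big)\to 0$ --- shows $c_s^2\le Kc^2$ whenever $c_s\le 0$, hence $c_s/c \ge -\sqrt{K} > -2\sqrt{K}$ throughout.

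Finally, for the exponential growth statements I would run a Gronwall/comparison argument: $c(0)=0$ and $c_{ss}\le Kc$ give $c = O(e^{\sqrt{K}s})$ by comparison with $\sinh(\sqrt{K}s)$; then $\lvert c_s\rvert \le c\,\lvert c_s/c\rvert = O(e^{\sqrt{K}s})$ by part (iv); $u^2 = c^2/b^2 \le Kc^2 = O(e^{2\sqrt{K}s})$ by part (i) (this is where the doubled exponent genuinely occurs); and $\lvert (c/b)b_s\rvert = c\,\lvert b_s/b\rvert \le \sqrt{5K}\,c = O(e^{\sqrt{K}s})$ by part (ii). So all three quantities are $O(e^{2\sqrt{K}s})$. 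Thus the whole lemma reduces, modulo routine ODE and Sturm comparisons and careful tracking of the dimensional constant relating $\lvert Rm_g\rvert_g$ to the individual sectional curvatures, to the single global estimate (i), where I expect the difficulty to be concentrated.
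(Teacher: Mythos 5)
The paper does not actually give a proof of this lemma: the proof reads, verbatim, ``The contents of the lemma is contained in statements of \cite[Lemmas 3.3, 3.4, 5.1]{App1} when $k=1$.'' So your proposal is not a restatement of the paper's argument but an independent reconstruction of the cited result. I cannot compare line-by-line with Appleton's proof here, but the route you take -- read off from (\ref{sectcurv}) the pointwise sectional curvature bounds
\begin{equation*}
\left\lvert \tfrac{b_{ss}}{b} \right\rvert,\ \left\lvert \tfrac{c_{ss}}{c}\right\rvert \le K, \qquad \left\lvert \tfrac{4-3u^2-b_s^2}{b^2}\right\rvert \le K, \qquad \left\lvert \tfrac{u^2}{b^2}-\tfrac{b_s c_s}{bc}\right\rvert \le K,
\end{equation*}
and run Riccati/Sturm comparison -- is plausibly the same strategy, and the constants you obtain match the lemma exactly: $b_s^2+3u^2 \le 4+Kb^2 \le 5Kb^2$ gives (ii) and (iii) verbatim once (i) is in hand, and your critical-point argument for (i) produces $4\le 4Kb^2$, i.e.\ $b^2\ge 1/K$, on the nose.

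Two points deserve care. First, you have a sign slip in the Sturm step for the upper bound in (iv): the inequality you need is $c_{ss}\le Kc$ (equivalently $K_{03}\ge -K$), so that $p\coloneqq -c_{ss}/c \ge -K = \hat{p}$ and Sturm comparison with $\hat{c}=K^{-1/2}\sinh(\sqrt{K}s)$ gives $c_s/c\le \hat{c}_s/\hat{c}$; you wrote $c_{ss}\ge -Kc$, which is the other half of $\lvert K_{03}\rvert\le K$ and feeds the \emph{lower} bound, not the upper. The elementary inequality $x\coth x\le 1+x$ is fine. Second, and more seriously, you flag the propagation of (i) to a global estimate as the crux but leave it at the level of a sketch. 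The argument is clean at a genuine interior critical point of $b$ (there $b_s=0$, $K_{13}=u^2/b^2$ gives $u^2\le Kb^2$, and $K_{12}=(4-3u^2)/b^2\le K$ then forces $b^2\ge 1/K$), and at $s=0$ one gets the stronger $b(0)^2\ge 4/K$ from $K_{12}(0)=4/b(0)^2$. But the case where $\inf_s b$ is only approached as $s\to\infty$, or where $b$ is monotone with no interior critical point, needs an actual argument that $b_s$ tends to $0$ along a sequence realising the infimum; ``$b_{ss}$ is bounded'' alone does not give this without using completeness of the metric and boundedness of $b$ along that sequence. This is exactly the step you correctly identify as the obstacle, so you should either carry it out or be explicit that it is taken from \cite{App1}. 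Finally, note that your lower bound $c_s/c\ge -\sqrt{K}$ is strictly stronger than the stated $-2\sqrt{K}$ -- not an error, but worth being aware that the lemma's constant is not sharp under your argument.
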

	\begin{proof}
		The contents of the lemma is contained in statements of \cite[Lemmas 3.3, 3.4, 5.1]{App1} when $k=1$.
	\end{proof}
	\subsubsection{Preservation of $u\leq 1$ and $b_{s},c_{s}\geq 0$}
	
	\begin{lemma} \label{u}
		Let $(\mathbb{CP}^2\setminus \{\mathrm{pt}\},g(t))$ for $[0,T]$ be a Ricci flow with bounded curvature. If $g(0)$ satisfies $u \leq 1$, then $u \leq 1$ for all $t \in [0,T]$. 
	\end{lemma}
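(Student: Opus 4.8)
The plan is to turn the system (\ref{riccifloweqaforbc}) into a single scalar reaction--diffusion equation for $u=c/b$ and then run a maximum principle, using the fact that $u\equiv 1$ is itself a stationary solution of that equation so that preserving $u\le 1$ becomes a comparison statement.

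Concretely, I would write $\partial_t u\rvert_r=(c_t\,b-c\,b_t)/b^2$, insert the expressions (\ref{riccifloweqaforbc}) for $b_t$ and $c_t$, and rewrite everything in terms of $u$ using $c=ub$, $c_s=u_sb+ub_s$, $c_{ss}=u_{ss}b+2u_sb_s+ub_{ss}$. A short computation, in which the $b_{ss}$- and $b_s^2$-terms cancel, gives
$$\partial_t u\rvert_r = u_{ss} + 3\,\frac{b_s}{b}\,u_s + \frac{4u\,(1-u^2)}{b^2}.$$
This holds on the dense set $(0,\infty)\times S^3$ and extends to all of $M=\mathbb{CP}^2\setminus\{\mathrm{pt}\}$ since $u$ is smooth across the bolt, where $u\to 0$ by Condition \ref{0(-1)cond}; in particular $M$ has no boundary, and the only spatial "edge'' is $s\to\infty$. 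The decisive structural feature is that the zeroth-order term $\tfrac{4u(1-u^2)}{b^2}$ vanishes at $u=1$ and is strictly negative for $u>1$.

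I would then set $w=u-1$; since $u=c/b>0$ we always have $w>-1$, and $1-u^2=-w(w+2)$, so $w$ satisfies
$$\partial_t w\rvert_r = w_{ss} + 3\,\frac{b_s}{b}\,w_s - \frac{4(w+1)(w+2)}{b^2}\,w,$$
an equation of the form $\partial_t w=w_{ss}+Vw_s+c(s,t)w$ with $V=3b_s/b$ and $c(s,t)=-4(w+1)(w+2)/b^2\le 0$ everywhere (using $w>-1$). At any spatial maximum of $w(\cdot,t)$ attained at finite $s$ with $w=0$ one has $w_s=0$, $w_{ss}\le 0$, $cw=0$, hence $\partial_t w\le 0$; together with $w(\cdot,0)\le 0$ this forbids $w$ from reaching a positive value. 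To make this rigorous on the non-compact $M$ I would use the bounded-curvature hypothesis via Lemma \ref{lemma}: $b^2\ge 1/K$ and $b_s^2/b^2\le 5K$ make $V$ and $1/b^2$ bounded, $u^2/b^2\le 5K/3$ makes $c$ bounded below as well, and $u^2=O(e^{2\sqrt K s})$ bounds $u$ by an exponential in the distance to the bolt; with these a standard maximum principle for complete manifolds applies (comparing $w$ against $\varepsilon(t+\varphi)$ for a suitable proper function $\varphi$, or invoking an Omori--Yau/Karp--Li type statement), yielding $w\le 0$, i.e. $u\le 1$, on all of $[0,T]$. The main point requiring care is precisely this last step — ruling out a maximum of $u$ escaping to $s=\infty$ — and it is exactly what the asymptotic estimates of Lemma \ref{lemma} are for; the bolt itself is harmless, since $u$ is smooth there and equal to $0$.
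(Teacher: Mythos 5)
Your proposal is correct and is, in substance, the argument that the paper delegates to \cite[Lemma~5.2]{App1} (the paper's own proof is merely a citation): reduce to the scalar evolution equation for $u=c/b$, observe that the reaction term $4u(1-u^2)/b^2$ vanishes at $u=1$ and is strictly negative for $u>1$ (with $u>0$ keeping $w=u-1>-1$), and run a parabolic comparison. Your computation of $\partial_t u\rvert_r=u_{ss}+3\tfrac{b_s}{b}u_s+\tfrac{4u(1-u^2)}{b^2}$ is correct, and your identification of the non-compact maximum principle as the delicate step --- with $3b_s/b$, $1/b^2$ and $u^2/b^2$ bounded via Lemma~\ref{lemma} supplying the needed control at $s\to\infty$ --- is exactly where the care is required.
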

	\begin{proof}
		This is exactly \cite[Lemma 5.2]{App1} when $k=1$.
	\end{proof}
	
	\begin{lemma}\label{b_sc_s}
		Let $(\mathbb{CP}^2\setminus \{\mathrm{pt}\},g(t))$ for $[0,T]$ be a Ricci flow with bounded curvature. If $g(0)$ satisfies $b_{s},c_{s} \geq 0$, then $b_{s},c_{s} \geq 0$ for all $t \in [0,T]$. 
	\end{lemma}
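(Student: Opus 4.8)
The plan is to differentiate the evolution equations (\ref{riccifloweqaforbc}) in $s$, obtain a weakly coupled parabolic system for $b_s$ and $c_s$, and close it with a maximum principle in which the two inequalities are preserved simultaneously. Using the commutator $[\partial_t\rvert_r,\partial_s]=-\tfrac{a_t}{a}\,\partial_s$ together with $\tfrac{a_t}{a}=2\tfrac{b_{ss}}{b}+\tfrac{c_{ss}}{c}$ (this is $-Ric(\partial_s,\partial_s)$, computed from (\ref{sectcurv})), a computation — in which the mixed terms $\tfrac{c_{ss}b_s}{c}$ and $\tfrac{2b_sb_{ss}}{b}$ cancel against the corresponding pieces of $-\tfrac{a_t}{a}b_s$, and similarly for $c_s$ — gives
\begin{align*}
\partial_t b_s&=b_{sss}+\frac{c_s}{c}\,b_{ss}+\Big(\frac{4}{b^2}-\frac{b_s^2}{b^2}-\frac{c_s^2}{c^2}-\frac{6c^2}{b^4}\Big)b_s+\frac{4cc_s}{b^3},\\
\partial_t c_s&=c_{sss}+\Big(\frac{2b_s}{b}-\frac{c_s}{c}\Big)c_{ss}+\Big(-\frac{2b_s^2}{b^2}-\frac{6c^2}{b^4}\Big)c_s+\frac{8c^3b_s}{b^5}.
\end{align*}
The point is that in each equation the zeroth order term is a (sign-indefinite but, away from the bolt, bounded) coefficient times $b_s$, respectively $c_s$, plus a coupling source equal to a \emph{nonnegative} quantity times $c_s$, respectively $b_s$. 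Hence at a spatial interior minimum of $b_s$ at which $b_s=0$ one has $\partial_tb_s\ge\tfrac{4cc_s}{b^3}\ge0$ as long as $c_s\ge0$ there, and symmetrically $\partial_tc_s\ge\tfrac{8c^3b_s}{b^5}\ge0$ at a spatial interior minimum of $c_s$ at which $c_s=0$ provided $b_s\ge0$: the two inequalities $b_s\ge0$ and $c_s\ge0$ must be preserved \emph{jointly}.

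I would then track $w_{\min}(t)\coloneqq\inf_s\min\{b_s,c_s\}(\cdot,t)$ (or, to make all coefficients genuinely bounded, the infimum of $\min\{b_s/b,\,c_s/c\}$, which has the same sign since $b,c>0$) and show $w_{\min}(t)\ge0$ by a Gronwall argument: at a point realising a negative infimum the parabolic terms contribute $\ge0$, and, using the curvature estimates of Lemma \ref{lemma} ($b^{-2}\le K$, $b_s^2/b^2\le5K$, $c^2/b^4=u^2/b^2\le\tfrac{5K}{3}$, $\lvert c_s/c\rvert\le C(K)$ away from the bolt), the potential and coupling terms contribute $\ge-C(K)\lvert w_{\min}\rvert$, whence $\tfrac{d}{dt}w_{\min}\ge-C(K)\lvert w_{\min}\rvert$ and therefore $w_{\min}(t)\ge w_{\min}(0)e^{C(K)t}\ge0$. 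The coordinate singularity at the bolt $s=0$ is dealt with in the usual way: by Condition \ref{0(-1)cond} the functions $b,c$ extend to smooth even and odd functions of $s$, so $b_s$ extends oddly (hence $b_s(0,t)\equiv0$, compatibly with $b_s\ge0$) and $c_s$ extends evenly with $c_s(0,t)>0$ preserved by smoothness of the flow; after this reflection any negative interior infimum of $b_s$ or $c_s$ occurs at a genuine interior point and the computation above applies.

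The step I expect to be the main obstacle is making this maximum principle rigorous on the \emph{non-compact} manifold, i.e. ruling out the possibility that $\inf_s\min\{b_s,c_s\}$ drops below zero only in the limit $s\to\infty$ — recall (cf. the discussion of pseudolocality in the introduction) that these metrics have curvature decay, so a naive global maximum principle is unavailable. One handles this in the standard way, by running the Gronwall argument on a compact exhaustion against cut-off functions whose error terms are absorbed by the curvature bounds, or equivalently by inserting a barrier $w+\varepsilon e^{C(K)t}\phi$ with $\phi$ a suitable positive function of $b$ (or $c$) whose evolution is computed from (\ref{riccifloweqaforbc}), and letting $\varepsilon\downarrow0$. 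This is precisely the type of argument carried out by Appleton \cite{App1} in the setting of the line bundles $\mathcal{O}(-k)$, and our lemma is the case $k=1$.
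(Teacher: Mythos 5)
Your proposal is correct and reconstructs, in detail, the coupled maximum-principle argument underlying Appleton's result \cite[Lemma 5.6]{App1}, which the paper simply invokes verbatim (the paper's entire proof is the citation ``This is exactly \cite[Lemma 5.6]{App1} when $k=1$''). The evolution equations you derive for $b_s$ and $c_s$ agree with (\ref{b_sevolution}), (\ref{c_sevolution}) upon substituting $u=c/b$, and your identification of the nonnegative coupling sources $4cc_s/b^3$ and $8c^3b_s/b^5$ together with the treatment of non-compactness via a barrier/Gronwall argument and of the coordinate degeneracy at the bolt via even/odd reflection is precisely the structure of Appleton's proof; you have done more work than the paper, but it is the same approach.
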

	\begin{proof}
		This is exactly \cite[Lemma 5.6]{App1} when $k=1$.
	\end{proof}
	
	\subsubsection{Preservation of the curvature decay condition}
	We adapt the argument used in the proof of \cite[Lemma 3.1]{FraNUT} to prove the curvature decay condition:
	\begin{equation*}
		\sup_{p\in \mathbb{CP}^2\setminus\{ \text{pt}\}} (d_g(bolt,p))^{2+\epsilon} \left\lvert Rm_{g} \right\rvert _{g}(p(s)) < \infty, \text{ for some } \epsilon>0,
	\end{equation*}
	is preserved under the Ricci flow. 	
	
	We say that $\phi \in C^{\infty}(\mathbb{CP}^2\setminus \{\mathrm{pt}\})$ is a distance-like function on $\mathbb{CP}^2\setminus \{\mathrm{pt}\}$ if there exists $p_0\in M$ and $C>0$ such that 
	\begin{align*}
		C^{-1}(d_{g_{0}}(p,p_{0})+1)&\leq \phi(p) \leq C(d_{g_{0}}(p, p_{0})+1),\\
		\lvert \nabla \phi \rvert_{g_{0}} &\leq C,\\
		\text{Hess}_{g_{0}}(\phi)&\leq Cg_{0}.
	\end{align*}

	\begin{theorem} \label{curvdecaypre}
		Let $(\mathbb{CP}^2\setminus \{\mathrm{pt}\},g(t))_{0 \leq t <T}$ be the maximal Ricci flow solution starting at some $g_{0} \in G_{AF, \mathbb{CP}^2\setminus \{\mathrm{pt}\}}$, $p_{0}$ be a point on the bolt, and $\epsilon >0$ be such that
		\begin{equation}
			\sup_{p\in \mathbb{CP}^2\setminus\{ \mathrm{pt}\}} (d_{g(0)}(p_{0},p))^{2+\epsilon} \lvert Rm_{g(0)} \rvert _{g(0)}(p) < \infty.
		\end{equation}
		For $T'< T$, there exists $\alpha(T')$ such that
		\begin{equation*}
			\sup_{p\in \mathbb{CP}^2\setminus\{ \mathrm{pt}\}} (d_{g_{t}}(p_{0},p))^{2+\epsilon} \left\lvert Rm_{g(t)} \right\rvert _{g(t)}(p) \leq \alpha(T')
		\end{equation*}
		for all $t \in [0,T']$. 
	\end{theorem}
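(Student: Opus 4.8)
The plan is to follow the strategy of \cite[Lemma 3.1]{FraNUT}, adapting it to the $\mathbb{CP}^2\setminus\{\mathrm{pt}\}$ setting. The key observation is that the quantity $(d_{g(t)}(p_0,\cdot))^{2+\epsilon}|Rm_{g(t)}|_{g(t)}$ can be controlled by replacing the distance function $d_{g(t)}(p_0,\cdot)$ with a fixed-in-time distance-like function $\phi$ on $\mathbb{CP}^2\setminus\{\mathrm{pt}\}$, for which a parabolic maximum principle applies. First I would construct such a distance-like $\phi$ built from the radial coordinate $s$ of the initial metric $g_0$: because $g_0\in G_{AF,\mathbb{CP}^2\setminus\{\mathrm{pt}\}}$ has curvature decaying like $s^{-2-\epsilon}$, one checks that $\phi$ comparable to $s+1$ satisfies $|\nabla\phi|_{g_0}\le C$ and $\mathrm{Hess}_{g_0}(\phi)\le Cg_0$, using Lemma \ref{lemma} to bound $b_s/b$, $u/b$, etc. One then has to verify that under the Ricci flow, on the finite time interval $[0,T']$ where curvature stays bounded by some $K=K(T')$ (by Shi's estimates / the standard short-time theory for complete bounded-curvature metrics), $\phi$ remains distance-like with constants depending only on $T'$: this follows from the distance-distortion estimates $e^{-C t}g_0\le g(t)\le e^{C t}g_0$ and the evolution of the Hessian along the flow.

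The heart of the argument is then a maximum principle applied to the function
$$
F(p,t) \coloneqq (\phi(p)+A(t))^{2+\epsilon}\,|Rm_{g(t)}|_{g(t)}(p),
$$
for a suitable time-dependent shift $A(t)$ chosen large enough that $F$ is finite at $t=0$ and to absorb lower-order terms. Using the evolution inequality $\partial_t |Rm| \le \Delta|Rm| + c_n|Rm|^2$ together with $\partial_t\phi$ controlled by $|Rm|$ (since $\partial_t g = -2Ric$ and $\phi$ is Lipschitz in a fixed metric), one computes the evolution of $F$ and shows $\partial_t F \le \Delta F + (\text{good terms})$ outside a compact set, provided $A(t)$ grows appropriately. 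The curvature decay of $g_0$ gives the bound at $t=0$, and near the bolt (a fixed compact region) $F$ is automatically bounded by the short-time curvature bound $K(T')$. A localized maximum principle on the complete noncompact manifold — justified because $|Rm|$ is bounded and $\phi$ is proper — then propagates the bound forward in time, yielding
$$
\sup_{p}(\phi(p)+A(t))^{2+\epsilon}|Rm_{g(t)}|_{g(t)}(p) \le \alpha(T')
$$
for all $t\in[0,T']$, and translating back via the comparability $\phi\simeq d_{g(t)}(p_0,\cdot)$ gives the claim.

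The main obstacle I expect is controlling the interaction between the moving distance function and the curvature blow-up that may be occurring (as $t\to T$) on the bolt or a tubular neighbourhood: the curvature is \emph{not} assumed small, only bounded on $[0,T']$, so the maximum principle must be run with constants that degenerate as $T'\to T$, and one must be careful that the shift $A(t)$ and the Hessian bound on $\phi$ do not blow up before time $T'$. Concretely, the delicate point is verifying the differential inequality for $F$ near the spatial infinity of $\mathbb{CP}^2\setminus\{\mathrm{pt}\}$ uniformly in $t\le T'$, which is exactly where the $2+\epsilon$ power (rather than $2$) is needed to make the zeroth-order term in $\partial_t F$ have a favorable sign. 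This is the step that requires adapting — rather than quoting — the argument of \cite{FraNUT}, since the presence of the bolt changes the geometry of the "inner" region compared to the $\mathbb{R}^4$ case, though once the distance-like function is set up correctly the bolt contributes only through the harmless compact-region bound.
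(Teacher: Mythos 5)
Your proposal follows exactly the same strategy as the paper: build a fixed-in-time, $U(2)$-invariant, distance-like function $\phi$ comparable to the radial coordinate of $g_0$, verify $\lvert\nabla\phi\rvert_{g_0}\le C$ and $\mathrm{Hess}_{g_0}\phi\le C g_0$ using the curvature bounds from Lemma \ref{lemma}, and then propagate the weighted curvature bound forward in time on $[0,T']$. The one substantive difference is that the paper does not carry out the maximum-principle computation on $F=(\phi+A(t))^{2+\epsilon}\lvert Rm\rvert$ by hand; instead, once $\phi$ is shown to be distance-like with $\mathrm{Hess}_{g_0}\phi\ge0$ (which holds because $b_{s_0},c_{s_0}\ge0$), it simply invokes a black-box result, \cite[Proposition B.10]{Lott}, with the Hessian-lower-bound parameter $B_0=0$. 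That cited proposition already encapsulates the tensor maximum-principle/evolution-of-$\lvert Rm\rvert$ argument you are re-deriving, including the role of the extra $\epsilon$ in the exponent and the handling of the noncompact end. So your write-up is not wrong, but it reconstructs from scratch the content of a lemma the paper quotes; recognizing that citation both shortens the proof considerably and sidesteps the bookkeeping you flag as delicate (the time-dependent shift $A(t)$, the distance-distortion estimates, and the sign of the zeroth-order term). Also note a small slip in your sketch: $\phi$ is fixed in time, so $\partial_t\phi=0$; what actually enters is $\Delta_{g(t)}\phi$ and the comparison of $g(t)$ with $g_0$, not a time derivative of $\phi$ "controlled by $\lvert Rm\rvert$."
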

	
	\begin{proof}
		We will prove Theorem \ref{curvdecaypre} by using \cite[Proposition B.10]{Lott}.
		
		Write $$g(0)=ds_{0}^2+ b(s_{0})^2(\sigma_{1}^2+\sigma_{2}^2)+ c(s_{0})^2 \sigma_{3}^2$$ for some $b,c$ satisfying Conditions \ref{0(-1)cond}. Let $\partial_{s_{0}}, X_{1}, X_{2}, X_{3}$ be dual to $ds_{0}, \sigma_{1}, \sigma_{2}, \sigma_{3}$. Define $\phi \in C^{\infty}(M)$ by $\phi(s_{0})= \sqrt{s_{0}^2+1}$. Since $\phi$ is even as a function of $s_{0}$, it is indeed smooth on $M$. We have, 
		\begin{align*}
			\lvert \nabla_{g_{0}} \phi \rvert_{g_{0}} &= \lvert \partial_{s_{0}} \phi \rvert= \left\lvert \frac{s_0}{\sqrt{s_{0}^2+1}}\right\rvert \leq 1,\\
			\text{Hess}_{g_{0}}\phi (\partial_{s_{0}}, \partial_{s_{0}})= \partial_{s_{0}}^2 \phi&= \frac{1}{(s^2_{0}+1)^{\frac{3}{2}}}\leq 1.
		\end{align*}
		Also, 
		\begin{equation*}
			\text{Hess}_{g_{0}}\phi\left(\frac{X_{1}}{\lvert X_{1} \rvert_{g_{0}}}, \frac{X_{1}}{\lvert X_{1} \rvert_{g_{0}}}\right)=\text{Hess}_{g_{0}}\phi\left(\frac{X_{2}}{\lvert X_{2} \rvert_{g_{0}}}, \frac{X_{2}}{\lvert X_{2} \rvert_{g_{0}}}\right)= \frac{s_{0}b_{s_{0}}}{b\sqrt{s_{0}^2+1}} \leq \frac{b_{s_{0}}}{b},
		\end{equation*}
		is bounded by Lemma \ref{lemma}. Similarly, 
		\begin{equation*}
			\text{Hess}_{g_{0}}\phi(\frac{X_{3}}{\lvert X_{3} \rvert_{g_{0}}}, \frac{X_{3}}{\lvert X_{3} \rvert_{g_{0}}})= \frac{s_{0}c_{s_{0}}}{\sqrt{s_{0}^2+1}c} \leq \frac{s_{0}c_{s_{0}}}{c},
		\end{equation*}
		is bounded by Lemma \ref{lemma} and that $c_{s_0}\rightarrow1$ as $s\rightarrow 0$. Therefore $\phi$ is a smooth distance-like function on $(\mathbb{CP}^2\setminus \{\mathrm{pt}\},g(0))$. Since $b_{s_{0}},c_{s_{0}}\geq 0$, we have $\text{Hess}_{g_{0}}\phi \geq 0$. Thus if we choose $B_{0}=0$ in \cite[Proposition B.10]{Lott}, the proof is complete. 
	\end{proof}
	
	Lemmas \ref{u}, \ref{b_sc_s} and Theorem \ref{curvdecaypre} show:
	
	\begin{corollary}\label{GAFCpre}
		Let $(M,g(t))_{0 \leq t <T}$ be the maximal Ricci flow solution starting at some $g_{0} \in G_{AF, \mathbb{CP}^2\setminus \{\mathrm{pt}\}}$. Then $g(t) \in G_{AF, \mathbb{CP}^2\setminus\{\mathrm{pt}\}}$ for all $t\in[0,T)$.
	\end{corollary}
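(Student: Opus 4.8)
The plan is to check that each of the three defining conditions in Definition \ref{G} is preserved along the flow, assembling the statements already established. First I would recall that any $g_0 \in G_{AF, \mathbb{CP}^2\setminus\{\mathrm{pt}\}}$ is complete and of bounded curvature, so (as recalled in Subsection \ref{GAFpre}) the maximal solution $g(t)$, $t \in [0,T)$, exists, is unique in the class of complete bounded-curvature metrics, and has curvature bounded on every compact subinterval $[0,T'] \subset [0,T)$. By diffeomorphism invariance of the Ricci flow the solution stays $U(2)$-invariant, so it can be written in the form (\ref{rf}) with $b(\cdot,t)$, $c(\cdot,t)$ smooth and satisfying Conditions \ref{0(-1)cond}; in particular each $g(t)$ is a complete metric of the form (\ref{eq:1}) on $\mathbb{CP}^2\setminus\{\mathrm{pt}\}$, so membership in $G_{AF, \mathbb{CP}^2\setminus\{\mathrm{pt}\}}$ reduces to verifying conditions i)--iii).

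Next, I would fix $T' < T$ and work on $[0,T']$, where the curvature is bounded so that the hypotheses of Lemmas \ref{u} and \ref{b_sc_s} are satisfied. Since $g_0$ satisfies $u \le 1$, Lemma \ref{u} gives $u \le 1$ on $[0,T']$; since $g_0$ satisfies $b_s, c_s \ge 0$, Lemma \ref{b_sc_s} gives $b_s, c_s \ge 0$ on $[0,T']$. For condition iii), I would apply Theorem \ref{curvdecaypre} with $p_0$ a point on the bolt and with the exponent $\epsilon > 0$ furnished by $g_0 \in G_{AF, \mathbb{CP}^2\setminus\{\mathrm{pt}\}}$, obtaining a finite $\alpha(T')$ with $\sup_{p}(d_{g_t}(p_0,p))^{2+\epsilon}\lvert Rm_{g(t)}\rvert_{g(t)}(p) \le \alpha(T')$ for all $t \in [0,T']$; in particular the weighted curvature supremum is finite for each such $t$. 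Since $T' < T$ was arbitrary, conditions i)--iii) hold for every $t \in [0,T)$, whence $g(t) \in G_{AF, \mathbb{CP}^2\setminus\{\mathrm{pt}\}}$.

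Because the real content is already contained in Lemmas \ref{u}, \ref{b_sc_s} and --- most substantially --- Theorem \ref{curvdecaypre} (whose proof adapts the distance-like function argument of \cite[Lemma 3.1]{FraNUT} via \cite[Proposition B.10]{Lott}), I do not expect any genuine obstacle at this step; the only point requiring a little care is the passage from the fixed-interval statements on each $[0,T']$ to the claim on all of $[0,T)$, which is harmless since conditions i) and ii) are qualitative and for iii) only finiteness of the weighted curvature supremum --- not the particular value $\alpha(T')$ --- is needed for membership in $G_{AF, \mathbb{CP}^2\setminus\{\mathrm{pt}\}}$.
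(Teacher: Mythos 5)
Your proof is correct and follows exactly the paper's route: the paper derives Corollary \ref{GAFCpre} directly from Lemmas \ref{u}, \ref{b_sc_s} and Theorem \ref{curvdecaypre}, just as you do, with each covering one of the three defining conditions. Your additional remarks about working on $[0,T']$ for arbitrary $T'<T$ and about the flow remaining in diagonal $U(2)$-invariant form are the right minor details to make the one-line deduction rigorous.
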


	\subsubsection{Conservation of mass}
	Lemma \ref{masswelldfiendforFra} showed the notion of the mass of a metric in $G_{AF, \mathbb{R}^4}$ is well defined. Here we will show the same for metrics in $G_{AF, \mathbb{CP}^2\setminus\{\text{pt}\}}$.
	The proof of \cite[Lemma 2.4]{FraNUT} for metrics on $\mathbb{R}^4$ only relies on the properties of the metric at infinity and so carries over exactly to prove the following analogue on $\mathbb{CP}^2\setminus\{\text{pt}\}$.
	
	\begin{lemma} \label{asmptoticform}
		Let $g$ be a metric on $\mathbb{CP}^2\setminus\{\mathrm{pt}\}$ of the form (\ref{eq:1}) such that for some $\epsilon>0$,
		\begin{equation}
			\sup_{p\in \mathbb{CP}^2\setminus\{ \mathrm{pt}\}} \left(d_{g}(bolt,p)\right)^{2+\epsilon} \left\lvert Rm_{g} \right\rvert _{g}(p) < \infty.
		\end{equation}
		Then one of the following holds:
		\begin{enumerate}[i)]
			\item $b_{s} \rightarrow 2, c_{s} \rightarrow 0, c \rightarrow m_{g}^{-1} \in (0, \infty)$ as $s\rightarrow \infty$.
			\item $b_{s} \rightarrow 1, c_{s} \rightarrow 1$ as $s\rightarrow \infty.$
		\end{enumerate}
	\end{lemma}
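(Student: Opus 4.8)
The plan is to run the proof of \cite[Lemma 2.4]{FraNUT} essentially unchanged: as the excerpt notes, that argument uses only the behaviour of $b,c$ as $s\to\infty$ together with the three $G_{AF}$ conditions, never the boundary data at $s=0$, so it applies verbatim on $\mathbb{CP}^2\setminus\{\mathrm{pt}\}$. First I would turn the hypothesis into pointwise decay. Since the coordinate $s$ is arclength along the radial geodesics, $d_g(\mathrm{bolt},p(s))=s$, so the assumption gives $|Rm_g|_g(s)=O(s^{-2-\epsilon})$ as $s\to\infty$; in particular $|Rm_g|_g$ is bounded, so Lemma \ref{lemma} is available.

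The next step is to bootstrap the merely exponential bounds of Lemma \ref{lemma} up to \emph{linear} growth of $b$. The sectional curvature of the plane spanned by $X_1,X_2$ is $(4-3u^2-b_s^2)/b^2$ by (\ref{sectcurv}), so $|4-3u^2-b_s^2|\le b^2|Rm_g|_g$ and in particular $b_s^2\le 4+b^2|Rm_g|_g\le 4+Mb^2s^{-2-\epsilon}$ for $s$ large. Writing $\psi=b/s$, so $s\psi'=b_s-\psi$, this yields $\psi'+\tfrac{1}{2s}\psi\le\tfrac{2}{s}$ for $s$ large; multiplying by the integrating factor $s^{1/2}$ and integrating shows $\psi$ is bounded, i.e. $b(s)=O(s)$, and hence $c=ub\le b=O(s)$ as well. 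Feeding this back, $b^2|Rm_g|_g=O(s^{-\epsilon})\to0$, while $|b_{ss}|\le b|Rm_g|_g=O(s^{-1-\epsilon})$ and $|c_{ss}|\le c|Rm_g|_g=O(s^{-1-\epsilon})$ by (\ref{sectcurv}); these are integrable, so $b_s\to L$ and $c_s\to L'$ for some $L,L'\ge0$.

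I would then read off the algebraic constraints on $L,L'$ from the fact that the spherical sectional curvatures decay strictly faster than $s^{-2}$. From $R_{1221}$: $|4-3u^2-b_s^2|\le b^2|Rm_g|_g\to0$, so $u^2\to\tfrac{1}{3}(4-L^2)=:u_\infty^2$, forcing $L\le2$; and $u\le1$ forces $L\ge1$, so $b\to\infty$. From the components $R_{1331}=R_{2332}$ one gets (when $u_\infty>0$) a second relation $u^2-b_sc_su^{-1}\to0$, while L'H\^opital gives $u_\infty=\lim(c/b)=L'/L$. If $u_\infty>0$ these combine to $u_\infty=L^2$ and $L^2=4-3u_\infty^2$, whose only solution in $(0,1]$ is $u_\infty=L=L'=1$ — this is case ii). If instead $u_\infty=0$, then $L=2$ and $b_s\to2$, and the $R_{1331}$ bound now reads $b_sc_s/u=O(s^{-\epsilon})$, hence $c_s/c=O(s^{-1-\epsilon})$ is integrable; since $c$ is positive and non-decreasing this gives $c\to m_g^{-1}\in(0,\infty)$ and $c_s\to0$ — this is case i).

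The fussiest step should be the bootstrap to linear growth of $b$ (and, in case i), the parallel clean-up $c\to m_g^{-1}$): the a priori bounds from Lemma \ref{lemma} are only exponential, so one must be careful to run the Gr\"onwall/continuity argument so as to genuinely exploit the curvature decay $|Rm_g|_g=O(s^{-2-\epsilon})$, $\epsilon>0$. Once $b=O(s)$ is secured the remainder is the elementary algebra above, and since no part of it refers to the bolt or to the $s\to0^+$ boundary data, the proof coincides with that of \cite[Lemma 2.4]{FraNUT}.
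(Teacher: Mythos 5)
Your approach matches the paper's: the paper's own ``proof'' is the one-sentence observation that the argument of \cite[Lemma 2.4]{FraNUT} depends only on the $s\to\infty$ asymptotics of $(b,c)$ and hence transports verbatim from $\mathbb{R}^4$ to $\mathbb{CP}^2\setminus\{\mathrm{pt}\}$, and you correctly identify this and set out to reconstruct the argument that must be in \cite{FraNUT}. The skeleton of your reconstruction is right: pointwise curvature decay, a bootstrap to $b=O(s)$, integrability of $b_{ss},c_{ss}$ forcing $b_s\to L$ and $c_s\to L'$, and then the algebraic constraints from the spherical sectional curvatures together with L'H\^opital isolating the two cases.

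Two places need tightening. First, the claimed differential inequality $\psi'+\tfrac{1}{2s}\psi\le\tfrac{2}{s}$ does not actually follow from $b_s^2\le 4+Mb^2s^{-2-\epsilon}$ unless one already knows $\psi=b/s\lesssim s^{\epsilon}$, so as written this step is circular; a clean fix is to use $b_s\le 2+\sqrt{M}\,b\,s^{-1-\epsilon/2}$ and multiply by the bounded integrating factor $\exp\bigl(\tfrac{2\sqrt{M}}{\epsilon}s^{-\epsilon/2}\bigr)$, which yields $b=O(s)$ directly. Second, in case i) the assertion $b_sc_s/u=O(s^{-\epsilon})$ tacitly assumes $u^2=O(s^{-\epsilon})$, which is not immediate from what precedes it; it does hold, but via the chain $|b_{ss}|\le b\,|Rm|=O(s^{-1-\epsilon})\Rightarrow b_s=2+O(s^{-\epsilon})\Rightarrow b_s^2=4+O(s^{-\epsilon})$, and then the $K_{12}$ relation $b_s^2+3u^2=4+O(s^{-\epsilon})$ gives $u^2=O(s^{-\epsilon})$. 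You should make this explicit before concluding $c_s/c=O(s^{-1-\epsilon})$. Finally, note that as printed the lemma only hypothesises the curvature-decay condition, whereas your argument (and presumably that of \cite{FraNUT}) also uses $u\le 1$ and $b_s,c_s\ge 0$; that looks like an omission in the statement's hypotheses rather than a problem with your reasoning, but it is worth being aware that these are genuinely needed.
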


	\begin{definition}
		For a metric $g\in G_{AF, \mathbb{CP}^2\setminus\{ \mathrm{pt}\}}$, we define $m_{g}=\lim_{s \rightarrow \infty}\frac{1}{c(s)}$, where in the case of ii) of Lemma \ref{asmptoticform}, $m_g\coloneqq0$.
	\end{definition}
	Again, the proof of mass conversation for Ricci flow in $G_{AF,\mathbb{R}^4}$ \cite[Corollary 3.2]{FraNUT} only relies on the properties of the metric at infinity and so also proves the conservation of mass for Ricci flow in $G_{AF, \mathbb{CP}^2\setminus\{\mathrm{pt}\}}$.
	\begin{corollary} \label{massconv}
		Let $(M=\mathbb{CP}^2\setminus\{\mathrm{pt}\},g(t))_{0 \leq t <T}$ be the maximal Ricci flow solution starting at some $g_{0} \in G_{AF, \mathbb{CP}^2\setminus\{\mathrm{pt}\}}$ with mass $m_{g_{0}}$. Then $m_{g(t)}=m_{g_0}$ for all $0 \leq t <T$. 
	\end{corollary}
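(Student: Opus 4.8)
The plan is to run the proof of \cite[Corollary 3.2]{FraNUT} essentially verbatim, exploiting the fact that that argument takes place entirely in the region where the radial coordinate is large and therefore does not distinguish the end of $\mathbb{R}^4$ from the end of $\mathbb{CP}^2\setminus\{\mathrm{pt}\}$; the compact core near the bolt never enters it. First I would fix a time-independent radial coordinate $r$, so that $b=b(r,t)$ and $c=c(r,t)$ satisfy the system (\ref{riccifloweqaforbc}), and observe that it suffices to prove that $\lim_{s\to\infty}c(\cdot,t)\in(0,\infty]$ --- which exists and is monotone in $s$ because $c_s\ge0$ --- is independent of $t$. By Corollary \ref{GAFCpre} we have $g(t)\in G_{AF,\mathbb{CP}^2\setminus\{\mathrm{pt}\}}$ for every $t\in[0,T)$, so Lemma \ref{asmptoticform} applies to each time slice: for each $t$, either $c(\cdot,t)\to m_{g(t)}^{-1}\in(0,\infty)$ with $b\to\infty$, $b_s\to2$, $c_s\to0$ as $s\to\infty$, or else $b_s\to1$, $c_s\to1$ (so $b,c\to\infty$) and $m_{g(t)}=0$.

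Next, I would fix $T'<T$ and use Theorem \ref{curvdecaypre} to produce $\alpha(T')<\infty$ with $\lvert\mathrm{Rm}_{g(t)}\rvert_{g(t)}(p(s))\le\alpha(T')\,s^{-2-\epsilon}$ for all $s>0$ and $t\in[0,T']$ (using $d_{g(t)}(p_0,p(s))\ge s$ for $p_0$ on the bolt). Combining this curvature decay with the asymptotics above and the identity $c_{ss}=-R_{0330}/c$ read off from (\ref{sectcurv}), one checks that each term on the right-hand side of $c_t=c_{ss}+2b_sc_s/b-2c^3/b^4$ tends to $0$ as $r\to\infty$, uniformly in $t\in[0,T']$: indeed $\lvert c_{ss}\rvert\le\alpha(T')s^{-2-\epsilon}/c$ with $c$ bounded below by a positive constant once $s$ is bounded below (by continuity and positivity of $c$), $b_s$ stays bounded while $c_s\to0$ and $b\to\infty$, and $c^3/b^4\to0$ since $b$ grows at least linearly. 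Hence, setting $\eta(r):=\sup_{\tau\in[0,T']}\lvert c_t(r,\tau)\rvert$, we have $\eta(r)\to0$ as $r\to\infty$, and
\begin{equation*}
	\bigl\lvert c(r,t)-c(r,0)\bigr\rvert=\Bigl\lvert\int_0^t c_t(r,\tau)\,d\tau\Bigr\rvert\le T'\,\eta(r)\xrightarrow[\;r\to\infty\;]{}0 .
\end{equation*}
Letting $r\to\infty$ yields $\lim_{s\to\infty}c(\cdot,t)=\lim_{s\to\infty}c(\cdot,0)$ in $(0,\infty]$, i.e.\ $m_{g(t)}=m_{g_0}$; since $T'<T$ was arbitrary this would finish the proof.

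The step I expect to be the main obstacle is the \emph{uniformity} in $t\in[0,T']$ of the decay of $c_t$ at spatial infinity --- equivalently, the continuity of $t\mapsto\lim_{s\to\infty}c(\cdot,t)$ that legitimises interchanging this limit with the time integral above. This is exactly the point at which one must feed in the $t$-uniform curvature bound $\alpha(T')$ of Theorem \ref{curvdecaypre} together with $t$-uniform versions of the estimates of Lemma \ref{lemma}, and it is the part of \cite[Corollary 3.2]{FraNUT} that requires genuine work; I would simply import that argument. The only thing new here is the observation that all of its ingredients --- the evolution system (\ref{riccifloweqaforbc}), the asymptotic dichotomy of Lemma \ref{asmptoticform}, and the curvature decay of Theorem \ref{curvdecaypre} --- are already established on $\mathbb{CP}^2\setminus\{\mathrm{pt}\}$ and are insensitive to the geometry near the bolt, so that the argument of \cite{FraNUT} carries over unchanged.
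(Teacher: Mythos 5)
Your proposal matches the paper's own treatment: the paper simply observes that the proof of \cite[Corollary~3.2]{FraNUT} only depends on the behaviour of $b,c$ near spatial infinity and therefore transfers verbatim from $\mathbb{R}^4$ to $\mathbb{CP}^2\setminus\{\mathrm{pt}\}$, which is precisely the reduction you make. Your write-up additionally unpacks the ingredients of the imported argument (time-uniform decay of $c_t$ via Theorem~\ref{curvdecaypre}, the dichotomy of Lemma~\ref{asmptoticform}, and the integration in time), but the underlying approach is the same.
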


	\subsection{Curvature bounds in $G_{AF, \mathbb{CP}^2\setminus\{ \mathrm{pt}\}}$}
	The purpose of this subsection is to prove the following:
	\begin{theorem}\label{curvblowup}
		Let $(\mathbb{CP}^2\setminus\{ \mathrm{pt}\},g(t))$ for $[0,T)$ be a complete Ricci flow with $g(0) \in G_{AF, \mathbb{CP}^2\setminus\{ \mathrm{pt}\}}$ with a finite time singularity at $t=T$ and the curvature blowing up at the bolt. Then there exists an $0\leq R<\infty$ (wrt to $g(0)$) 
		$$\{p\in M \text{ such that } \sup_{[0,T)}\left\lvert Rm_{g(t)} \right\rvert_{g(t)}(p)=\infty\}= \{p\in M \text{ such that } d_{g(0)}(bolt,p)\in I_R\},$$
		where $I_R$ is the either the interval $[0,R]$ or $[0,R)$.
	\end{theorem}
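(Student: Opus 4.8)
The plan is to deduce Theorem \ref{curvblowup} from two facts: the curvature estimate (\ref{cest}), $c^{2}(s)\lvert\mathrm{Rm}_{g(t)}\rvert_{g(t)}(s)\le C$ uniformly in $t$, from the preceding Proposition, and the preservation of $c_{s}\ge 0$ under the flow (Lemma \ref{b_sc_s}), i.e.\ monotonicity of $c$ along radii. Fix a radial coordinate $r>0$ that is constant in time, write $s=s(r,t)$, and put $B\coloneqq\{\,p:\sup_{[0,T)}\lvert\mathrm{Rm}_{g(t)}\rvert_{g(t)}(p)=\infty\,\}$. By (\ref{cest}), $\lvert\mathrm{Rm}_{g(t)}\rvert(p(r))\le C/c(r,t)^{2}$; so if the Hopf fibre over $p(r)$ does not collapse, i.e.\ $\ell(r)\coloneqq\inf_{t\in[0,T)}c(r,t)>0$, then $p(r)\notin B$. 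Monotonicity gives $c(r_{1},t)\le c(r_{2},t)$ for all $t$ whenever $r_{1}<r_{2}$, hence $\ell(r_{1})\le\ell(r_{2})$, so $\{\,r:\ell(r)>0\,\}$ is upward-closed. If one also knows the converse---that $p(r)\notin B$ forces $\ell(r)>0$ (see below)---then $B^{c}=\{\,r:\ell(r)>0\,\}$ is upward-closed, so $B$ is $\{\,r<r_{0}\,\}$ or $\{\,r\le r_{0}\,\}$ for some $r_{0}\ge 0$; since $r\mapsto d_{g(0)}(\mathrm{bolt},p(r))$ is continuous and strictly increasing, $B=\{\,p:d_{g(0)}(\mathrm{bolt},p)\in I_{R}\,\}$ with $I_{R}=[0,R)$ or $[0,R]$ and $R=d_{g(0)}(\mathrm{bolt},p(r_{0}))$. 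The bolt lies in $B$ by hypothesis, giving $R\ge 0$; and $R<\infty$ because for $r$ large $g(0)$ is asymptotically flat or asymptotically conical (Lemma \ref{asmptoticform}), so pseudolocality applied at a correspondingly large scale keeps the curvature bounded---and, by (\ref{cest}), the fibres bounded below in size---on $\{\,d_{g(0)}(\mathrm{bolt},\cdot)\ge R_{0}\,\}$ for all $t<T$, once $R_{0}$ is large.

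For the converse, suppose $\sup_{[0,T)}\lvert\mathrm{Rm}_{g(t)}\rvert(p(r))=K<\infty$. In an orthonormal frame, the components (\ref{sectcurv}) are bounded by $K$ at $p(r)$; this yields $\lvert c_{ss}\rvert\le C(K)\,c$ there, and, together with $u\le 1$, $b_{s},c_{s}\ge 0$ (Lemmas \ref{u}, \ref{b_sc_s}) and Lemma \ref{lemma}, also $c^{3}/b^{4}=c\,u^{2}/b^{2}\le C(K)\,c$. Substituting into the flow equation (\ref{riccifloweqaforbc}) for $c$ and using $b_{s}c_{s}\ge 0$ to drop the middle term, we get $\partial_{t}c\rvert_{r}\ge c_{ss}-2c^{3}/b^{4}\ge -C(K)\,c$, hence $c(r,t)\ge c(r,0)\,e^{-C(K)t}\ge c(r,0)\,e^{-C(K)T}>0$, the last step because $g(0)$ is smooth and $r>0$ (so $c(r,0)>0$, a vanishing fibre at an interior point being a topology change). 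Thus $\ell(r)>0$; chaining with monotonicity and (\ref{cest}) as above, $p(r)\notin B$ implies $\ell(r')\ge\ell(r)>0$, hence $p(r')\notin B$, for all $r'>r$, so $B^{c}$ is upward-closed and the deduction is complete.

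It remains to prove (\ref{cest}), the main analytic input. I would argue by a parabolic maximum principle applied to $c^{2}\lvert\mathrm{Rm}_{g(t)}\rvert_{g(t)}$---or, to avoid the square root, to $c^{2}$ times each of the five orthonormal-frame curvature components derived from (\ref{sectcurv}). Using the Ricci flow system (\ref{riccifloweqaforbc}) one derives, for each such scale-invariant quantity $Q$, an inequality $\partial_{t}Q\le\Delta_{g(t)}Q+(\text{reaction terms})$ with reaction terms controlled by $u\le 1$, $b_{s},c_{s}\ge 0$ and Lemma \ref{lemma}. To run the maximum principle on the noncompact $\mathbb{CP}^{2}\setminus\{\mathrm{pt}\}$ one uses that $g(t)\in G_{AF,\mathbb{CP}^{2}\setminus\{\mathrm{pt}\}}$ for all $t<T$ (Corollary \ref{GAFCpre}), the preserved spatial curvature decay (Theorem \ref{curvdecaypre}), and the asymptotics of Lemma \ref{asmptoticform}, which together with the $2+\epsilon$ decay rate built into the definition of $G_{AF,\mathbb{CP}^{2}\setminus\{\mathrm{pt}\}}$ force $Q\to 0$ at spatial infinity; a cutoff or Omori--Yau argument then applies on each slab $[0,T']$ with constant independent of $T'<T$, yielding (\ref{cest}). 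I expect this to be the principal difficulty: establishing the correct sign of the reaction terms in the evolution of these scale-invariant curvature quantities---which is where the structural hypotheses $u\le 1$, $b_{s},c_{s}\ge 0$ and the exact form of (\ref{sectcurv}) must be exploited---and making the maximum principle rigorous on a complete noncompact manifold right up to the singular time. A smaller point is that Lemma \ref{lemma} is phrased with a global curvature bound whereas the converse step applies it pointwise; verifying this (equivalently, bounding $u^{2}/b^{2}$ at $p(r)$ uniformly in $t$ from a pointwise curvature bound, using for instance that $b_{s}\to 0$ near the bolt) is an additional check.
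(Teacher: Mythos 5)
Your proposal follows essentially the same route as the paper: the estimate (\ref{cest}) of Proposition~\ref{cto0}, the monotonicity $c_s\ge 0$ preserved by Lemma~\ref{b_sc_s}, the pointwise-in-$r$ convergence of $c$ as $t\nearrow T$, and a boundedness argument to get $R<\infty$. Your sketch of how to establish (\ref{cest}) by applying a maximum principle to scale-invariant quantities of the form $c^2\cdot(\text{curvature component})$, exploiting $u\le 1$ and $b_s,c_s\ge 0$, is exactly the strategy of Lemmas~\ref{b_sc_sbound}, \ref{k_12}, \ref{k_01}. Where you diverge slightly is in the finiteness of $R$: the paper uses the conservation of mass (Corollary~\ref{massconv}), which keeps $c(r,t)\to m_g^{-1}>0$ as $r\to\infty$ uniformly in $t$ and hence, via (\ref{cest}), directly rules out blow-up for $r$ large; you instead invoke pseudolocality. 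Both work, but your aside that (\ref{cest}) gives ``the fibres bounded below in size'' is backwards: $c^2\lvert\mathrm{Rm}\rvert\le C$ bounds $c$ \emph{below} only where $\lvert\mathrm{Rm}\rvert$ is \emph{large}, so it gives nothing from a curvature bound. You do not actually need that clause --- a curvature bound at large $r$ directly puts those points outside $B$.

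The one substantive addition in your write-up is the explicit converse ($p(r)\notin B\Rightarrow\ell(r)>0$) via $\partial_t c\rvert_r\ge -C(K)\,c$, and the gap you flag there is real and not merely cosmetic. Lemma~\ref{lemma}~iii) gives $u^2/b^2\le 5K/3$ under a \emph{global} bound $\lvert\mathrm{Rm}\rvert\le K$; it is obtained by integrating the curvature relations from the bolt outward, and a pointwise bound at $p(r)$ alone does not control $1/b^2$ at $p(r)$: from (\ref{sectcurv}), $K_{12}=(4-3u^2-b_s^2)/b^2$ only bounds $1/b^2$ in terms of $K$ once $b_s^2/b^2$ is controlled, which again requires information away from $p(r)$. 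So as written the estimate $c^3/b^4\le C(K)c$ is not justified by the hypotheses of your converse step, and it must either be replaced (for example by working with $b,c$ jointly, or by first localising the curvature bound to a neighbourhood of $p(r)$ and then appealing to a continuation argument there) or the claim re-derived from quantities that \emph{are} globally controlled along the flow (e.g.\ the bounds $b_s,c_s\le C$ of Lemma~\ref{b_sc_sbound} and $\lvert cb_{ss}\rvert,\lvert cc_{ss}\rvert\le C$ of Lemma~\ref{k_01}). It is worth noting that the paper's own proof of Theorem~\ref{curvblowup} simply asserts the equality of the blow-up set with an interval after establishing containment, without spelling out this converse direction, so you have correctly identified a point that deserves more care rather than less.
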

	
	\begin{remark}
		The fact that the curvature does not blow up everywhere follows simply from the pseudolocality result \cite{pseudolocality}.  Theorem \ref{curvblowup} gives us a more precise description of the set on which the curvature blows up, which will be sufficient to prove Theorem \ref{fullthm}.
	\end{remark}

	The next proposition is main result used in the proof of $\ref{curvblowup}$ and will show that, for Ricci flow in $G_{AF, \mathbb{CP}^2-\{ \mathrm{pt}\}}$ if the curvature blows up in finite time at a point $p$, then $c(p)$ must become arbitrarily small.

	\begin{proposition}\label{cto0}
		Let $(\mathbb{CP}^2\setminus\{\mathrm{pt}\},g(t))$ for $[0,T]$ be a complete Ricci flow with $g(0) \in G_{AF, \mathbb{CP}^2-\{ \mathrm{pt}\}}$. Then there exists $C>0$ such that 
		\begin{equation} \label{cest}
			c^2(s)\left\lvert \mathrm{Rm}_{g(t)} \right\rvert_{g(t)} (s) \leq C.
		\end{equation}
		all $t \in [0,T]$ and $s \geq 0$.
	\end{proposition}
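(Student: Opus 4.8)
\emph{Proof proposal.} The plan is to bound $Q\coloneqq c^4\lvert\mathrm{Rm}_{g(t)}\rvert^2_{g(t)}=\bigl(c^2\lvert\mathrm{Rm}_{g(t)}\rvert_{g(t)}\bigr)^2$ by a maximum principle whose constant depends only on $g(0)$; applying this on $[0,T']$ for every $T'<T$ and letting $T'\nearrow T$ then yields the stated estimate. The key structural input is a favourable evolution equation for $c^2$.

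Since $X_3=\partial_\psi$ generates a one-parameter subgroup of $U(2)$, it is a Killing field for every $g(t)$, and $c^2=\lvert X_3\rvert^2_{g(t)}$. Combining the Ricci flow equation, which gives $\partial_t\lvert X_3\rvert^2_{g(t)}=-2\,\mathrm{Ric}_{g(t)}(X_3,X_3)$ since $X_3$ is $t$-independent, with the Bochner identity for Killing fields $\Delta_{g(t)}\lvert X_3\rvert^2_{g(t)}=2\lvert\nabla X_3\rvert^2_{g(t)}-2\,\mathrm{Ric}_{g(t)}(X_3,X_3)$, one obtains
\[
\bigl(\partial_t-\Delta_{g(t)}\bigr)(c^2)=-2\lvert\nabla X_3\rvert^2_{g(t)}\le 0 .
\]
A direct computation in the frame $\partial_s,X_1,X_2,X_3$ expresses $\lvert\nabla X_3\rvert^2$ and $\lvert\nabla(c^2)\rvert^2$ as explicit polynomials in $c_s$ and $u$, so that $(\partial_t-\Delta)(c^4)=2c^2(\partial_t-\Delta)(c^2)-2\lvert\nabla(c^2)\rvert^2$ has a manifestly non-positive right-hand side.

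Combining this with the standard reaction-diffusion inequality $(\partial_t-\Delta)\lvert\mathrm{Rm}\rvert^2\le-2\lvert\nabla\mathrm{Rm}\rvert^2+c_0\lvert\mathrm{Rm}\rvert^3$ and the product rule, and absorbing the cross term $-2\nabla(c^4)\cdot\nabla\lvert\mathrm{Rm}\rvert^2$ into $-2c^4\lvert\nabla\mathrm{Rm}\rvert^2$ by Young's inequality, one reaches an evolution inequality for $Q$ whose right-hand side has the schematic form $-c^4\lvert\nabla\mathrm{Rm}\rvert^2+c_0\,c^4\lvert\mathrm{Rm}\rvert^3$ plus further terms, quadratic in $\mathrm{Rm}$ and coming from $(\partial_t-\Delta)(c^4)$, whose leading parts are explicitly non-positive. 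To run the maximum principle on $[0,T']\times M$ I would first rule out escape to spatial infinity: by Lemma \ref{asmptoticform} and conservation of mass (Corollary \ref{massconv}), together with the curvature decay condition defining $G_{AF,\mathbb{CP}^2\setminus\{\mathrm{pt}\}}$ made uniform in time on $[0,T']$ by Theorem \ref{curvdecaypre}, one has $Q(\cdot,t)\to 0$ as $s\to\infty$, uniformly for $t\in[0,T']$, so $\sup_{[0,T']\times M}Q$ equals either $\sup_M Q(\cdot,0)$ or is attained at an interior space-time point, where $\nabla Q=0$ and $(\partial_t-\Delta)Q\ge 0$.

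The crux, and the main obstacle, is to show that at such an interior maximum $Q$ cannot exceed a constant determined by $g(0)$. The bound $\mathrm{Rm}*\mathrm{Rm}*\mathrm{Rm}\le c_n\lvert\mathrm{Rm}\rvert^3$ alone is too lossy, since multiplying by $c^4$ does not defeat a genuinely cubic reaction term; one must instead compute $(\partial_t-\Delta)(c^4\lvert\mathrm{Rm}\rvert^2)$ precisely from the two-dimensional reduced system (\ref{riccifloweqaforbc}) and the explicit curvature components (\ref{sectcurv}). The point to extract is that the only parts of $\lvert\mathrm{Rm}\rvert^2$ carrying negative powers of $c$ are $c_{ss}^2/c^2$ and $(c_s/c-b_s/b)^2/(b^4c^2)$, and that after multiplication by $c^4$ and differentiation along the flow these produce, via the good gradient term $-c^4\lvert\nabla\mathrm{Rm}\rvert^2$ and the sink $-2\lvert\nabla X_3\rvert^2$ in the equation for $c^2$, negative contributions dominating the bad cubic terms once $Q$ is large. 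Carrying this out needs the conditions $u\le 1$ and $b_s,c_s\ge 0$ from the definition of $G_{AF,\mathbb{CP}^2\setminus\{\mathrm{pt}\}}$, together with the pointwise curvature estimates of Lemma \ref{lemma} (bounding $b$ from below and $b_s/b$, $u/b$, $c_s/c$ in terms of the curvature), precisely in order to certify that curvature concentration with $c$ bounded away from zero is impossible. A softer alternative would replace this computation by a point-picking/Cheeger--Gromov blow-up argument: were $Q$ unbounded, rescaling about a suitable almost-maximum point of $Q$ would produce a complete, non-flat, $\kappa$-noncollapsed ancient solution in which the $\psi$-circle has opened up to a line, and the scale invariance of $\lvert\nabla X_3\rvert^2$ together with the divergence of $c$ would force an isometric splitting $N^3\times\mathbb{R}$ with $N^3$ rotationally symmetric --- a configuration incompatible with the asymptotics imposed by $g(0)\in G_{AF,\mathbb{CP}^2\setminus\{\mathrm{pt}\}}$.
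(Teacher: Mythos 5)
Your structural observation that $c^2=\lvert X_3\rvert^2_{g(t)}$ satisfies $(\partial_t-\Delta)(c^2)=-2\lvert\nabla X_3\rvert^2_{g(t)}\le 0$ is correct and elegant, but the maximum-principle argument you build on it does not close, and you candidly flag the precise spot. At an interior space-time maximum of $Q=c^4\lvert\mathrm{Rm}\rvert^2$ with $Q=\alpha$, the cubic reaction term $c_0\,c^4\lvert\mathrm{Rm}\rvert^3=c_0\,\alpha\lvert\mathrm{Rm}\rvert=c_0\,\alpha^{3/2}/c^2$ carries an uncontrolled factor $c^{-2}$, while the sink coming from $\lvert\mathrm{Rm}\rvert^2(\partial_t-\Delta)(c^4)$ is $\lesssim-\alpha\,\lvert\nabla X_3\rvert^2/c^2$; there is no a priori reason for $\lvert\nabla X_3\rvert^2$ to be comparable to $\alpha^{1/2}$ at the maximum point, so the bad term is not absorbed. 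The gradient term $-2c^4\lvert\nabla\mathrm{Rm}\rvert^2$ and the cross term that Young's inequality leaves behind do not help with this either, since they are not sign-definite against a reaction term that is cubic in $\lvert\mathrm{Rm}\rvert$. You acknowledge that one must exploit cancellations in the reduced system, but you do not identify the correct quantities, and the alternative blow-up route you sketch is a different (and unverified) strategy.

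The paper's proof avoids the whole difficulty by never differentiating $c^4\lvert\mathrm{Rm}\rvert^2$: it first bounds $b_s$ and $c_s$ uniformly by separate scalar maximum principles on (\ref{b_sevolution}) and (\ref{c_sevolution}) (Lemma \ref{b_sc_sbound}), so that $c^2\lvert K_{12}\rvert$, $c^2\lvert K_{13}\rvert$, $c^2\lvert K_{23}\rvert$ are bounded purely algebraically from (\ref{sectcurv}) (Lemma \ref{k_12}); it then bounds $c^2\lvert K_{01}\rvert,c^2\lvert K_{02}\rvert,c^2\lvert K_{03}\rvert$ by running a maximum principle for the auxiliary quantities $f_{\pm}=cb_{ss}\mp\mu b_s^2\mp\nu c_s^2$ (and the analogue with $cc_{ss}$), using the already-established bounds on $b_s,c_s$ together with $u\le 1$, Young's inequality, and the boundary behaviour $c(0)=0$, $b_s(0)=0$, $c_s(0)=1$, and the curvature decay at infinity, to show the reaction term is negative at a first touching point (Lemma \ref{k_01}). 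This component-wise strategy keeps every maximum principle scalar, with reaction terms that are at worst quadratic in the monitored quantity and therefore absorbable; your route would need to rediscover exactly these decoupled quantities to make the ``precise computation'' you propose actually work. As written, the proposal has a genuine gap at the interior-maximum estimate.
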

	
	Proposition \ref{cto0} will follow from Lemmas \ref{b_sc_sbound}, \ref{k_12} and \ref{k_01} which will be proven by adapting the arguments used in \cite[Lemma 4.3, Lemma 4.10]{FraBer}, \cite[Lemma 7]{Dan1}. By Corollary \ref{GAFCpre}, $G_{AF, \mathbb{CP}^2-\{ \mathrm{pt}\}}$ is preserved and so properties $b_s,c_s\geq 0$ and $u\leq 1$ hold for all time, and will be used throughout the proofs of Lemmas \ref{b_sc_sbound}, \ref{k_12} and \ref{k_01} without comment.

	\begin{lemma}\label{b_sc_sbound}
		Let $(\mathbb{CP}^2\setminus\{\mathrm{pt}\},g(t))$ for $[0,T)$ be a complete Ricci flow with $g(0) \in G_{AF, \mathbb{CP}^2-\{ \mathrm{pt}\}}$. Then there exists $C(g(0))>0$ such that 
		\begin{equation*}
			b_{s},c_{s}\leq C.
		\end{equation*}
	\end{lemma}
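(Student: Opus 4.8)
The plan is to derive and then apply a maximum principle to the evolution equations for $b_s$ and $c_s$ under the Ricci flow, using the fact that $G_{AF, \mathbb{CP}^2\setminus\{\mathrm{pt}\}}$ is preserved (Corollary \ref{GAFCpre}), so that throughout $[0,T)$ we have $u = c/b \leq 1$ and $b_s, c_s \geq 0$. First I would commute $\partial_s$ with the evolution equations \eqref{riccifloweqaforbc} for $b$ and $c$; since $[\partial_t, \partial_s] = -\tfrac{a_t}{a}\partial_s$ and the Ricci flow gives $a_t/a$ in terms of curvature, this produces parabolic equations of the schematic form
\begin{equation*}
	\partial_t (b_s) = (b_s)_{ss} + (\text{drift})\cdot (b_s)_s + (\text{zeroth order})\cdot b_s + (\text{lower order in } c_s, u, 1/b),
\end{equation*}
and similarly for $c_s$, as already worked out in \cite{App1} and \cite{FraNUT}. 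The key structural point to extract is that the zeroth-order coefficients are controlled: using $u \leq 1$, the terms like $c^2/b^4 = u^2/b^2$ and $1/b^2$ have a favourable sign or are dominated, so that $b_s$ and $c_s$ cannot grow faster than a fixed exponential rate $e^{Ct}$ — or, better, stay bounded by a constant depending only on $g(0)$ once one exploits the monotonicity already in hand.

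The main steps in order: (1) record the commutator identity and write down the precise evolution equations for $b_s$ and $c_s$, citing the computations in \cite{App1}; (2) identify the bad (positive zeroth-order) terms and bound them using $u\leq1$, $b_s,c_s\geq0$, and the curvature estimates of Lemma \ref{lemma} — in particular $u^2/b^2 \leq 5K/3$ and $b_s^2/b^2 \leq 5K$, which are available as soon as the Ricci flow has bounded curvature on $[0,T']$; (3) form a suitable quantity such as $\max\{b_s, c_s\}$ or a linear combination, and apply the maximum principle on the noncompact manifold — here one must justify its use, either via the completeness-plus-bounded-curvature hypothesis (so that the Ecker–Huisken / standard noncompact maximum principle applies) or by the barrier $e^{Ct}\sup_{g(0)}(b_s + c_s)$, which is finite since $g(0)\in G_{AF,\mathbb{CP}^2\setminus\{\mathrm{pt}\}}$ forces $b_s \to 1$ or $2$ and $c_s \to 0$ or $1$ at infinity, hence $b_s, c_s$ are bounded at $t=0$; (4) conclude $b_s, c_s \leq C(g(0))$ on $[0,T)$, noting that the bound is uniform because $T < \infty$ absorbs the exponential factor.

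The main obstacle I expect is the justification of the maximum principle argument on the noncompact $\mathbb{CP}^2\setminus\{\mathrm{pt}\}$: one needs either a genuine a priori growth bound on $b_s, c_s$ at spatial infinity that persists in time (which is where the explicit asymptotics $b_s \to 2$ or $1$, $c_s \to 0$ or $1$ from Lemma \ref{asmptoticform} and the preservation result Corollary \ref{GAFCpre} are essential), or a careful localization using a cutoff and the distance-like function $\phi$ from the proof of Theorem \ref{curvdecaypre}. A secondary subtlety is handling the behaviour near the bolt $s = 0$, where the coordinate expression degenerates; there one should use the smooth extension guaranteed by Condition \ref{0(-1)cond} (so $b_s$ is smooth and odd, $c_s$ smooth and even, across $s=0$) so that $b_s, c_s$ genuinely extend as smooth bounded functions and no boundary term is picked up. Once these two points are handled, the estimate follows from the standard parabolic argument.
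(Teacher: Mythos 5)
There is a genuine gap. Your plan correctly identifies the starting point (the evolution equations $\partial_t b_s$, $\partial_t c_s$, taken from \eqref{riccifloweqaforbc} and \cite{App1}) and correctly flags the two technical issues (noncompactness and behaviour at $s=0$). But your proposed resolution in steps (2)--(3) --- bound the positive zeroth-order terms using the curvature estimates of Lemma~\ref{lemma} (e.g.\ $u^2/b^2\leq 5K/3$, $b_s^2/b^2\leq 5K$) and then run a maximum principle with an exponential barrier $e^{Ct}$, absorbing the factor since $T<\infty$ --- does not prove the lemma as stated. The constants in Lemma~\ref{lemma} depend on a curvature bound $K$, and such a $K$ is available only on compact time intervals $[0,T']$ with $T'<T$; it blows up as $T'\to T$ precisely because $T$ may be (and in the eventual application is) a singular time. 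Equally, the zeroth-order coefficient in $\partial_t b_s$ carries factors of $1/b^2$, and $b$ is not bounded below as $t\to T$, so the exponential growth constant $C$ in $e^{Ct}$ is not controlled by $g(0)$ alone. Your approach therefore produces a bound $C(g(0),T',K(T'))$ that degenerates as $T'\to T$, whereas the lemma claims $C(g(0))$ uniformly on $[0,T)$.

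The paper's argument avoids this entirely by exploiting the algebraic structure of the zeroth-order bracket, not its smallness. Since $b_s(0,t)=0$ and $b_s(\cdot,t)\to 2$ as $s\to\infty$ for each $t$ (by Condition~\ref{0(-1)cond}, Corollary~\ref{GAFCpre}, and Lemma~\ref{asmptoticform}), if $b_s$ were ever to exceed $\overline{\alpha}>\max\{\sup_M |b_s|(\cdot,0),\,2\}$ there would be a \emph{first touching time} $t_0$ and an interior spatial maximum $p_0$ with $b_s(p_0,t_0)=\overline{\alpha}$. Evaluating the evolution equation there, the bracket
$$-\overline{\alpha}c_s^2+4uc_s+\bigl(4\overline{\alpha}-\overline{\alpha}^3-6\overline{\alpha}u^2\bigr)$$
viewed as a quadratic in $c_s$ has discriminant $8u^2(2-3\overline{\alpha}^2)+4\overline{\alpha}^2(4-\overline{\alpha}^2)<0$ once $\overline{\alpha}>2$ (using only $u\leq1$), so the bracket is strictly negative \emph{regardless of $b$, $c_s$, $u$, or the curvature}. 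Hence $\partial_t b_s(p_0,t_0)<0$, contradicting the first-touching property. The same device with the cubic term $-2\overline{\alpha}b_s^2$ handles $c_s$. The key idea you are missing is that the dominant negative cubic term ($-b_s^3$, resp.\ $-2c_s b_s^2-6u^2 c_s$) makes the ODE part self-defeating at any level above $2$, so no curvature bound and no exponential factor are needed; the bound comes out uniform in $T$ and depends only on $g(0)$ and the number $2$.
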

	
	\begin{proof}
		First we prove the estimate $b_{s}\leq C$.
		The evolution equation for $b_{s}$ is 
		\begin{equation}\label{b_sevolution}
			\partial_{t} b_{s}=b_{sss}+\frac{c_{s}}{c}b_{ss}+\frac{1}{b^2}\left(-\frac{c_{s}^2b_{s}}{u^2}+4uc_{s}-6u^2b_{s}-b_{s}^3+4b_{s}\right).
		\end{equation}
		By Lemma \ref{asmptoticform}, $\lim_{s\rightarrow \infty} b_{s}(s)=2$. Also, $b_{s}(0)=0$ by the boundary Condition \ref{0(-1)cond}. Suppose that $b_{s}$ becomes unbounded as $t\rightarrow T$. Let $\overline{\alpha}>\sup_M\lvert b_{s}\rvert(\cdot,0)>0$ be some large constant to be chosen below. Then there exists a maximum point $(p_{0}, t_0)\in \mathbb{CP}^2-\{ \text{pt}\}\times (0,T)$ not contained in the bolt for $b_{s}(\cdot, t_{0})$ where $b_{s}=\overline{\alpha}$ for the first time. Using $\frac{c}{b}\leq1$ we evaluate (\ref{b_sevolution}) at $(p_{0},t_{0})$,
		\begin{equation*}
			\partial_{t}b_{s}(p_{0},t_{0}) \leq \frac{1}{b^2}(4\overline{\alpha}-\overline{\alpha}^3-\overline{\alpha}c_{s}^2-6\overline{\alpha}u^2+4uc_{s}).
		\end{equation*}
		Choose $\overline{\alpha}>\text{max}\{\sup_M\lvert b_{s}\rvert(\cdot,0),2 \}$. Then the polynomial 
		\begin{equation*}
			-\overline{\alpha}c_{s}^2+4uc_{s}+(4\overline{\alpha}-\overline{\alpha}^3-6\overline{\alpha}u^2)
		\end{equation*}
		in $c_{s}$ has discriminant:
		\begin{equation*}
			16u^2+4\overline{\alpha}(4\overline{\alpha}-\overline{\alpha}^3-6\overline{\alpha}u^2)=8u^2(2-3\overline{\alpha}^2)+4\overline{\alpha}^2(4-\overline{\alpha}^2)<0.
		\end{equation*}
		if $\overline{\alpha}>2$. Therefore, $\partial_{t}b_{s}(p_{0},t_{0})<0$, giving us a contradiction. Therefore $b_{s}\leq C$ for some $C(g(0))>0$ along the flow. 
		
		We now carry out a very similar proof to prove the estimate $c_{s}\leq C$. The evolution equation for $c_{s}$ is
		\begin{equation}\label{c_sevolution}
			\partial_{t}c_{s}=c_{sss}+\left(2\frac{b_{s}}{b}-\frac{c_{s}}{c}\right)c_{ss}+\frac{1}{b^2}(-2c_{s}b_{s}^2-6u^2c_{s}+8u^3b_{s}).
		\end{equation} 
		By Lemma \ref{asmptoticform} we know that $\lim_{s\rightarrow \infty}c_{s}(s)=0$. Also $c_{s}(0)=0$ by boundary Condition \ref{0(-1)cond}. Suppose that $c_{s}$ becomes unbounded as $t\rightarrow T$. Let $\overline{\alpha}>\sup_M\lvert c_{s}\rvert(\cdot,0)>0$ be some large constant to be chosen below. Then there exists a maximum point $(p_{0}, t_0)\in \mathbb{CP}^2-\{ \text{pt}\}\times (0,T)$ with $p_0$ not contained in the bolt for $c_{s}(\cdot, t_{0})$ where $c_{s}=\overline{\alpha}$ for the first time. Using $u\leq1$ we evaluate (\ref{c_sevolution}) at $(p_{0},t_{0})$,
		\begin{equation*}
			\partial_{t}c_{s}(p_{0},t_{0})\leq\frac{1}{b^2}\left(-\overline{\alpha}(6u^2+2b_{s}^2)+8u^3b_{s}\right).
		\end{equation*}
		Choose $\overline{\alpha}>\text{max}\{\sup_M\lvert c_{s}\rvert(\cdot,0),2 \}$. Then the polynomial 
		\begin{equation*}
			-2\overline{\alpha}b_{s}^2+8u^3b_{s}-2\overline{\alpha}u^2.
		\end{equation*}
		in $b_{s}$ has discriminant:
		\begin{equation*}
			64u^6-16u^2\overline{\alpha}^2=16u^2(4-\overline{\alpha}^2)
		\end{equation*}
		if $\overline{\alpha}>2$. Therefore, $\partial_{t}c_{s}(p_{0},t_{0})<0$, giving us a contradiction. Therefore $c_{s}\leq C$ for some $C(g(0))>0$ along the flow.
	\end{proof}
	
	\begin{lemma}\label{k_12}
		Let $(\mathbb{CP}^2\setminus\{ \mathrm{pt}\},g(t))$ for $[0,T)$ be a complete Ricci flow with $g(0) \in G_{AF, \mathbb{CP}^2-\{ \mathrm{pt}\}}$. Then there exists a $C(g(0))>0$ such that $c^2\lvert K_{12} \rvert,c^2\lvert K_{13} \rvert,c^2\lvert K_{23} \rvert \leq C$.
	\end{lemma}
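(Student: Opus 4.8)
The plan is to read the three sectional curvatures directly off (\ref{sectcurv}) and to observe that, once multiplied by $c^2$, each of them becomes a \emph{polynomial} in the quantities $u$, $b_s$, $c_s$ that are already controlled along the flow. Writing $K_{12},K_{13},K_{23}$ for the sectional curvatures of the $X_1X_2$-, $X_1X_3$- and $X_2X_3$-planes of $g(t)$, one divides the components listed in (\ref{sectcurv}) by the relevant norms $\lvert X_i\rvert^2$ of the frame vectors and uses $c^2=u^2b^2$ to obtain
\begin{equation*}
	c^2 K_{12} = u^2\left(4 - 3u^2 - b_s^2\right),\qquad c^2 K_{13} = c^2 K_{23} = u^2\left(u^2 - u^{-1}b_s c_s\right) = u^4 - u\,b_s c_s ,
\end{equation*}
the equality $K_{13}=K_{23}$ being immediate from $R_{2332}=R_{1331}$ in (\ref{sectcurv}). (As a sanity check on the normalisations, on the Ricci-flat metrics $g_{\mathrm{NUT},n}$, where $b_s=2-u$ and $c_s=u^2$, these reduce to $4u^3(1-u)$ and $-2u^3(1-u)$.)

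Next I would feed in the two facts already established. By Corollary \ref{GAFCpre} the class $G_{AF,\mathbb{CP}^2\setminus\{\mathrm{pt}\}}$ is preserved, so $0<u\le 1$ and $b_s,c_s\ge 0$ for every $t\in[0,T)$; and by Lemma \ref{b_sc_sbound} there is a constant $C_1=C_1(g(0))$ with $0\le b_s,c_s\le C_1$ on all of $[0,T)$. Plugging these into the displayed identities gives, uniformly in $s\ge 0$ and $t\in[0,T)$,
\begin{equation*}
	c^2\lvert K_{12}\rvert = u^2\,\bigl\lvert 4 - 3u^2 - b_s^2\bigr\rvert \le 7 + C_1^2,\qquad c^2\lvert K_{13}\rvert = c^2\lvert K_{23}\rvert = \bigl\lvert u^4 - u\,b_s c_s\bigr\rvert \le 1 + C_1^2 ,
\end{equation*}
so the lemma holds with $C = 7 + C_1^2$.

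In contrast with Lemma \ref{b_sc_sbound} and the companion curvature estimate of Lemma \ref{k_01}, no maximum-principle argument is needed here: the content is purely algebraic, the factor $c^2$ exactly absorbing the $b^{-2}$ that would otherwise appear (through $c^2/b^2=u^2$) and turning each sectional curvature into a bounded polynomial in $u\le 1$, $b_s$, $c_s$. The only point requiring care is the bookkeeping of the normalisations in (\ref{sectcurv}) — dividing by $\lvert X_1\rvert^2\lvert X_2\rvert^2=b^4$ in the $K_{12}$ case and by $\lvert X_1\rvert^2\lvert X_3\rvert^2=b^2c^2$ in the $K_{13}$ case before multiplying back by $c^2$ — which is what the Taub-NUT check pins down. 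Should the intended quantity carry a different scaling that reintroduces an unbounded factor, the fallback is the scheme used for Lemma \ref{b_sc_sbound}: derive the evolution equation $\partial_t\bigl(c^2K_{13}\bigr)=(c^2K_{13})_{ss}+(\cdots)(c^2K_{13})_s+\mathcal R$, note that $c^2K_{13}$ vanishes at the bolt (where $u=b_s=0$) and at spatial infinity (by Lemma \ref{asmptoticform}), so its spatial extrema are interior, and run Hamilton's maximum-principle trick; the main obstacle would then be to show $\mathcal R<0$ at a large interior maximum, which requires re-expressing the $b^{-2}$-weighted terms of $\mathcal R$ in terms of $c^2K_{13}$ itself together with the already-bounded quantities $c^2K_{12}$, $b_s$, $c_s$, $u$ — exactly the manoeuvre of \cite[Lemmas 4.3 and 4.10]{FraBer} and \cite[Lemma 7]{Dan1}.
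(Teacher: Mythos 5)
Your argument is essentially the paper's proof: read the three sectional curvatures off (\ref{sectcurv}), observe that the factor $c^2$ converts them into polynomials in $u\le 1$, $b_s$, $c_s$, and then invoke Corollary \ref{GAFCpre} and Lemma \ref{b_sc_sbound} — no maximum principle is needed, and the fallback you sketch is never used. One small remark: your expression $c^2 K_{13}=u^4-u\,b_sc_s$ (which your Taub-NUT check confirms, since Ricci-flatness forces $K_{03}=-2K_{13}$ and indeed $4u^3(1-u)=-2\cdot(-2u^3(1-u))$) is the correct one, whereas the paper's displayed formula $\frac{c^4}{b^4}-b_sc_s$ drops the factor of $u$ in the second term; as you note this is harmless for the bound since $u\le 1$, but it is a slip in the paper's version.
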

	\begin{proof}
		Let $C>0$ be the constant given by Lemma \ref{b_sc_sbound}, and $\overline{C}(g(0))>0$ a generic constant depending only on $g(0)$. Using (\ref{sectcurv}, it follows that
		\begin{equation*}
			c^2\lvert K_{12} \rvert =\frac{c^2}{b^2}(4-3\frac{c^2}{b^2}-b_{s}^2) \leq 4+3+C^2 \leq\overline{C}.
		\end{equation*}
		Also,
		\begin{equation*}
			c^2\lvert K_{13}\rvert=c^2\lvert K_{23}\rvert =\frac{c^4}{b^4}-b_{s}c_{s}\leq 1+C^2\leq \overline{C}.
		\end{equation*}
	\end{proof}
	Obtaining control of the remaining parts of the curvature will require a involved argument.
	\begin{lemma}\label{k_01}
		Let $(\mathbb{CP}^2\setminus\{ \mathrm{pt}\},g(t))$ for $[0,T)$ be a complete Ricci flow with $g(0) \in G_{AF, \mathbb{CP}^2-\{ \mathrm{pt}\}}$. Then there exists a constant $C(g(0))>0$ such that 
		\begin{equation*}
			c^2\lvert K_{01}\rvert ,c^2\lvert K_{02} \rvert, c^2\lvert K_{03} \rvert \leq C.
		\end{equation*}
	\end{lemma}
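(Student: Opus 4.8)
The plan is to run a maximum-principle argument for a $c^2$-weighted curvature quantity, adapting \cite[Lemmas 4.3, 4.10]{FraBer} and \cite[Lemma 7]{Dan1}. In the orthonormal coframe dual to $\partial_s,\ b^{-1}X_1,\ b^{-1}X_2,\ c^{-1}X_3$, formulae (\ref{sectcurv}) give $K_{01}=K_{02}=-b_{ss}/b$ and $K_{03}=-c_{ss}/c$, so it suffices to bound $Q_1\coloneqq -c^2 b_{ss}/b$ and $Q_3\coloneqq -cc_{ss}$, and their negatives, along the flow by a constant depending only on $g(0)$. The ingredients available are: $b_s,c_s\in[0,C(g(0))]$, $u\le 1$, and $b_s,c_s\ge 0$ for all $t\in[0,T)$ (Lemmas \ref{b_sc_sbound}, \ref{u}, \ref{b_sc_s} and Corollary \ref{GAFCpre}); and the bounds $c^2|K_{12}|,c^2|K_{13}|,c^2|K_{23}|\le C(g(0))$ already established in Lemma \ref{k_12}.

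First I would derive the evolution equations for $K_{01}$ and $K_{03}$ by differentiating (\ref{b_sevolution}) and (\ref{c_sevolution}) once in $s$ (keeping track of the commutator between $\partial_t$ and $\partial_s$ coming from the evolving arclength reparametrisation) and dividing by $b$, resp.\ $c$. This gives equations $\partial_t K_{0i}=\partial_s^2 K_{0i}+(\text{transport})+\mathcal R_i$, whose transport coefficients are built from $b_s/b,\,c_s/c$ and whose reaction $\mathcal R_i$ is polynomial in the curvature components and in $b_s/b,\,c_s/c$. Multiplying by $c^2$ and using the Leibniz rule to convert $c^2\partial_s^2 K_{0i}$ into $\partial_s^2(c^2 K_{0i})$ plus gradient and zeroth-order corrections yields a closed reaction–diffusion system for $(Q_1,Q_3)$. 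At an interior spatial maximum of $Q_i$ one has $\partial_s Q_i=0$ and $\partial_s^2 Q_i\le 0$; the first relation converts the gradient correction into a bounded multiple of $c_s Q_i/c$, leaving only the reaction to control.

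The crux — and the step I expect to be the main obstacle — is to choose a combination $Q\coloneqq Q_1+\kappa Q_3$ with $\kappa=\kappa(g(0))$ (equivalently, to run a vector-valued maximum principle on $(Q_1,Q_3)$) so that whenever $Q$ first exceeds a threshold $\bar\alpha=\bar\alpha(g(0))$ its reaction term at that point is strictly negative. The difficulty is that $\mathcal R_i$ contains products of the possibly unbounded sectional curvatures $K_{12},K_{13}$ with $K_{0i}$, so the weighted reaction contains terms like $K_{0i}\cdot(c^2 K_{12})$ in which $K_{0i}$ may itself be of size $\bar\alpha/c^2$; the resolution is that, by the $c^2$ weighting and the explicit signs in (\ref{sectcurv}) (in particular $u\le 1$, $b_s,c_s\ge 0$, and the term $4-3u^2-b_s^2$ entering $K_{12}$), these dangerous contributions cancel in the chosen combination or are dominated by negative terms of the same $c^{-2}$ scaling, so that once $\bar\alpha$ is taken larger than the Lemma \ref{k_12} constant and the other structural constants the reaction has the required sign. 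Verifying this cancellation from the precise form of $\mathcal R_1,\mathcal R_3$, and pinning down $\kappa$, is the heart of the computation.

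Finally I would handle the boundary of the space–time domain. At the bolt $s=0$ one has $c(0)=0$, and since $c$ extends to an odd function (Condition \ref{0(-1)cond}) also $c_{ss}(0)=0$, so $Q_1,Q_3\to 0$ there for every $t$. At spatial infinity, $g(t)\in G_{AF,\mathbb{CP}^2\setminus\{\mathrm{pt}\}}$ for each $t<T$ by Corollary \ref{GAFCpre}, so $|Rm_{g(t)}|(p)\to 0$ as $d_{g(t)}(bolt,p)\to\infty$; combined with Lemma \ref{asmptoticform} and $c_s\le C$ (so $c$ grows at most linearly while $b$ grows at least linearly), this gives $c^2|Rm_{g(t)}|\to 0$ as $s\to\infty$ for each fixed $t<T$. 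Hence, for every $t$, the spatial supremum of $|Q|$ is attained at an interior point whenever it is positive, which legitimises the maximum principle. The standard "first time a large threshold is reached" argument then yields $\sup_{[0,T)\times M}|Q|\le\max\{\sup_M|Q|(\cdot,0),\,C_0(g(0))\}$, and hence $c^2|K_{01}|,c^2|K_{02}|,c^2|K_{03}|\le C(g(0))$; together with Lemmas \ref{b_sc_sbound} and \ref{k_12} this completes the proof of Proposition \ref{cto0}.
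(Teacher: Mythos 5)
Your overall framework (maximum principle on $c$-weighted curvature quantities, with boundary control at the bolt and at spatial infinity) is the right kind of argument, but you have not supplied the specific device that makes the paper's argument close, and the step you identify as "the heart of the computation" is left unverified. That is exactly where the difficulty lies, so this is a genuine gap rather than a cosmetic omission.

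The paper does not run the maximum principle on $Q_1+\kappa Q_3$. It works with $f_{\pm}\coloneqq cb_{ss}\pm(\mu b_s^2+\nu c_s^2)$ (and analogously $cc_{ss}\pm(\mu b_s^2+\nu c_s^2)$). The auxiliary quadratic terms $\mu b_s^2+\nu c_s^2$ are essential: when the heat operator $\partial_t-\partial_s^2-\frac{c_s}{c}\partial_s$ is applied, the $b_s^2$ and $c_s^2$ pieces produce good definite-sign terms $+2\mu b_{ss}^2+2\nu c_{ss}^2$ (since $\partial_t(b_s^2)-\partial_s^2(b_s^2)=-2b_{ss}^2+\dots$), and these are exactly what absorb, via Young's inequality, the many cross terms in the reaction once $\mu,\nu$ are chosen large. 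A \emph{linear} combination $Q_1+\kappa Q_3$ of curvature-weighted quantities produces no such quadratic-in-$b_{ss}$, $c_{ss}$ terms; nothing in your sketch generates them, and there is no reason to expect a choice of $\kappa$ to make the reaction non-positive at a threshold. Your hoped-for cancellation from the "explicit signs in (\ref{sectcurv})" is precisely the content one would have to verify, and the paper achieves it by a structurally different mechanism. A secondary point: the paper bounds $cb_{ss}$, not $c^2 b_{ss}/b$, and only at the end uses $u\le1$ to conclude $\lvert c^2K_{01}\rvert=\lvert ucb_{ss}\rvert\le\lvert cb_{ss}\rvert$. Your $Q_1=-c^2b_{ss}/b$ carries an extra factor $u=c/b$, which introduces additional terms in the evolution equation without any corresponding gain. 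Your discussion of boundary behaviour at $s=0$ and $s\to\infty$ is fine in spirit, but note that the paper's $f_{\pm}$ do not tend to $0$ at $s=0$ (they tend to $\mp\nu$ since $c_s\to1$ there); what matters is boundedness at the spatial boundary, not vanishing.
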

	
	\begin{proof}
		We first prove the estimate $\lvert cb_{ss} \rvert \leq C$ along the flow, for some $C>0$ depending only on $g(0)$. We do this by finding a lower bound on the quantity $f_{-}\eqqcolon cb_{ss}-\mu b_{s}^2-\nu c_{s}^2$ and an upper bound for $f_{+}\eqqcolon cb_{ss}+\mu b_{s}^2+\nu c_{s}^2$, where $\mu, \nu>0$ are to be chosen below.
		The evolution equation for $b_{ss}$ is 
		\begin{align*}
			\partial_{t}b_{ss}&=b_{ssss}+\frac{c_{s}}{c}b_{sss}+\frac{4cc_{ss}}{b^3}-\frac{2c_{s}^2b_{ss}}{c^2}+\frac{2c_{s}^3b_{s}}{c^3}\\
			&\hspace{4mm}-\frac{24cc_{s}b_{s}}{b^4}+\frac{4c_{s}^2}{b^3}-\frac{2c_{s}c_{ss}b_{s}}{c^2}-\frac{6c^2b_{ss}}{b^4}+\frac{24c^2 b_{s}^2}{b^5}\\
			&\hspace{4mm}-\frac{2b_{ss}^2}{b}+\frac{4b_{ss}}{b^2}-\frac{8b_{s}^2}{b^3}-\frac{3b_{s}^2b_{ss}}{b^2}.
		\end{align*}
		Using the evolution equations (\ref{b_sevolution}), (\ref{c_sevolution}) for $b_{s},c_{s}$, we get the evolution equation for $f_{-}$
		\begin{equation}\label{f-evolution}
			\begin{split}
				\partial_{t}(f_{-})&=(f_{-})_{ss}-\frac{c_{s}}{c}(f_{-})_{s}-cb_{ss}\left(\frac{c_{s}^2}{c^2}+\frac{8u^2}{b^2}+3\frac{b_{s}^2}{b^2}+\frac{2\mu b_{s}c_{s}}{c^2}-\frac{2b_{s}c_{s}}{bc}\right)\\
				&\hspace{4mm}+\frac{4c^2c_{ss}}{b^3}+\frac{2c_{s}^3b_{s}}{c^2}-\frac{24c^2c_{s}^2b_{s}}{b^4}+\frac{4c_{s}^2 c}{b^3}-\frac{2c_{s}c_{ss}b_{s}}{c}+\frac{24c^3b_{s}^2}{b^5}-\frac{8b_{s}^2c}{b^3}\\
				&\hspace{4mm}-\frac{2\mu b_{s}}{b^2}\left(-\frac{c_{s}^2b_{s}b^2}{c^2}+\frac{4cc_{s}}{c}-6u^2b_{s}-b_{s}^3+4b_{s}\right)\\
				&\hspace{4mm}-2\nu c_{s}\left(2\left(\frac{b_{s}}{b}-\frac{c_{s}}{c}\right)c_{ss}+\frac{1}{b^2}\left(-2b_{s}^2c_{s}-6u^2c_{s}+8u^3b_{s}\right)\right)\\
				&\hspace{4mm}+2(\mu -u)b_{ss}^2+2\nu c_{ss}^2+\frac{cb_{ss}}{b^2}.
			\end{split}
		\end{equation}
		By the curvature decay preservation given by Theorem \ref{curvdecaypre}, we have 
		$$\lim_{s\rightarrow \infty}\lvert cb_{ss} \rvert \leq \lim_{s\rightarrow \infty} \lvert bb_{ss}\rvert= \lim_{s\rightarrow \infty}\lvert b^2k_{01} \rvert=0.$$ Also, 
		$$\lim_{s\rightarrow \infty}b_{s}=2, \lim_{s\rightarrow \infty}c_{s}=0, \lim_{s\rightarrow 0}b_{s}=0, \lim_{s\rightarrow 0}c_{s}=1 \text{ and }  \lim_{s\rightarrow 0} c=0,$$ by Lemma \ref{asmptoticform} and Condition \ref{0(-1)cond}. Suppose that $f_{-}$ becomes unbounded below as $t\rightarrow T$. Let $\overline{\alpha}>0$ is some large constant to be chosen below. Then there exists a minimum point $(p_{0}, t_0)\in \mathbb{CP}^2-\{ \text{pt}\}\times (0,T)$ with $p_0$ not contained in the bolt for $f_{-}(\cdot, t_{0})$ where $f_{-}=-\overline{\alpha}$ for the first time. Since $b_{s},c_{s}$ are bounded above by Lemma \ref{b_sc_sbound} we can choose $\overline{\alpha}$ large enough so that $cb_{ss}\leq -\frac{\overline{\alpha}}{2}$. 
		With the assumption that $\mu\geq 1$, evaluating (\ref{f-evolution}) at $(p_{0},t_{0})$ gives,
		\begin{align*}
			\partial_{t}f_{-}(p_{0},t_{0})&\geq \frac{\overline{\alpha}}{2}\left(\frac{c_{s}^2}{c^2}+\frac{8u^2}{b^2}+\frac{3b_{s}^2}{b^2}+\frac{2b_{s}c_{s}}{c}\left(\frac{\mu}{c}-\frac{1}{b}\right)\right)+\frac{4c^2 c_{ss}}{b^3}-\frac{24c^2 c_{s}^2 b_{s}}{b^4}-\frac{2c_{s}c_{ss}b_{s}}{c}\\
			&\hspace{4mm}-\frac{8b_{s}^2c}{b^3}-\frac{2\mu b_{s}}{b^2}\left(\frac{4cc_{s}}{b}+4b_{s}\right)-2\nu c_{s}\left(2\left(\frac{b_{s}}{b}-\frac{c_{s}}{c}\right)c_{ss}+\frac{8u^3b_{s}}{b^2}\right)\\
			&\hspace{4mm}+2(\mu-u)b_{ss}^2+2\nu c_{ss}^2+\frac{4cb_{ss}}{b^2}.
		\end{align*}
		Let $\alpha>0$ denote a uniform constant that may change from line to line. Using Young's inequality, $u\leq 1$, and the bounds on $b_{s},c_{s}$ of Lemma \ref{b_sc_sbound} we have the following bounds
		\begin{align*}
			&\frac{4c^2c_{ss}}{b^3}\leq 8u\frac{u^2}{b^2}+\frac{c_{ss}^2}{2}\leq \alpha\frac{u^2}{b^2}+\frac{c_{ss}^2}{2},\\
			&24\frac{c^2c_{s}^2b_{s}}{b^4}=\frac{\alpha c^2}{b^3}\frac{b_{s}}{b} \leq \alpha \frac{u^2}{b^2}+\frac{b{s}^2}{2b^2},\\
			&\frac{2c_{s}c_{ss}b_{s}}{c}\leq \alpha \frac{c_{ss}c_{s}}{c}\leq \alpha \frac{c_{s}^2}{c^2}+\frac{c_{ss}^2}{c^2},\\
			&8\frac{b_{s}^2c}{b^2}= \frac{\alpha b_{s}}{b}\frac{c}{b^2}\leq \alpha\frac{b_{s}^2}{b^2}+\frac{u^2}{b^2},\\
			&8\frac{\mu c b_{s}c_{s}}{b^3}\leq 8\frac{\mu b_{s}}{b}\frac{c_{s}}{c}\leq \alpha \mu (\frac{b_{s}^2}{b^2}+\frac{c_{s}^2}{c^2}),\\
			&4\nu \frac{b_{s}^2}{b^2}c_{ss}\leq \alpha \frac{b_{s}^2}{b^2}+ \frac{c_{ss}^2}{2},\\
			&4\nu \frac{c_{s}^2c_{ss}}{c} \leq \alpha \frac{c_{s}^2}{c^2}+ \frac{c_{ss}^2}{2},\\
			&16\nu \frac{u^3 b_{s}c_{s}}{b^2}\leq \alpha \frac{u^2}{b^2}+\frac{b_{s}^2}{2b^2},\\
			&4\frac{cb_{ss}}{b^2}\leq \alpha \frac{u^2}{b^2}+\frac{b_{ss}^2}{2}.
		\end{align*}
		Therefore, for some constant $\beta(\mu ,\nu)>0$ we have
		\begin{align*}
			\partial_{t}f_{-}(p_{0},t_{0})&\geq \frac{\overline{\alpha}}{2}\left(\frac{c_{s}^2}{c^2}+\frac{8u^2}{b^2}+\frac{3b_{s}^2}{b^2}+\frac{2b_{s}c_{s}}{c}\left(\frac{\mu}{c}-\frac{1}{b}\right)\right)\\
			&\hspace{4mm}-\beta(\mu,\nu)\left(\frac{c_{s}^2}{c^2}+\frac{8u^2}{b^2}+\frac{3b_{s}^2}{b^2}+\frac{2b_{s}c_{s}}{c}\left(\frac{\mu}{c}-\frac{1}{b}\right)\right)\\
			&\hspace{4mm}+(\mu-\alpha)b_{ss}^2+(\nu-\alpha)c_{ss}^2>0,
		\end{align*}
		if we choose $\mu,\nu$ and $\bar{\alpha}$ large enough. We have a contradiction. Therefore $f_{-} \geq C(g(0))$ along the flow. 
		
		We now show $f_{+}\leq C(g(0))$.  
		Similarly, using Young's inequality we estimate $f_{+}$ at a maximum point $(p_{0},t_{0})$,
		\begin{align*}
			\partial_{t}f_{+}(p_{0},t_{0})&\leq \-cb_{ss}\left(\frac{c_{s}^2}{c^2}+\frac{8u^2}{b^2}+\frac{3b_{s}^2}{b^2}+\frac{2b_{s}c_{s}}{c}\left(\frac{\mu}{c}-\frac{1}{b}\right)\right) + 4\frac{c^2c_{ss}}{b^3}+\frac{2c_{s}^3b_{s}}{c^2}+\frac{4c_{s}^2c}{b^3}-2\frac{c_{s}c_{ss}b_{s}}{c}\\
			&\hspace{4mm}+\frac{24c^3b_{s}^2}{b^5}+\frac{2\mu b_{s}}{b^2}\left(\frac{4cc_{s}}{b}+4b_{s}\right)+2\nu \left(2\left(\frac{b_{s}}{b}-\frac{c_{s}}{c}\right)c_{ss}+\frac{8u^3b_{s}}{b^2}\right)\\
			&\hspace{4mm}-2(\mu+u)b_{ss}^2-2\nu c_{ss}^2+\frac{4cb_{ss}}{b^2}\\
			&\leq -\frac{\overline{\alpha}}{2}\left(\frac{c_{s}^2}{c^2}+\frac{8u^2}{b^2}+\frac{3b_{s}^2}{b^2}+\frac{2b_{s}c_{s}}{c}\left(\frac{\mu}{c}-\frac{1}{b}\right)\right)\\
			&\hspace{4mm}+\beta(\mu, \nu)\left(\frac{c_{s}^2}{c^2}+\frac{8u^2}{b^2}+\frac{3b_{s}^2}{b^2}+\frac{2b_{s}c_{s}}{c}\left(\frac{\mu}{c}-\frac{1}{b}\right)\right)\\
			&\hspace{4mm}-(\mu-\alpha)b_{ss}^2-(\nu -\alpha)c_{ss}^2<0,
		\end{align*}
		for $\mu, \nu$ and $\bar{\alpha}$ chosen large enough.
		
		Therefore $\lvert cb_{ss} \rvert \leq C(g(0))$. Thus $\lvert c^2K_{01}\rvert = \lvert c^2K_{02} \rvert =\lvert ucb_{ss} \rvert \leq \lvert cb_{ss} \rvert \leq C(g(0))$. 
		By bounding $\lvert cc_{ss}-\mu b_{s}^2-\nu c_{s}^2 \rvert$, a very similar argument gives a bound $\lvert c^2 K_{03} \rvert \leq C(g(0))$. 
	\end{proof}
	
	\begin{proof}[Proof of Proposition \ref{cto0}]
		Using (\ref{sectcurv}), we see that the contribution of $R_{0123}$ to $\lvert Rm \rvert^2$ is:
		\begin{equation}\label{R0123}
			\frac{1}{b^4c^2}\left(c^2 R_{0123}\right)^2=\left\lvert \frac{c^4(c_{s}-ub_{s})}{b^4} \right\rvert \leq C(g(0)).
		\end{equation}
		From equation (\ref{R0123}) and Lemmas \ref{k_12}, \ref{k_01} we have
		\begin{equation*}
			c^2\left\lvert Rm_{g(t)} \right\rvert_{g(t)} \leq C(g(0)),
		\end{equation*}
		along the flow.
	\end{proof}
	
	\begin{corollary}\label{climit}
		Let $(\mathbb{CP}^2\setminus\{ \mathrm{pt}\},g(t))$ for $[0,T)$ be the maximal complete Ricci flow with $g(0) \in G_{AF, \mathbb{CP}^2\setminus\{ \mathrm{pt}\}}$ with $T<\infty$. Then for every $r\geq0$, the limit $\lim_{t\uparrow T}c(r,t)$ exists and is finite.
	\end{corollary}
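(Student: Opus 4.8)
The plan is to show that, for each fixed radial coordinate $r\ge 0$, the function $t\mapsto c^2(r,t)$ is Lipschitz on $[0,T)$ with a constant independent of $r$ and $t$; since $T<\infty$ this forces $c^2(r,t)$ to have a finite limit as $t\uparrow T$, and taking the square root yields the statement (the limit may be $0$, which is allowed).

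The key observation is that, along the Ricci flow, the evolution of $c^2$ at a fixed radius is a pure curvature quantity controlled by Proposition \ref{cto0}. Indeed $X_3=\partial_\psi$ is a $t$-independent vector field on $S^3$ and $r$ is a $t$-independent radial coordinate, so $g(t)(X_3,X_3)=c^2(r,t)$ and the Ricci flow equation gives
\[
\partial_t\big(c^2(r,t)\big)\;=\;\big(\partial_t g(t)\big)(X_3,X_3)\;=\;-2\,\mathrm{Ric}_{g(t)}(X_3,X_3).
\]
Since $|X_3|_{g(t)}^2=c^2(r,t)$, we have $|\mathrm{Ric}_{g(t)}(X_3,X_3)|\le c^2(r,t)\,|\mathrm{Ric}_{g(t)}|_{g(t)}\le C\,c^2(r,t)\,|\mathrm{Rm}_{g(t)}|_{g(t)}$ for a dimensional constant $C$. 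For any $T'<T$ the restricted flow on $[0,T']$ is a complete Ricci flow with initial condition in $G_{AF,\mathbb{CP}^2\setminus\{\mathrm{pt}\}}$, so Proposition \ref{cto0} (whose constant depends only on $g(0)$, by its proof via Lemmas \ref{b_sc_sbound}, \ref{k_12}, \ref{k_01}) gives $c^2(r,t)\,|\mathrm{Rm}_{g(t)}|_{g(t)}\le C(g(0))$ for all $t\in[0,T']$; as $T'<T$ was arbitrary this holds for all $t\in[0,T)$. Hence
\[
\big|\partial_t\big(c^2(r,t)\big)\big|\;\le\;C(g(0))\qquad\text{for all }r\ge 0,\ t\in[0,T).
\]
Alternatively one obtains the same bound directly from the evolution system (\ref{riccifloweqaforbc}): at fixed $r$, $\tfrac12\,\partial_t(c^2)=c\,c_{ss}+2u\,b_sc_s-2u^4$ with $u=c/b$, where $|c\,c_{ss}|=c^2|K_{03}|\le C(g(0))$ by Lemma \ref{k_01}, while $u\le 1$ and $b_s,c_s\le C(g(0))$ by Corollary \ref{GAFCpre} and Lemma \ref{b_sc_sbound} control the remaining terms.

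The conclusion is then elementary. For $0\le t_1<t_2<T$ the bound gives $|c^2(r,t_2)-c^2(r,t_1)|\le C(g(0))\,(t_2-t_1)$, so $t\mapsto c^2(r,t)$ is uniformly continuous on $[0,T)$ and, because $T<\infty$, satisfies the Cauchy criterion as $t\uparrow T$; hence $\lim_{t\uparrow T}c^2(r,t)$ exists, is finite, and is $\ge 0$. Taking square roots shows $\lim_{t\uparrow T}c(r,t)$ exists and is finite.

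I do not expect any genuine obstacle here: all of the substance is already contained in Proposition \ref{cto0} and the curvature bounds of Lemmas \ref{b_sc_sbound}, \ref{k_12}, \ref{k_01}. The only point deserving a word of care is that Proposition \ref{cto0} is stated for a flow on a closed interval $[0,T]$, so one applies it on each $[0,T']$ with $T'<T$ and uses that its constant depends only on $g(0)$ to pass to the half-open interval $[0,T)$; equivalently, in the direct argument, one must note that only the $O(1)$ quantities $c_{ss}/c$, $u$, $b_s$, $c_s$ enter $\partial_t(c^2)|_r$, and no term requiring an unavailable bound of the form $b^2|\mathrm{Rm}_{g(t)}|_{g(t)}$ appears.
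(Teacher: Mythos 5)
Your proposal is correct and takes essentially the same approach as the paper: the paper's proof is precisely your ``alternative'' computation, bounding $\lvert\partial_t(c^2)\rvert \le 2\lvert cc_{ss}\rvert + 4\lvert u b_s c_s - u^4\rvert \le C(g(0))$ via Lemmas \ref{b_sc_sbound} and \ref{k_01}, and then concluding from $T<\infty$. Your primary framing via $\partial_t(c^2) = -2\,\mathrm{Ric}(X_3,X_3)$ and Proposition \ref{cto0} is the same estimate packaged slightly more invariantly, so the two are not genuinely different.
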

	
	\begin{proof}
		Lemmas \ref{b_sc_sbound} and \ref{k_01} give the bound
		\begin{equation*}
			\lvert \partial_{t}(c^2)\rvert \leq 2\lvert cc_{ss} \rvert +4\left\lvert ub_{s}c_{s}-u^4 \right\rvert \leq C(g(0)).
		\end{equation*}
		Since $T<\infty$, $c^2(r,t)$ and so $c(r,t)$ must have a limit as $t\rightarrow T$.
	\end{proof}

	\begin{proof}[Proof of Theorem \ref{curvblowup}]
		Proposition \ref{cto0} and Corollary \ref{climit} show that if the curvature blows up at a point $r$ then $\lim_{t \rightarrow T}  c(r,t)=0$.
		The condition $c_{s}\geq 0$ of Definition \ref{G} means that if there is a finite time blow up at time $T$, at some $R\geq0$, then $\lim_{t \rightarrow T}  c(r,t)=0$ for all $r\leq R$. But Corollary \ref{massconv} keeps $\lim_{r \rightarrow \infty}c(r,t)>0$ constant along the flow. So the set on which the curvature blows up are all points with $s \leq R$ or $s<R$ for some $0\leq R <\infty$.
	\end{proof}
	
	\subsection{Restatement of Theorem \ref{fullthm}}
	Now the classes of metrics $G_{AF, \mathbb{R}^4},G_{AF, \mathbb{CP}^2\setminus\{ \mathrm{pt}\}}$ and their masses have been defined, the following slightly stronger version of Theorem \ref{fullthm} can be stated. Using the concept of the mass of metrics in the special class $G_{AF, \mathbb{R}^4}$ of metrics we will be able to say, up to a diffeomorphism, which member of the Taub-NUT family the post surgery flow converges to.
	\begin{theorem} \label{fullthm2}
		There is a Ricci flow $G(t)$ with surgery such that
		\begin{enumerate}[i)]
			\item The flow $G(t)$ on $0 \leq t< T$ is a solution in $G_{AF, \mathbb{CP}^2-\{ \mathrm{pt}\}}$ with $G(0)$ of mass $m$, and $G(t_0)=g_{\mathrm{Bolt}}$ outside some compact set of $\mathbb{CP}^2\setminus\{\text{pt}\}$..
			\item A finite time singularity is formed at time $T$, with the bolt shrinking to zero size and the curvature blows up exactly on either the bolt or some tubular neighbourhood of the bolt, or the closure of such a neighbourhood.
			\item The singularity at time $T$ is of Type I and is modelled on the K\"{a}hler shrinking Ricci soliton FIK.
			\item Surgery can performed near the bolt to produce a metric $\tilde{G} \in G_{AF, \mathbb{R}^4}$, with the local topology change $\mathbb{CP}^2\setminus\{ \mathrm{pt}\} \rightarrow \mathbb{R}^4$.
			\item Then the Ricci flow $G(t)\in G_{AF, \mathbb{R}^4}$ starting at $\tilde{G}$ is immortal and converges to a member of the Taub-NUT family with mass $m$ in the pointed Cheeger-Gromov sense as $t \rightarrow \infty$.
		\end{enumerate}
	\end{theorem}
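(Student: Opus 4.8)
\emph{Overview.} The proof assembles three ingredients: a $U(2)$-symmetric version of the Wa{\.z}ewski box argument of \cite{Stol1}, developed in Section \ref{setupsection}; the structural results on $G_{AF,\mathbb{CP}^2\setminus\{\mathrm{pt}\}}$ from Section \ref{GAFsection}; and the dynamical stability Theorem \ref{Fra1}. The five items are proved essentially in order.

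\emph{The pre-surgery flow and the FIK singularity (items i), iii)).} First I would build, following \cite{Stol1}, a finite-dimensional family of $U(2)$-symmetric initial metrics obtained by gluing perturbations of $g_{\mathrm{FIK}}$ into a fixed asymptotically flat metric equal to $g_{\mathrm{Bolt}}$ outside a compact set, with the perturbation parameters running along the eigenmodes of $\Delta_{\overline{g},f}+2\overline{Rm}$ of non-negative eigenvalue --- there are at least two by Remark \ref{>0} --- supplied by Theorem \ref{U(2)Spectrum}. The gluing is arranged so that every member lies in $G_{AF,\mathbb{CP}^2\setminus\{\mathrm{pt}\}}$ and, since the modification is compactly supported and the mass depends only on the asymptotics, has the same mass $m=m_{g_{\mathrm{Bolt}}}>0$. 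Running the topological shooting argument of \cite{Stol1} on this family produces, for a suitable parameter value, a flow $G(t)$ on $[0,T)$ that does not extend past the finite time $T$ and whose Type I parabolic rescalings about the bolt converge in $C^\infty_{\mathrm{loc}}$ to $g_{\mathrm{FIK}}$; by Theorem \ref{FIKthm} this forces the bolt to shrink to zero size as $t\nearrow T$. The construction differs from \cite{Stol1} only in that we attach a single FIK cap rather than two glued to a finite cylinder --- so the a priori estimates are carried out on the whole of $\mathbb{CP}^2\setminus\{\mathrm{pt}\}$ just as \cite{Stol1} does on one half of a closed manifold --- and in that the perturbations are $U(2)$-symmetric, which by the reduced system \eqref{riccifloweqaforbc} keeps the flow in the two-function ansatz \eqref{rf}.

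\emph{Localising the blow-up and surgery (items ii), iv)).} By Corollary \ref{GAFCpre} the flow stays in $G_{AF,\mathbb{CP}^2\setminus\{\mathrm{pt}\}}$, so Theorem \ref{curvblowup} identifies the blow-up set as $\{p:\,d_{G(0)}(\mathrm{bolt},p)\in I_R\}$ for some $0\le R<\infty$, i.e.\ the bolt or a finite tubular neighbourhood of it, giving ii). For the surgery, note that for $t$ near $T$ the rescaled metric on $\{s\le R+\varepsilon\}$ is $C^\infty_{\mathrm{loc}}$-close to a piece of $g_{\mathrm{FIK}}$, hence near $s=0$ to the cone $C_{\mathrm{FIK}}$. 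I would excise a small neighbourhood of the bolt and glue in an explicit $U(2)$-symmetric cap, replacing the bolt boundary behaviour of Condition \ref{0(-1)cond} by the nut boundary behaviour of Condition \ref{R^4conds} and interpolating smoothly with $G(t)$ outside $\{s\le R+\varepsilon\}$, so that the resulting $\tilde G$ is smooth, $U(2)$-symmetric, and satisfies $u\le1$, $b_s,c_s\ge0$ and the curvature decay of Definition \ref{GAF}; thus $\tilde G\in G_{AF,\mathbb{R}^4}$. As the modification is compactly supported, $m_{\tilde G}=m>0$ by Lemma \ref{asmptoticform}. This realises the topology change $\mathbb{CP}^2\setminus\{\mathrm{pt}\}\to\mathbb{R}^4$, and by construction the surgery can be performed arbitrarily close to $T$ on an arbitrarily small set containing the blow-up region.

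\emph{Post-surgery convergence (item v)) and the main obstacle.} Since $\tilde G\in G_{AF,\mathbb{R}^4}$ has positive mass $m$, Theorem \ref{Fra1} gives that the Ricci flow from $\tilde G$ is immortal and converges in the pointed Cheeger-Gromov sense to the Taub-NUT metric of mass $m$, with mass conserved by \cite[Corollary 3.2]{FraNUT}; this finishes iv) and v). The genuinely hard step is the box argument itself: unlike in \cite{App1,Dan2,Stol2}, one cannot even obtain the finite-time singularity from a global maximum principle, since metrics in $G_{AF,\mathbb{CP}^2\setminus\{\mathrm{pt}\}}$ have decaying curvature at infinity and pseudolocality \cite{pseudolocality} obstructs it, so the technical core is to establish the weighted a priori estimates and the exit/entrance conditions needed to run the Wa{\.z}ewski shooting on a non-compact manifold while keeping the flow inside $G_{AF,\mathbb{CP}^2\setminus\{\mathrm{pt}\}}$. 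A secondary, more routine, point is checking that the explicit surgery cap can be chosen to preserve the conditions $b_s,c_s\ge0$ and $u\le1$ required by Theorem \ref{Fra1}.
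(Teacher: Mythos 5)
Your proposal follows essentially the same route as the paper: Wa{\.z}ewski box argument on $\mathbb{CP}^2\setminus\{\mathrm{pt}\}$ to produce an FIK-modelled singularity, Corollary \ref{GAFCpre} and Theorem \ref{curvblowup} to localise the blow-up to $\{s\in I_R\}$, a cut-and-cap surgery replacing bolt boundary data (Condition \ref{0(-1)cond}) by nut boundary data (Condition \ref{R^4conds}) landing in $G_{AF,\mathbb{R}^4}$, and then Theorem \ref{Fra1} with mass conservation for the post-surgery convergence. One small imprecision worth flagging: the perturbation parameters do not run only over eigenmodes of non-negative eigenvalue; in the paper one fixes a threshold $\lambda_*\in(-\infty,0)\setminus\{\lambda_j\}$ and perturbs along all $h_1,\dots,h_K$ with $\lambda_K>\lambda_*>\lambda_{K+1}$, so modes with $\lambda_j\in(\lambda_*,0)$ are also included as unstable; Remark \ref{>0} is only used to guarantee $K\ge2$. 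Also, the paper's argument that $G_{\mathbf p^*}(t)$ lies in $G_{AF,\mathbb{CP}^2\setminus\{\mathrm{pt}\}}$ runs through Proposition \ref{diagonalprop}, which requires diagonalising the perturbed metric by a $\mathbf p$-dependent choice of radial vector field and checking $b_s,c_s\ge0$, $u\le1$ survive — this is a genuine step, not automatic from compact support of the perturbation. Neither point is a structural departure, so the overall approach is the same.
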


	\section{The setup of the Wa{\.z}ewski box argument }\label{setupsection}
	This section will very closely follow the set up of the Wa{\.z}ewski box argument of \cite{Stol1}, which will be used to produce Ricci flow in $G_{AF, \mathbb{CP}^2\setminus\{ \mathrm{pt}\}}$ which encounters a finite time singularity modelled on FIK. 
	The box argument is a topological one: A family of Ricci flows $G_{\mathbf{p}}(t)$ with initial data parametrised by $\mathbf{p} \in \overline{B}_{R} \subset \mathbb{R}^K$, for some $R>0$ and $K\geq 2$ will be constructed. The 'box', roughly speaking, will be defined as a collection of metrics required to be within a time dependent distance from $g_{\mathrm{FIK}}(t)$. This distance becomes tends to zero as $t$ tends to 1, the time at which the curvature of $g_{\mathrm{FIK}}(t)$ blows up. If, for all $\mathbf{p} \in \overline{B}_R$, $g_{\mathbf{p}}(t)$ leaves the box before $t=1$, then there will exist a continuos map 
	$$\mathcal{F} : \overline{B}_{R } \to \mathbb{R}^K\setminus\{0\},$$
	whose restriction to $$\overline{A} = \left\{ \mathbf p \in \mathbb{R}^K : \frac{R}{2} \leq \lvert \mathbf p \rvert \leq R \right\}$$
	is homotopic to the identity $\text{Id} : \overline{A} \to \overline{A} \subset \mathbb{R}^K \setminus \{ 0 \}$. This implies that there is a map $S^{K-1}\rightarrow S^{K-1}$ which is null-homotopic, and so $S^{K-1}$ is homotopy equivalent to the topological space with one point. We have a contradiction. 
	
	The set up of the box argument used in this paper will only differ from the one used in \cite{Stol1} in that we glue our shrinker into a non-compact manifold and the eigentensors will have an additional $U(2)$-symmetry. It is easy to adapt the work of Stolarski to the non-compact setting provided we glue into a Riemannian manifold of bounded curvature, as this will allow us to use the pseudolocality result \cite{pseudolocality} to bound the curvature of the Ricci flow outside some compact subset of $M$. With the non-compactness dealt with, the fact that we have the $U(2)$-symmetric basis of eigentensors of Theorem \ref{U(2)Spectrum} and our flow is $U(2)$-symmetric, the proof of the finite time singularity modelled on FIK will follow exactly as in \cite{Stol1}, only now we decompose two-tensors via our $U(2)$-symmetric basis of eigenfunctions. 
	Combining with the results of Section \ref{GAFsection}, Theorem \ref{fullthm2} will follow with little work.

	An outline of this section is a follows:
	
	First we will construct a family of metrics $G_\mathbf{0}(t_0)$ which are metrics that interpolate between FIK, its asymptotic cone and Taub-Bolt. Then we will define the metrics $G_{\mathbf{p}}(t_0)$ by 
	\begin{equation*} 
		G_{\mathbf p} (t_0 ) \coloneqq G_{\mathbf 0 }(t_0 ) 	
		+ ( 1 - t_0) \phi_{t_0}^* \left( \eta_{\gamma_0}     \sum_{j = 1}^{K } p_j  h_{j}   \right),
	\end{equation*}
	for $\mathbf{p}\in \overline{B}_R$ and $\eta_{\gamma_0}$ a bump function with support containing the zero section of $\mathbb{CP}^2\setminus\{\text{pt}\}$. Then we will check that for all enough $|\mathbf{p}|$, $G_{\mathbf{p}}(t_0)$ can be put into diagonal form (\ref{a}). Thus, the analysis of the previous section can be utilised. Then we will set up the box argument. After that we will prove the results that need adapting to our case of a non-compact manifold. Finally, we will prove Theorem \ref{fullthm2}.

	Recall that we fixed the notation $(\mathbb{CP}^2\setminus\{\text{pt}\}, \overline{g}, f)\coloneqq ( \mathbb{CP}^2\setminus\{\text{pt}\}, g_{\mathrm{FIK}}, f_{\mathrm{FIK}})$. 
	Using Theorem \ref{U(2)Spectrum}, fix a smooth $L_{f}^{2,U(2)}$-orthonormal basis $\{h_{j}\}_{j=1}^{\infty}$ of eigenmodes with corresponding eigenvalues $\{\lambda_{j}\}$ for the Weighted Lichnernowicz Laplacian $\Delta_{f}+2\overline{Rm}$. Choose $\lambda_{*}\in (-\infty,0)\setminus \{\lambda_{j}\}$ and $K\in \mathbb{N}$ such that $\lambda_{K}>\lambda_{*}>\lambda_{K+1}$.

	\subsection{Construction of $G_{\mathbf{0}}(t_{0})$}
	Recall from \ref{FIKthm} that the FIK $(\mathbb{CP}^2\setminus\{\text{pt}\},\overline{g},f)$ is asymptotic to $C_{\mathrm{FIK}}$, the cone over the sphere $\left(S^3, g_{S^3_\mathrm{FIK}} \coloneqq \frac{1}{\sqrt{2}}(\sigma_{1}^2+\sigma_{2}^2)+\frac{1}{2}\sigma_{3}^2\right)$. 
	
	Now we construct the family $G(t_0)$ of metrics which will be an interpolation between $(1-t_0)\phi_{t_0}^*\overline{g}$ and a member of the Taub-Bolt family.
	This is done by following the construction in \cite[Appendix A]{Stol1} closely. Instead of 
	glueing FIK into a cylinders we glue it into metrics in $G_{AF, \mathbb{CP}^2\setminus\{\text{pt}\}}$. All we need to really show is that we can do this while still having good enough estimates on the curvature (i.e. an analogue of \cite[Lemma A.3]{Stol1}):

	\begin{lemma}\label{conetoTB}
		For $R\geq 3$ large enough, there exists smooth increasing functions $b_{R},c_{R}:(0,\infty)\rightarrow (0,\infty)$ which satisfy
		$$b_R(s) \left\{ \begin{array}{ll}
			= \frac{s}{\sqrt[4]{2}}, 		& \text{ if } 0 < s \le R,	\\
			=  b_{\mathrm{Bolt}}(s), 	& \text{ if } 3R \le s,	\\
		\end{array} \right.$$
		$$c_R(s) \left\{ \begin{array}{ll}
			= \frac{s}{\sqrt{2}}, 		& \text{ if } 0 < s \le R,	\\
			= c_{\mathrm{Bolt}}(s), 	& \text{ if } 3R \le s,	\\
		\end{array} \right.$$
		and so that the curvature of the metric 
		$$g_R=ds^2+b_{R}^2(s)(\sigma_{1}^2+\sigma_{2}^2)+c_{R}^2(s)\sigma_{3}^2, \qquad s>\frac{R}{2},$$ 
		on $\mathbb{R}_{>0} \times S^3$ satisfies
		$$ \lvert \nabla_{g_R}^{m}Rm[g_R]\rvert_{g_R}\leq C=C(m,R),$$
		for all points with $s>\frac{R}{2}$.
		Also,
		\begin{enumerate}[i)]
			\item $\frac{c_R}{b_R}\leq 1-\delta$ \text{ for some } $\delta=\delta(R)>0$,
			\item $(b_R)_s, (c_R)_s >0$,
			\item $\sup_{s(p)>\frac{R}{2}} (s(p))^{2+\epsilon} \left\lvert Rm_{g_R} \right\rvert _{g_R}(p) < \infty, \text{ for some } \epsilon>0$.
		\end{enumerate}
		Furthermore, there exists $c=c(R)>0$ such that,
		$$Vol_{g_R}(B_{g_R}(x,1) \geq c, \qquad \text{ for all } s(x)>\frac{R}{2}.$$
	\end{lemma}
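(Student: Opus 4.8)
The plan is to follow the interpolation construction of \cite[Lemma A.3]{Stol1}, where now the cone $C_{\mathrm{FIK}}$ is glued into a member of the Taub-Bolt family rather than into a cylinder. Fix a smooth non-increasing cut-off $\chi_R\colon\mathbb{R}\to[0,1]$ with $\chi_R\equiv1$ on $(-\infty,R]$, $\chi_R\equiv0$ on $[3R,\infty)$ and $\lvert\chi_R^{(k)}\rvert\le C_kR^{-k}$, let $b_{\mathrm{Bolt}},c_{\mathrm{Bolt}}$ be the profile functions of $g_{\mathrm{Bolt},n}$ (a member of the Taub-Bolt family whose scale $n=n(R)$ is fixed below), and set
\begin{equation*}
 b_R(s):=\chi_R(s)\tfrac{s}{\sqrt[4]{2}}+\bigl(1-\chi_R(s)\bigr)b_{\mathrm{Bolt}}(s),\qquad c_R(s):=\chi_R(s)\tfrac{s}{\sqrt{2}}+\bigl(1-\chi_R(s)\bigr)c_{\mathrm{Bolt}}(s).
\end{equation*}
These are smooth on $(0,\infty)$, equal the cone profiles on $(0,R]$ and equal those of $g_{\mathrm{Bolt},n}$ on $[3R,\infty)$, so $g_R$ has the stated boundary behaviour.

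The first task is to choose $n(R)$ so that both profiles come out increasing. Using $b_{\mathrm{Bolt},n}(s)=n\,b_{\mathrm{Bolt},1}(s/n)$, $c_{\mathrm{Bolt},n}(s)=n\,c_{\mathrm{Bolt},1}(s/n)$ together with the behaviour of $g_{\mathrm{Bolt},1}$ near its bolt, namely $b_{\mathrm{Bolt},1}(0)>0$ and $c_{\mathrm{Bolt},1}(\tau)\sim\tau$ as $\tau\to0$, one obtains $\tfrac{s}{\sqrt[4]{2}}\le b_{\mathrm{Bolt},n}(s)$ and $\tfrac{s}{\sqrt{2}}\le c_{\mathrm{Bolt},n}(s)$ for all $s\in[R,3R]$ once $n=n(R)$ is large. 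With this choice, on $[R,3R]$,
\begin{equation*}
 (b_R)_s=\chi_R'\bigl(\tfrac{s}{\sqrt[4]{2}}-b_{\mathrm{Bolt}}\bigr)+\chi_R\tfrac{1}{\sqrt[4]{2}}+(1-\chi_R)(b_{\mathrm{Bolt}})_s>0,
\end{equation*}
because the first summand is a product of two non-positive factors while $(b_{\mathrm{Bolt}})_s>0$ for $s>0$ (clear from (\ref{TB})) and hence is bounded below by a positive constant on the bounded interval $[R,3R]$; the same computation applies to $c_R$, and monotonicity on $(0,R]\cup[3R,\infty)$ is immediate. This gives (ii). For (i), note that pointwise $c_R/b_R$ is a convex combination of $c_{\mathrm{cone}}/b_{\mathrm{cone}}=2^{-1/4}$ and $u_{\mathrm{Bolt},n}(s)=u_{\mathrm{Bolt},1}(s/n)$; since $\sup u_{\mathrm{Bolt},1}<1$ is scale-invariant (again clear from (\ref{TB})), we get $c_R/b_R\le1-\delta$ where $\delta:=1-\max(2^{-1/4},\sup u_{\mathrm{Bolt},1})>0$.

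The remaining claims — the bounds on $\lvert\nabla^m Rm[g_R]\rvert$, the decay (iii), and the volume lower bound — follow by splitting $\{s>R/2\}$ into three regimes. On $\{R/2<s\le3R\}$ the profiles $b_R,c_R$ are bounded above, bounded below away from $0$, and have all $s$-derivatives bounded by $C(m,R)$ (for the Taub-Bolt part via $b_{\mathrm{Bolt},n}^{(m)}(s)=n^{1-m}b_{\mathrm{Bolt},1}^{(m)}(s/n)$ with $s/n$ small, and likewise for $c$); feeding this into the universal formulas expressing $\nabla^m Rm$ in terms of $b_R,c_R,b_R^{-1},c_R^{-1}$ and their $s$-derivatives (as in (\ref{sectcurv})) gives $\lvert\nabla_{g_R}^m Rm[g_R]\rvert_{g_R}\le C(m,R)$ there, so in particular $s^{2+\epsilon}\lvert Rm_{g_R}\rvert_{g_R}\le(3R)^{2+\epsilon}C(0,R)$. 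On $\{s\ge3R\}$ one has $g_R=g_{\mathrm{Bolt},n}$, which is Ricci-flat and asymptotically locally flat, so $\lvert\nabla^m Rm\rvert\to0$ and, for any $\epsilon\in(0,1)$, $s^{2+\epsilon}\lvert Rm\rvert\to0$; with the previous estimate this yields (iii) and $\lvert\nabla^m Rm[g_R]\rvert\le C(m,R)$ on all of $\{s>R/2\}$. For the volume bound, on $\{R/2<s\le3R\}$ the metric has bounded geometry and injectivity radius bounded below in terms of $R$, hence $\mathrm{Vol}_{g_R}(B_{g_R}(x,1))\ge c(R)>0$; on $\{s\ge3R\}$ the $\sigma_3$-circle has length $\gtrsim n\gg1$, so $g_{\mathrm{Bolt},n}$ is non-collapsed there with $\mathrm{Vol}_{g_R}(B_{g_R}(x,1))$ bounded below (and converging as $s\to\infty$ to the volume of a flat unit $4$-ball). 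Taking the minimum of the two lower bounds gives the required $c(R)$.

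The one step needing genuine care is the choice of the Taub-Bolt scale $n(R)$: it must be large enough that the single explicit interpolation above is simultaneously increasing, keeps $c_R/b_R$ uniformly below $1$, and has all $s$-derivatives controlled. Once that scale is fixed, the curvature and non-collapsing bounds reduce to the three elementary regimes described above, exactly as in \cite[Lemma A.3]{Stol1}.
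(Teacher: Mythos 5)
Your proposal is correct and follows essentially the same route as the paper: rescale a member of the Taub-Bolt family (the paper writes $\alpha^2 g_{\mathrm{Bolt}}$, you write $g_{\mathrm{Bolt},n}$) so that its profile dominates the cone profile near the gluing region, interpolate, and verify the curvature, decay, and volume bounds by splitting $\{s>R/2\}$ into the cone, interpolation, and Taub-Bolt regimes. Your explicit cutoff $\chi_R$ with the pointwise sign computation for $(b_R)_s$, and the observation that $c_R/b_R$ is a weighted mean of the cone and Bolt ratios, are a welcome tightening of the paper's more terse assertion that one can interpolate keeping $(b_R)_s,(c_R)_s>0$ and $c_R/b_R<1-\delta$; the underlying idea and the rest of the estimates coincide.
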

	\begin{proof}
		We can write 
		$$g_{\mathrm{Bolt}}=dr^2+b_{\mathrm{Bolt}}^2(r)(\sigma_{1}^2+\sigma_{2}^2)+ c_{\mathrm{Bolt}}^2(r)\sigma_{3}^2, \qquad r>0$$
		for some radial vector field $\partial_r$, and
		$$g_{C_{\mathrm{FIK}}}= ds^2+\frac{s^2}{\sqrt{2}}(\sigma_{1}^2+\sigma_{2}^2)+ \frac{s^2}{2}\sigma_{3}^2, \qquad s>0,$$
		for some radial vector field $\partial_s.$
		For $\alpha>0$, we have 
		\begin{align*}
			\alpha^2 g_{\mathrm{Bolt}}&=\alpha^2 (dr^2+b_{\mathrm{Bolt}}^2(r)(\sigma_{1}^2+\sigma_{2}^2)+ c_{\mathrm{Bolt}}^2(r)\sigma_{3}^2)\\
			&= dr^2+ \alpha^2 b^2_{\mathrm{Bolt}}\left(\frac{s}{\alpha}\right)(\sigma_{1}^2+\sigma_{2}^2)+ \alpha^2 c^2_{\mathrm{Bolt}}\left(\frac{s}{\alpha}\right)\sigma_{3}^2.
		\end{align*}
		
		For fixed $R>0$, $$\alpha c_{\mathrm{Bolt}}\left(\frac{s}{\alpha}\right) \rightarrow \alpha\frac{s}{\alpha}=s, \alpha b_{\mathrm{Bolt}}\left(\frac{s}{\alpha}\right) \rightarrow \alpha b_{\mathrm{Bolt}}(0),$$ as $\alpha\rightarrow \infty.$ Therefore, for $\alpha(R)>0$ large enough, 
		$$\alpha c_{\mathrm{Bolt}}\left(\frac{3R}{\alpha}\right)>\frac{R}{\sqrt{2}},~ \alpha b_{\mathrm{Bolt}}\left(\frac{3R}{\alpha}\right)>\frac{R}{\sqrt[4]{2}}.$$
		Therefore there are increasing functions $b_R, c_R$ such that 
		$$b_R(s) \left\{ \begin{array}{ll}
			= \frac{s}{\sqrt[4]{2}}, 		& \text{ if } 0 < s \le R,	\\
			=  \alpha b_{\mathrm{Bolt}}(\frac{s}{\alpha}), 	& \text{ if } 3R \le s,	\\
		\end{array} \right.$$
		$$c_R(s) \left\{ \begin{array}{ll}
			= \frac{s}{\sqrt{2}}, 		& \text{ if } 0 < s \le R,	\\
			= \alpha c_{\mathrm{Bolt}}(\frac{s}{\alpha}), 	& \text{ if } 3R \le s.	\\
		\end{array} \right.$$
		Note that $\frac{c_{\mathrm{Bolt}}}{\mathrm{b_{\mathrm{Bolt}}}}(s)\leq 1$ for all $s\geq 0$, $\frac{c_{\mathrm{Bolt}}}{\mathrm{b_{\mathrm{Bolt}}}}(0)=0$, and $\frac{c_{\mathrm{Bolt}}}{\mathrm{b_{\mathrm{Bolt}}}}(s)\rightarrow 0$ as $s\rightarrow \infty$. Thus, $(b_R)_s, (c_R)_s>0$, $\frac{c_R}{b_R}< 1-\delta$ for $s \not\in (R,3R)$ and some $\delta=\delta(R)>0$. Therefore we can interpolate in such a way that $(b_R)_s, (c_R)_s>0$, $\frac{c_R}{b_R}< 1-\delta$ for all $s>0$.	
		The curvature bound conditions and the curvature decay condition iii) clearly follows as it is true  for $g_{\mathrm{Bolt}}$. Since $R\geq 3$,  $B_{g_R}(x,1) \subset \{y: s(y)>\frac{1}{2}\}$ for all $x$ with $s(x)>\frac{R}{2}$. Therefore, the volume lower bound follows simply from the fact that
		it is true for $g_{\mathrm{Bolt}}$ as it has bounded curvature and positive injectivity radius.
	\end{proof}

	There exists a smooth family of diffeomorphisms $$\Psi_t =  \phi_t \circ \Psi : C_{\mathrm{FIK}}  \to (\mathbb{CP}^2\setminus \{\mathrm{pt}\})\setminus \{s=0\}$$ such that 
	$$ ( 1 - t) \Psi_t^*  \overline{g} \xrightarrow[t \nearrow 1]{ C^\infty_{loc} ( C(S^3), g_{C_{\mathrm{FIK}}} ) }		g_{C_{\mathrm{FIK}}}.$$
	In particular, for any $R_0$, we have uniform convergence 
	on $[R_0,\infty) \times S^3$.
	
	Let $R_1, R_2, R_3 \in \mathbb{R}$ be such that
	$$0 < R_0 < R_1 < R_2 < R_3.$$
	Let $\eta(r) : (0, \infty) \to [0,1]$ be a smooth bump function that decreases from 1 to 0 over the interval $(R_1, R_2)$.
	For all $0 \le t_0 < 1$, consider the metric $\check{G}(t_0)$ on $s>R_0$ given by 
	$$\check G(t_0)  = \eta ( 1 - t_0) \Psi^* \phi_{t_0}^* \overline{g} + ( 1 - \eta) g_{R_3}$$
	where $g_{R_3}$ is the warped product metric from lemma \ref{conetoTB}.\\

	Following the rest of \cite[Appendix A]{Stol1}, we can easily construct $G_0(t_0)$, but now with the additional property that $G(t_0) \in G_{AF, \mathbb{CP}^2\setminus\{\text{pt}\}}$. Recall the diffeomorphism $\Psi$ of Theorem \ref{FIKthm}.
	
	\begin{proposition} \label{G_0}
		Consider the metric $G(t_0)-G(\Gamma_{0}, \gamma_0, t_0)$ on $\mathbb{CP}^2\setminus \{\mathrm{pt}\}$ which consists of $( \Psi^{-1})^* \check G(t_0)$ extended by $( 1- t_0) \phi_{t_0}^* \overline{g}$. If $\frac{1}{4 \sqrt{2} } R_3 = \frac{1}{2} R_2 = R_1=\sqrt{\frac{\Gamma_{0}}{2}} \gg 1$ is sufficiently large,
		then the following holds:
		\begin{enumerate}
			\item (FIK near the bolt of $(\mathbb{CP}^2\setminus \{\mathrm{pt}\},\overline{g})$) 
			For all $0 \le t_0, t_0' < 1$, 
			$$G(t_0) = (1-t_0)\phi_{t_0}^* \overline{g} 
			\qquad \text{on } \left \{ x \in \mathbb{CP}^2\setminus \{\mathrm{pt}\} : f(x)  \leq \Gamma_{0} \right \}.$$ 
			\item (Independent of $t_0$ far from the bolt)
			$$G(t_0) = G(t_0')	\qquad \text{on } 
			\left \{ x\in \mathbb{CP}^2\setminus \{\mathrm{pt}\}:f(x)\geq 16\Gamma_0 \right\},$$
			In fact, there exists $\tilde{C}>0$ such that $G(t_0)=g_{Bolt}$ for $\{f>\tilde{C}\}$.
			\item (Convergence to cone in an intermediate region)
			$G(t_0)$ smoothly converges to $(\Psi^{-1})^* g_{C}$ in the region 
			$$\left \{x\in \mathbb{CP}^2\setminus \{\mathrm{pt}\} : \frac{\Gamma_{0}}{2}<f(x)< 32\Gamma_0 \right \}\subset \Psi(C_{R_0})$$
			as $t_0\nearrow 1$, and
			\item 	(Curvature estimates far from the bolt)
			For any $m\in\mathbb{N}$, if $0<1-t_0\ll 1$ is sufficiently small then 
			$$|\nabla^m Rm|_{G(t_0)} \leq C=C(m, \Gamma_{0}) 
			\qquad \text{on } \left\{x\in \mathbb{CP}^2\setminus \{\mathrm{pt}\} :f(x)>\frac{\Gamma_0}{2}\right\}.$$
			\item There exists $c=c(\Gamma_0)>0$ such that 
			$$Vol_{G(t_0)}(B_{G(t_0)}(x,1))\geq c \qquad \text{on } \left\{x\in \mathbb{CP}^2\setminus \{\mathrm{pt}\} :f(x)>\frac{\Gamma_0}{2}\right\}.$$
			\item $G(t_0) \in G_{AF, \mathbb{CP}^2\setminus\{\mathrm{pt}\}}$ and when written in the form (\ref{eq:1}), $\frac{c}{b} < 1-\delta$ for some $\delta>0$, and $b_s,c_s>0$ for $s>0$.
		\end{enumerate}
	\end{proposition}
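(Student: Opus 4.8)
The plan is to import the construction of \cite[Appendix A]{Stol1} essentially unchanged for parts (1)--(5), the only structural change being that $\overline g$ is glued into $g_{R_3}$ --- which by Lemma \ref{conetoTB} is the cone $(\Psi^{-1})^*g_{C_{\mathrm{FIK}}}$ on its inner end ($s\le R_3$) and $g_{\mathrm{Bolt}}$ near infinity ($s\ge 3R_3$) --- in place of a cylinder; no argument of \cite{Stol1} uses compactness of the glued region or the specific model at infinity. Concretely, $\check G(t_0)=\eta\,(1-t_0)\Psi^*\phi_{t_0}^*\overline g+(1-\eta)g_{R_3}$, where $\eta$ cuts off over $(R_1,R_2)$, and $G(t_0)$ is this metric extended inward by $(1-t_0)\phi_{t_0}^*\overline g$; the normalisation relating $\Gamma_0$ to $R_1,R_2,R_3$ is chosen, using the asymptotics $f\sim\tfrac14 s^2$ of the FIK potential \cite{FIK}, so that the potential sublevel sets $\{f\le\Gamma_0\}$, $\{f\le 16\Gamma_0\}$, etc.\ correspond to the stated ranges of $s$, and in particular the cut-off interval $(R_1,R_2)$ lies strictly inside $(0,R_3)$, where $g_{R_3}$ is exactly the cone.

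For (1), on $\{f\le\Gamma_0\}$ one has $s\le R_1$, where $\eta\equiv 1$, so $G(t_0)=(1-t_0)\phi_{t_0}^*\overline g$. For (2), on $\{f\ge 16\Gamma_0\}$ one has $s\ge R_2$, where $\eta\equiv0$, so $G(t_0)=g_{R_3}$, visibly independent of $t_0$; and beyond $s=3R_3$ (hence beyond some $\tilde C$ in $f$) Lemma \ref{conetoTB} gives $g_{R_3}=g_{\mathrm{Bolt}}$. Part (3) follows from the $C^\infty_{loc}$ convergence $(1-t_0)\Psi_{t_0}^*\overline g\to g_{C_{\mathrm{FIK}}}$ of Theorem \ref{FIKthm}, together with the fact that $g_{R_3}$ is identically the cone on the relevant range of $s$ (as $R_2\ll R_3$), so both pieces of the interpolation converge to $(\Psi^{-1})^*g_{C}$. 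Parts (4) and (5) then follow by combining this convergence with the curvature bounds and the non-collapsing estimate of Lemma \ref{conetoTB}: on $\{f>\Gamma_0/2\}$, $G(t_0)$ is a fixed, $t_0$-independent smooth interpolation (via $\eta$) of two metrics that are $C^\infty$-close to $g_C$ once $1-t_0\ll1$, hence has uniformly bounded covariant derivatives of curvature and uniformly non-collapsed unit balls.

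The genuinely new content is (6)--(7). All three building blocks --- FIK near the bolt, the cone, and $g_{R_3}$ --- are $U(2)$-symmetric cohomogeneity-one metrics of the diagonal type (\ref{eq:1}), glued along $U(2)$-orbits, so $G(t_0)$ is again of that form; smoothness at the bolt holds because there $G(t_0)$ equals the (rescaled, diffeomorphism-pulled-back, hence still smooth on $\mathbb{CP}^2\setminus\{\mathrm{pt}\}$) metric $(1-t_0)\phi_{t_0}^*\overline g$. Condition (iii) of Definition \ref{G} holds since for large $f$ we have $G(t_0)=g_{\mathrm{Bolt}}$, for which $|Rm|\sim r^{-3}$ while $d_{G(t_0)}(\mathrm{bolt},\cdot)$ agrees with the Taub-Bolt distance up to a bounded additive error there, and on the complement $\{f\le\tilde C\}$ the curvature is bounded by (1) and (4). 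For the inequalities $c/b<1-\delta$, $b_s,c_s>0$, split into three regions. On $\{f\le\Gamma_0\}$, $G(t_0)=(1-t_0)\phi_{t_0}^*\overline g$: the constant rescale preserves $c/b$ and the signs of $b_s,c_s$, and $\phi_{t_0}$, being the flow of the $U(2)$-invariant radial field $\overline{\nabla}f_{\mathrm{FIK}}$, preserves the cohomogeneity-one structure and acts on the arclength coordinate as a monotone increasing reparametrisation, so $c_{\mathrm{FIK}}/b_{\mathrm{FIK}}\le 1-\delta$ and $(b_{\mathrm{FIK}})_s,(c_{\mathrm{FIK}})_s>0$ of Theorem \ref{FIKthm}(ii) pass to $G(t_0)$. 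On $\{f\ge 16\Gamma_0\}$, $G(t_0)=g_{R_3}$ and the required properties are exactly Lemma \ref{conetoTB}(i)--(ii). In between, $\check G(t_0)=\eta\,(1-t_0)\Psi^*\phi_{t_0}^*\overline g+(1-\eta)g_{C}$ is a convex combination of two diagonal metrics, both converging in $C^\infty$ --- uniformly on this fixed range of $s$ --- to the cone $g_C$, for which $c/b=2^{-1/4}$, $b_s=2^{-1/4}$, $c_s=2^{-1/2}$ are bounded away from the relevant thresholds; the warped-product coefficients of the combination are $\sqrt{\eta b_1^2+(1-\eta)b_2^2}$ and analogously for $c$, whose $s$-derivatives differ from a positive combination of the $(b_i)_s$ by a term proportional to $\eta'(b_1^2-b_2^2)$, and since $|b_1^2-b_2^2|,|c_1^2-c_2^2|\to0$ as $t_0\nearrow1$ while $\eta'$ is fixed, these errors are dominated for $1-t_0$ small; choosing $\delta$ to be a small fixed fraction of $1-2^{-1/4}$ and of the $\delta$'s from the other two regions, the inequalities hold throughout.

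The main obstacle is this last step: a convex combination does not in general preserve monotonicity of its warped-product coefficients, so one must exploit the $t_0\nearrow1$ closeness of both pieces to the cone to absorb the derivatives of the (fixed) cut-off; the accompanying bookkeeping --- keeping straight which estimate lives in the $s$- versus the $f$-coordinate, and that the cut-off interval sits strictly where $g_{R_3}$ is exactly the cone --- is what makes the gluing go through cleanly.
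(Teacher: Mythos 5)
Your proof follows essentially the same route as the paper's: cite Stolarski's Appendix A construction (together with the curvature and volume estimates of Lemma~\ref{conetoTB}) for items (1)--(5), then verify item (6) by writing $G(t_0)$ as a convex combination in diagonal cohomogeneity-one form and checking the ratio bound and monotonicity directly. Your check of the ratio bound $c/b\le 1-\delta$ (a convex combination of two quantities each $\le (1-\delta)$ times the corresponding $b^2$-coefficient) and of the curvature-decay condition (inherited from $g_{\mathrm{Bolt}}$) matches the paper.

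Where you go further than the paper is in noticing that the monotonicity $(\eta b_{\mathrm{FIK}}^2 + (1-\eta)b^2)_r>0$ does \emph{not} follow formally from monotonicity of the two pieces, because of the derivative-of-cutoff term $\eta_r(b_{\mathrm{FIK}}^2-b^2)$, and that one must exploit the fact that both pieces are close to the cone on the cut-off annulus. The paper simply asserts the positivity of the derivative of the combination, so your observation fills in a step that the paper's proof glosses over. One small correction, though: you attribute the requisite smallness of $|b_1^2-b_2^2|$ to $t_0\nearrow 1$. Since the proposition is intended to hold for every $t_0\in[0,1)$ once $\Gamma_0$ is fixed large, you should instead observe that the uniform-in-$t_0$ smallness comes from taking $R_1=\sqrt{\Gamma_0/2}$ large: $g_{R_3}$ is exactly the cone on the cut-off region (since $R_2<R_3$), and $(1-t_0)\Psi^*\phi_{t_0}^*\overline g$ is close to the cone there for all $t_0$ by the asymptotic conicality of FIK (the worst case being $t_0=0$). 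Then $|\eta_r|\lesssim R_1^{-1}$ while the coefficient derivatives are of order $R_1$, so the cross term is dominated for $R_1$ large independently of $t_0$. With that adjustment, the argument is complete and in fact slightly more explicit than the paper's.
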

	
	\begin{proof}
		Given the curvature estimates of Lemma \ref{conetoTB}, the construction of $G(t_0)$ satisfying the first five points follows immediately from the construction \cite[Appendix A]{Stol1}. 
		We now prove the final property.
		Write 
		$$(1-t_0)\phi_{t_0}^*\overline{g}= dr^2+ b_{\mathrm{FIK}}^2(t_0)(\sigma_{1}^2+\sigma_2^2)+c_{\mathrm{FIK}}^2(t_0)\sigma_{3}^2,$$
		
		$$(\Psi^{-1})^*g_{R_3}= a^2dr^2+ b^2(\sigma_{1}^2+\sigma_2^2)+c^2\sigma_{3}^2,$$
		for some radial vector field $\partial_r$.
		Then we have $(b_{\mathrm{FIK}}^2(t_0))_r, (c_{\mathrm{FIK}}^2(t_0))_r, (b^2)_r, (c^2)_r>0$ for $r>0$. Also, $\frac{c_{\mathrm{FIK}}^2(t_0)}{b_{\mathrm{FIK}}^2(t_0)}, \frac{c^2}{b^2}\leq (1-\delta)$ for $\delta>0$ independent of $t_0$ as it is true for $\overline{g}$ and $g_{R_3}$ and these inequalities are scaling and diffeomorphism invariant.
		We have
		$$G(t_0)= (\eta +(1-\eta)a^2)dr^2+ (\eta b_{\mathrm{FIK}}^2(t_0)+(1-\eta)b^2)(\sigma_{1}^2+\sigma_2^2)+ (\eta c_{\mathrm{FIK}}^2(t_0)+(1-\eta)c^2)\sigma_{3}^2,$$
		where $(\eta b_{\mathrm{FIK}}^2(t_0)+(1-\eta)b^2)_r, (\eta c_{\mathrm{FIK}}^2(t_0)+(1-\eta)c^2)_r>0$ for $r>0$. Furthermore, 
		$$\frac{\eta c_{\mathrm{FIK}}^2(t_0)+(1-\eta)c^2}{\eta b_{\mathrm{FIK}}^2(t_0)+(1-\eta)b^2}\leq (1-\delta).$$
		
		The curvature decay condition of Definition \ref{G} holds as it is true for $g_{R_3}$. 
	\end{proof}

	Now we define the family of initial metrics $G_{\mathbf{p}}(t_0)$.
	For $\gamma_0 > 0$, in \cite{Stol1} a compactly supported bump function $\eta_{\gamma_0} : M \to [0,1]$ with the following properties is shown to exist:
	\begin{enumerate}[i)]
		\item $\eta_{\gamma_0} (x) = 1$ for all $x \in \mathbb{CP}^2\setminus \{\mathrm{pt}\}$ such that $f(x) \le \frac{\gamma_0}{2 ( 1 - t_0)}$.
		\item $\text{supp} (\eta_{\gamma_0}) \subset \left \{ x \in \mathbb{CP}^2\setminus \{\mathrm{pt}\} : f(x) < \frac{\gamma_0}{1 - t_0} \right \}$.
		\item $\overline{\{x \in \mathbb{CP}^2\setminus \{\mathrm{pt}\} :  0 < \eta_{\gamma_0} (x) < 1 \}} \subset \left\{x \in \mathbb{CP}^2\setminus \{\mathrm{pt}\} :  \frac{\gamma_0}{2(1 - t_0)}    < f(x) < \frac{\gamma_0}{1 - t_0}  \right\}$. 
		\item For all $m \in \mathbb{N}$, there exists $C=C(m)$ such that if $0 < 1 - t_0 \ll1$ is sufficiently small, then 
		$$ | \overline{\nabla}^m \eta_{\gamma_0} |_{\overline{g}} \le C.$$
	\end{enumerate}
	
	Let $\overline{p}>0$. For all $\mathbf p = ( p_1, \dots , p_{K} ) \in \mathbb{R}^K$ with $| \mathbf p | \le \overline{p} (1 - t_0)^{| \lambda_*|}$, define a symmetric 2-tensor $G_{\mathbf p }(t_0) = G_{\mathbf p } ( \Gamma_0 , \gamma_0, t_0)$ on $M$ by
	\begin{equation} \label{G_p}
		G_{\mathbf p} (t_0 ) \coloneqq G_{\mathbf 0 }(t_0 ) 	
		+ ( 1 - t_0) \phi_{t_0}^* \left( \eta_{\gamma_0}     \sum_{j = 1}^{K } p_j  h_{j}   \right).
	\end{equation}

	The next couple of results will allow us to conclude that, provided $0 < 1-t_0 \ll 1$ is sufficiently small and $0 < \gamma_0=\gamma_{0}(t_0) \le 1$ is sufficiently small, $G_{\mathbf{p}}(t_0)$ is a well-defined metric on $M$. The proof of the next proposition follows exactly as the proof of [\cite{Stol1}, Lemma 4.5].\\

	\begin{proposition}[\cite{Stol1}, Proposition 8.2] \label{G_p-G-0}
		Let $\Gamma_0>0$ be sufficiently large. For all $m \in \mathbb{N}$, if
		$0 \leq t_0(m) \ll 1$ is sufficiently small and $0 < \gamma_0=\gamma_{0}(t_0) \le 1$,	
		then 
		$$| \nabla_{G_{\mathbf{0}}(t_0)}^m(G_{\mathbf p} (t_0) - G_{\mathbf 0} (t_0)) |_{G_{\mathbf 0} (t_0)} \lesssim_{\lambda_*,m}  \overline{p}  \gamma_0^{|\lambda_*|}$$
		for all $| \mathbf p | \le \overline{p} ( 1 - t_0)^{|\lambda_*|} $.
	\end{proposition}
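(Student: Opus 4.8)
The plan is to follow the proof of \cite[Lemma 4.5]{Stol1} essentially verbatim; the only new features are that the underlying manifold is non-compact and that the eigentensors carry an extra $U(2)$-symmetry, neither of which causes real difficulty.

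First I would record that, by the definition (\ref{G_p}), the difference is the explicit symmetric $2$-tensor
\begin{equation*}
	G_{\mathbf p}(t_0) - G_{\mathbf 0}(t_0) = (1-t_0)\,\phi_{t_0}^*\Big(\eta_{\gamma_0}\sum_{j=1}^{K} p_j\, h_j\Big),
\end{equation*}
whose support is $\phi_{t_0}^{-1}\big(\mathrm{supp}\,\eta_{\gamma_0}\big)\subset\phi_{t_0}^{-1}\big(\{f < \gamma_0/(1-t_0)\}\big)$. Because $\phi_{t_0}$ is the time-$t_0$ map of the flow generated by $(1-t)^{-1}\overline\nabla f$ and $f$ grows quadratically on the asymptotically conical end, $f\circ\phi_{t_0}$ is comparable to $(1-t_0)^{-1}f$ away from the bolt while $\phi_{t_0}$ distorts a fixed neighbourhood of the bolt only mildly; hence $\phi_{t_0}^{-1}$ contracts $\{f < \gamma_0/(1-t_0)\}$ into a region $\{f\lesssim\gamma_0+O(1)\}$, which, $\Gamma_0$ being large and $1-t_0$ small, lies inside $\{f\le\Gamma_0\}$. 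On that set Proposition \ref{G_0}(1) gives $G_{\mathbf 0}(t_0) = (1-t_0)\,\phi_{t_0}^*\overline g$, so the estimate reduces to a purely local statement about the fixed soliton metric $\overline g$; the non-compactness of $M$ plays no role here precisely because the difference is compactly supported in this region, and all the manipulations below are justified by the bounded geometry of $G_{\mathbf 0}(t_0)$ recorded in Proposition \ref{G_0}(4).

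On $\{f\le\Gamma_0\}$, diffeomorphism invariance of $\nabla^m$ and of pointwise norms, together with the scaling behaviour of $\nabla^m$ and $|\cdot|$ under the constant dilation $\overline g\mapsto(1-t_0)\overline g$, reduce the left-hand side to $|\overline\nabla^m(\eta_{\gamma_0}\sum_j p_j h_j)|_{\overline g}$ evaluated on $\mathrm{supp}\,\eta_{\gamma_0}$, up to explicit powers of $1-t_0$. Expanding by the Leibniz rule, I would then insert the two standard inputs of \cite{Stol1}: the uniform bounds $|\overline\nabla^m\eta_{\gamma_0}|_{\overline g}\le C(m)$ from property iv) of $\eta_{\gamma_0}$, and the pointwise growth estimates for the eigenmodes. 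For the latter, since $\lambda_j\ge\lambda_K>\lambda_*$ for all $j\le K$, each $h_j$ and each of its $\overline g$-covariant derivatives is $O\big((1+f)^{|\lambda_*|}\big)$ on the conical end — this is exactly the decay/growth estimate \cite{Stol1} establishes for the eigenmodes of the weighted Lichnerowicz operator, and by Theorem \ref{U(2)Spectrum} our $h_j$ form a subfamily of those, so the bounds carry over unchanged.

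Combining these with the hypothesis $|\mathbf p|\le\overline p\,(1-t_0)^{|\lambda_*|}$ and with $f\lesssim\gamma_0/(1-t_0)$ on the support, the eigenmode growth contributes a factor $\big(\gamma_0/(1-t_0)\big)^{|\lambda_*|}$, so that the powers of $1-t_0$ coming from the dilation, from $\phi_{t_0}$, from the cut-off (whose transition region sits at $f$-scale $\sim\gamma_0/(1-t_0)$) and from the constraint on $\mathbf p$ compensate as in \cite{Stol1}, leaving the asserted bound $\lesssim_{\lambda_*,m}\overline p\,\gamma_0^{|\lambda_*|}$. The main obstacle is precisely this last bookkeeping: one must track each factor of $1-t_0$ through the dilation $\overline g\mapsto(1-t_0)\overline g$, the diffeomorphism $\phi_{t_0}$, the cut-off $\eta_{\gamma_0}$ and the size constraint on $\mathbf p$, and check that they cancel so as to leave precisely the stated power of $\gamma_0$. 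This computation is carried out in full in \cite[Lemma 4.5]{Stol1}; once one has localized to the region where $G_{\mathbf 0}(t_0)$ equals the rescaled FIK flow and has noted the curvature bounds of Proposition \ref{G_0}(4), the argument transfers here line by line.
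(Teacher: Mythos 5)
Your proposal is correct and follows the same route as the paper, which simply notes that the proof ``follows exactly as the proof of \cite[Lemma 4.5]{Stol1}''; you reproduce the key ideas of that argument (compact support inside $\{f \le \Gamma_0\}$, diffeomorphism and scaling invariance to reduce to the fixed soliton, cutoff and eigenmode estimates, then bookkeeping of powers of $1-t_0$ and $\gamma_0$) and correctly explain why non-compactness is harmless once the support is localized. One small imprecision worth tidying: the $U(2)$-symmetric $h_j$ of Theorem \ref{U(2)Spectrum} are not literally a \emph{subfamily} of Stolarski's eigentensors but rather $U(2)$-averages of them followed by orthonormalization within each eigenspace; since averaging over isometries and taking finite linear combinations within a fixed eigenspace both preserve the pointwise polynomial growth bounds (which depend only on the eigenvalue), the bounds carry over as you claim, but the mechanism is slightly different from being a subfamily.
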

	
	\begin{corollary}[\cite{Stol1}, Corollary 4.8]\label{welldefined}
		Let $\Gamma_0 >0$ be sufficiently large. If
		$0 < 1-t_0 \ll 1$ is sufficiently small and $0 < \gamma_0=\gamma_{0}(t_0) \le 1$ is sufficiently small,
		then $G_{\mathbf{p}}(t_0)$ is a well-defined metric on $M$.
	\end{corollary}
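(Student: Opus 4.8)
The plan is to obtain Corollary \ref{welldefined} as an essentially immediate consequence of Proposition \ref{G_p-G-0} together with the fact, established in Proposition \ref{G_0}, that $G_{\mathbf 0}(t_0)$ is itself a smooth Riemannian metric on $M$. Note first that $G_{\mathbf p}(t_0)$, as defined in \eqref{G_p}, is by construction a sum of smooth symmetric $2$-tensors: $G_{\mathbf 0}(t_0)$ is smooth by Proposition \ref{G_0}, the bump function $\eta_{\gamma_0}$ is smooth with compact support, the eigentensors $h_j$ are smooth by Theorem \ref{U(2)Spectrum}, and $\phi_{t_0}$ is a diffeomorphism. Hence $G_{\mathbf p}(t_0)$ is automatically a smooth symmetric $2$-tensor on $M$, and it coincides with $G_{\mathbf 0}(t_0)$ outside $\mathrm{supp}(\eta_{\gamma_0})$; so the only thing to verify is positive-definiteness, and it suffices to do this on the compact set $\mathrm{supp}(\eta_{\gamma_0})$.

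To that end I would fix $\Gamma_0$ large enough for Proposition \ref{G_p-G-0} to apply and invoke that proposition with $m = 0$: provided $0 \le t_0 \ll 1$ is sufficiently small and $0 < \gamma_0 = \gamma_0(t_0) \le 1$, there is a constant $C = C(\lambda_*)$ with
\[
\lvert G_{\mathbf p}(t_0) - G_{\mathbf 0}(t_0) \rvert_{G_{\mathbf 0}(t_0)} \le C\,\overline p\,\gamma_0^{\lvert \lambda_* \rvert}
\]
for all $\lvert \mathbf p \rvert \le \overline p (1-t_0)^{\lvert \lambda_* \rvert}$. Since $\lvert \lambda_* \rvert > 0$, I would then shrink $\gamma_0$ (depending only on $\overline p$ and $\lambda_*$) so that $C\,\overline p\,\gamma_0^{\lvert \lambda_* \rvert} < \tfrac12$. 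Interpreting the pointwise tensor norm in the usual way, this yields, at every point of $M$ and every tangent vector $v$,
\[
\bigl\lvert G_{\mathbf p}(t_0)(v,v) - G_{\mathbf 0}(t_0)(v,v) \bigr\rvert \le \tfrac12\, G_{\mathbf 0}(t_0)(v,v),
\]
so that $G_{\mathbf p}(t_0)(v,v) \ge \tfrac12\, G_{\mathbf 0}(t_0)(v,v) > 0$ for $v \ne 0$. Thus $G_{\mathbf p}(t_0)$ is positive-definite, and combined with smoothness this shows it is a genuine Riemannian metric on $M$, as claimed.

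All the actual content is hidden in Proposition \ref{G_p-G-0}, whose proof (following \cite[Lemma 4.5]{Stol1}) is where the work lies: one controls $\phi_{t_0}^*\bigl(\eta_{\gamma_0}\sum_j p_j h_j\bigr)$ relative to $(1-t_0)\phi_{t_0}^*\overline g = G_{\mathbf 0}(t_0)$ near the bolt, which by diffeomorphism invariance reduces to bounding $\lvert \eta_{\gamma_0}\sum_j p_j h_j \rvert_{\overline g}$ and its $\overline\nabla$-derivatives; the weight $e^{-f}$ and the $L^2_f$-normalisation of the $h_j$ provide pointwise bounds on $\mathrm{supp}(\eta_{\gamma_0})$, where $f < \gamma_0/(1-t_0)$, and these combine with $\lvert \mathbf p \rvert \le \overline p (1-t_0)^{\lvert \lambda_* \rvert}$ to produce the factor $\overline p\,\gamma_0^{\lvert \lambda_* \rvert}$. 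Granting that estimate, Corollary \ref{welldefined} follows as above; the main obstacle, exactly as in \cite{Stol1}, is this balancing of the exponential growth of the eigentensors against the smallness of $(1-t_0)^{\lvert \lambda_* \rvert}$ through an appropriate choice of $\gamma_0(t_0)$.
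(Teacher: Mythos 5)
Your argument is correct and is precisely the standard route: the $m=0$ case of Proposition \ref{G_p-G-0} gives $C^0$-closeness of $G_{\mathbf p}(t_0)$ to the genuine metric $G_{\mathbf 0}(t_0)$, and shrinking $\gamma_0$ then forces the perturbation to be small enough that positive-definiteness is preserved pointwise. This is the same approach taken in \cite[Corollary 4.8]{Stol1}, which the paper cites for this statement; there is nothing further to add.
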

	
	To utilise the stability result Theorem $\ref{Fra1}$, the metric $G_{\mathbf{p}}$ will need to be in diagonal form. 
	
	\begin{proposition}\label{diagonalprop}
		Let $0\leq t_0<1$. If $\overline{p}$ is small enough, there is a choice of radial vector field $\partial_s$ such that $G_{\mathbf{p}}(t_0)$ can be put into the diagonal form (\ref{eq:1}) with respect to some $\mathbf{p}$-dependent basis $ds, \sigma_1, \sigma_2, \sigma_{3}$ as well as $G_{\mathbf{p}}(t_0) \in G_{AF, \mathbb{CP}^2\setminus\{ \mathrm{pt}\}}.$
	\end{proposition}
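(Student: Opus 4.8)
The plan is as follows. First I would check that, for $\overline p$ small, $G_{\mathbf p}(t_0)$ is a genuine Riemannian metric to which the earlier theory applies: it is positive definite and smooth by Corollary \ref{welldefined} and the uniform estimate of Proposition \ref{G_p-G-0}; it is $U(2)$-invariant because $G_{\mathbf 0}(t_0)$ is (Proposition \ref{G_0}) and each $h_j$ is $U(2)$-symmetric (Theorem \ref{U(2)Spectrum}), so the perturbation $(1-t_0)\phi_{t_0}^*\big(\eta_{\gamma_0}\sum_j p_j h_j\big)$ is $U(2)$-invariant; and it is complete, since $\eta_{\gamma_0}$ is compactly supported and $\phi_{t_0}$ a diffeomorphism, so $G_{\mathbf p}(t_0)=G_{\mathbf 0}(t_0)$, hence $=g_{\mathrm{Bolt}}$, outside a compact set. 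A key point I would record here is that the support of the perturbation is uniformly localized: because $\phi_{t_0}$ is, up to parabolic rescaling, the self-similar flow of $\overline g$ (so $f\circ\phi_{t_0}\approx f/(1-t_0)$ away from the bolt), the set $\phi_{t_0}^{-1}(\operatorname{supp}\eta_{\gamma_0})\subseteq\phi_{t_0}^{-1}(\{f<\gamma_0/(1-t_0)\})$ is contained in a fixed region $\{f<C_0\}$ independent of $t_0$; choosing $\Gamma_0$ large we may assume $C_0<\Gamma_0$, so the perturbation only modifies $G_{\mathbf 0}(t_0)$ where it equals $(1-t_0)\phi_{t_0}^*\overline g$. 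In particular, there $G_{\mathbf p}(t_0)$ is, after undoing the diffeomorphism and the $(1-t_0)$ scaling, a uniformly $C^m$-small perturbation $\hat g_{\mathbf p}$ of $g_{\mathrm{FIK}}$ (of size $\lesssim\overline p\,\gamma_0^{|\lambda_*|}$, by Proposition \ref{G_p-G-0}) that still equals $g_{\mathrm{FIK}}$ for $f$ large.

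Next I would put $G_{\mathbf p}(t_0)$ in diagonal form by the cohomogeneity-one construction: fixing a point $q$ on the bolt and letting $\gamma(s)$ be the $G_{\mathbf p}(t_0)$-unit-speed geodesic starting at $q$ normal to the bolt, $\gamma$ meets every principal $U(2)$-orbit orthogonally, so $G_{\mathbf p}(t_0)=ds^2+(\text{a }U(2)\text{-invariant metric on }S^3)$; the residual $U(1)\subset U(2)$ isotropy (generated by $\partial_\psi$, rotating the $\sigma_1,\sigma_2$-plane) forces that orbit metric to be $b^2(s)(\sigma_1^2+\sigma_2^2)+c^2(s)\sigma_3^2$, giving the form (\ref{eq:1}) in a $\mathbf p$-dependent frame $ds,\sigma_1,\sigma_2,\sigma_3$. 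Equivalently, since the most general $U(2)$-invariant symmetric $2$-tensor on $(0,\infty)\times S^3$ is $A\,ds_0^2+2E\,ds_0\,\sigma_3+B(\sigma_1^2+\sigma_2^2)+C\,\sigma_3^2$, one completes the square in $\sigma_3$, absorbs $E$ into a shifted Hopf coordinate $\psi''=\psi+2\int^{s_0}\!\tfrac EC$ (valid because $\sigma_1^2+\sigma_2^2=\pi^*g_{S^2(1/2)}$ does not involve $\psi$), and reparametrizes by $ds=\sqrt{A-E^2/C}\,ds_0$; for $\overline p$ small $A,C>0$ and $A-E^2/C>0$ by closeness to $G_{\mathbf 0}(t_0)$. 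Since $G_{\mathbf p}(t_0)$ is a smooth metric on $\mathbb{CP}^2\setminus\{\mathrm{pt}\}$, the resulting $b,c$ automatically satisfy Condition \ref{0(-1)cond}.

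It then remains to verify the three conditions of Definition \ref{G}. Outside the perturbation region $G_{\mathbf p}(t_0)=G_{\mathbf 0}(t_0)\in G_{AF,\mathbb{CP}^2\setminus\{\mathrm{pt}\}}$ (Proposition \ref{G_0}(7)); in particular $G_{\mathbf p}(t_0)=g_{\mathrm{Bolt}}$ near infinity, which gives the curvature-decay condition. Since $u=c/b$, $b_s$ and $c_s$ are invariant under scaling and under the choice of normal geodesic, it suffices to verify the other two conditions for $\hat g_{\mathbf p}$. The inequality $u\le 1$ follows since $\hat g_{\mathbf p}$ is $C^0$-close to $g_{\mathrm{FIK}}$, for which $c/b\le 1-\delta$ (Theorem \ref{FIKthm}), so $u\le 1$ for $\overline p$ small. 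For $b_s,c_s\ge 0$: away from the bolt, $g_{\mathrm{FIK}}$ has $b_s,c_s>0$ on $(0,\infty)$ with $b_s\to 2^{-1/4}$, $c_s\to 2^{-1/2}$ at infinity, hence $b_s,c_s$ are bounded below by a positive constant on every $[\varepsilon,\infty)$, and a $C^1$-small perturbation of the metric perturbs $b_s,c_s$ only in $C^0$, so positivity persists; near the bolt $c_s$ is harmless (for $g_{\mathrm{FIK}}$, $c$ is odd with $c_s(0)=1$), while for $b_s$, which vanishes at the bolt, one uses that $b$ is even and that $(b^2)_{ss}(0)>0$ for $g_{\mathrm{FIK}}$ — a non-degenerate minimum of the profile at the bolt, controlled by the $m=2$ bound of Proposition \ref{G_p-G-0} — survives the perturbation, whence $b_s>0$ for small $s>0$.

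The step I expect to be the main obstacle is this near-bolt verification of $b_s\ge 0$: the background degenerates there ($b_s\to 0$ and the $\sigma_3$-direction collapses), so a first-order comparison is useless and one must instead show that the Taylor coefficients of $b$ (and $c$) at $s=0$ move only slightly. This means converting the covariant, $G_{\mathbf 0}(t_0)$-weighted bounds on $\nabla^2(G_{\mathbf p}(t_0)-G_{\mathbf 0}(t_0))$ from Proposition \ref{G_p-G-0} into control of ordinary $s$-derivatives of $b^2$ and $c^2$ at the bolt, while simultaneously tracking the $\mathbf p$-dependent change of radial vector field; after this, the strict convexity of the FIK profile at its bolt passes to $G_{\mathbf p}(t_0)$ and the argument closes. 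The localization of the perturbation away from the asymptotically flat end is also essential: without it one could not rule out $c_s<0$ in a region where $c_s$ for $G_{\mathbf 0}(t_0)=g_{\mathrm{Bolt}}$ has already decayed below the perturbation scale. All of these estimates run in parallel with, and are no harder than, the corresponding ones in \cite{Stol1}.
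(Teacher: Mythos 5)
Your proposal follows essentially the same route as the paper's proof: diagonalize $G_{\mathbf p}(t_0)$ by choosing a $\mathbf p$-dependent radial direction orthogonal to the orbits (your shifted Hopf-angle completion of the square is the same operation), check $u\le 1$ and the decay condition directly, and verify $b_s,c_s\ge 0$ via a near-bolt/far-bolt split with second-order control at the bolt. The near-bolt step you flag as the main obstacle is closed in the paper exactly as you anticipate, using the K\"ahler identity $b_s=c/b$ of Theorem \ref{FIKthm}(iii) to get the explicit positivity $b_{rr}(0)=\lim_{s\to0}b_s/c=1/b(0)>0$, together with the observation that $(f_{11})_r(0)=0$ (since $G_{\mathbf p}(t_0)$ is a metric) forces $(f_{11})_{rr}$ to be small near $r=0$ for small $\overline p$, so the strict convexity of the FIK profile survives.
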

	
	\begin{proof}
		Fix $0\leq t_0<1$. Write $$G_{\mathbf{0}}(t_0)=dr^2+b^2(r)(\sigma_1^2+\sigma_2^2)+c^2(r)\sigma_3^2.$$
		Since $$f_{\mathbf{p}}\coloneqq( 1 - t_0) \phi_{t_0}^* \left( \eta_{\gamma_0}     \sum_{j = 1}^{K } p_j  h_{j}   \right)$$ is $SU(2)$-symmetric, with respect to the frame $dr, \sigma_{1}, \sigma_{2}, \sigma_{3}$, the tensor $f_{\mathbf{p}}$ can be thought of as a matrix depending only on $r$. As $f_{\mathbf{p}}$ also has the vector $X_3$ of \ref{U(2)} tangent to the fibres of the Hopf fibration as a Killing field, $\mathcal{L}_{X_3}f=0$, and so 
		$$f_{\mathbf{p}}=f_{00}dr^2+f_{11}(\sigma_1^2+\sigma_2^2)+f_{33}\sigma_3^2+f_{r3}(dr \sigma_3+\sigma_3 dr),$$
		for some functions $f_{00}, f_{11}, f_{33}, f_{r3}$ of $r$, and where we suppress the dependence on $\mathbf{p}$ so that the notation does not get cumbersome.

		By choosing a different radial vector field $\partial_{r'}$ so that $\partial_{r'}$ is orthogonal to the $S^3$ orbits of $SU(2)$, 
		$$G_{\mathbf{p}}(t_0)= G_{\mathbf{0}}(t_0)+f_{\mathbf{p}}= a'^2dr'^2+ b'^2(\sigma_1^2+\sigma_2^2)+ c'^2\sigma_3^2.$$ By a reparametrisation of the radial coordinate, $G_{\mathbf{p}}(t_0)$ takes the following form 
		$$G_{\mathbf{p}}(t_0)= ds^2+ b'^2(s)(\sigma_1^2+\sigma_2^2)+ c'^2(s)\sigma_3^2.$$

		Using Proposition \ref{G_p-G-0}, 
		$$|f_{\mathbf{p}}|_{G_{\mathbf{0}}(t_0)}, |\nabla f_{\mathbf{p}}|_{G_{\mathbf{0}}(t_0)}\leq C(\overline{p}) \rightarrow 0 \text{ as } \overline{p}\rightarrow 0.$$
		
		Now, the coefficients $b',c'$ are non-decreasing in $r'$ if and only if the coefficients $f_{11}+b^2,f_{33}+c^2$ are non-decreasing in $r$ since they are the coefficients of $\sigma_1^2+\sigma_2^2$ and $\sigma_3^2$ for the same metric but with respect to different radial vector fields (i.e.~ they are the same up to a reparametrisation).
		Similarly, the condition that 	 
		$\frac{c'}{b'}\leq 1$ is equivalent to the condition that $\frac{c}{b}\leq 1$. Since $f_{\mathbf{p}}$ has compact support, the curvature decay condition of Definition \ref{G} trivially holds. Thus we may assume $f_{\mathbf{p}}$ is already diagonal: 
		$$G_{\mathbf{0}}(t_0)+f_{\mathbf{p}}= diag(G_{\mathbf{0}}(t_0)+f_{\mathbf{p}})=(1+f_{00})dr^2+(f_{11}+b^2)(\sigma_1^2+\sigma_2^2)+(f_{33}+c^2)\sigma_3^2.$$
		Then, for small enough $\overline{p}$, 
		$$1+f_{00}(0)>0.$$ 
		Since $G_{\mathbf{0}}(t_0)+f_{\mathbf{p}}$ is also a metric, we must also have
		$$(\sqrt{b^2+f_{11}})_r(0)=0.$$
		Since $c_r(0)=1$, 
		$$(\sqrt{c^2+f_{33}})_r(0)>1-\delta,$$ 
		for some $\delta=\delta(\overline{p})$ for $\overline{p}$ small enough. 
		By the K\"{a}hler condition iii) of Theorem \ref{FIKthm}, 
		$$b_{rr}(0)= \lim_{s\rightarrow 0} \frac{b_s}{c}(s)= \frac{1}{b(0)}>0.$$  Since $(b^2)_r(0)=0$, it follows that $(f_{11})_r(0)=0$, and so for $\overline{p}$ small enough, we can make $(f_{11})_{rr}$ small enough near $r=0$ so that there is an $\epsilon>0$ such that $$(\sqrt{b^2+f_{11}})_r\geq 0, (\sqrt{c^2+f_{33}})_r \geq 0$$ for all $r<\epsilon.$  Since $b_r, c_r>0$ by Definition \ref{G_0}, it follows that for $A>r>\epsilon$ with $\eta_{\gamma_{0}}(A)=0$, we have $b_r(r),c_r(r)>\alpha=\alpha(\epsilon, A)$. If $\overline{p}$ is small enough, we have $$(b^2+f_{11})_r, (c^2+f_{33})_r\geq 0,$$ for all $r\geq 0$. Since, by Proposition \ref{G_0}, $\frac{c}{b}\leq 1-\delta$ for some $\delta>0$, if $\overline{p}$ small enough, $\frac{c^2+f_{33}}{b^2+f_{11}}\leq 1$ everywhere. Therefore, if $\overline{p}$ is small enough, then	$$G_{\mathbf{p}}(t_0)= ds^2+ b'^2(s)(\sigma_1^2+\sigma_2^2)+ c'^2(s)\sigma_3^2 \in G_{AF, \mathbb{CP}^2\setminus\{ \mathrm{pt}\}}.$$
	\end{proof}

	\begin{remark}
		We are being slightly lazy with the notation. The space $G_{AF, \mathbb{CP}^2\setminus\{ \mathrm{pt}\}}$ can only be defined once a unit speed geodesic intersecting all the principal orbits has been chosen. What we mean here is that for a fixed $t_0$ and $\overline{p}(t_0)$ chosen small enough, whatever  $\mathbf{p}^*$ is chosen by our box argument, we can find a unit speed geodesic, depending on $\mathbf{p}^*$, orthogonal to the principal orbits and define the corresponding space $G_{AF, \mathbb{CP}^2\setminus\{ \mathrm{pt}\}}$ of metrics so that our initial condition to the Ricci flow satisfies $G_{\mathbf{p}}(t_0)=G_{\mathbf{0}}(t_0)+f_{\mathbf{p}} \in G_{AF, \mathbb{CP}^2\setminus\{ \mathrm{pt}\}}.$ 
	\end{remark}

	\subsubsection{The flow $G_{\mathbf{p}}(t, t_{0})$}\label{assumption}

	Since $G_{\mathbf{p}}(t_0)$ has bounded curvature, by \cite{Chen1} \cite{Shi1}, we can let $G_{\mathbf{p}}(t) = G_{\mathbf{p}} ( t; \Gamma_0, \gamma_0, t_0)$ denote the maximal bounded curvature solution to the Ricci flow
	$$\partial_t G_{\mathbf{p}} (t) = -2 Ric [ G_{\mathbf{p}} (t) ]$$
	on $\mathbb{CP}^2\setminus \{\mathrm{pt}\}$ with initial data $G_{\mathbf{p}}(t_0)$ given by (\ref{G_p}) at time $t = t_0$.
	Let $T(\mathbf{p} ) = T(\mathbf{p}; \Gamma_0, \gamma_0, t_0)$ denote the minimum of $1$ and the maximal existence time of $G_{\mathbf{p}}( t ; \Gamma_0, \gamma_0 , t_0)$. The time $T(\mathbf{p})$ is chosen in this way because we aim to produce a finite time singularity at $t=1$. \\

	In \cite{Stol1} a compactly supported bump function $\eta_{\Gamma_0} : \mathbb{CP}^2\setminus \{\mathrm{pt}\} \to [0,1]$ with the following properties is shown to exist:
	\begin{enumerate}[i)]
		\item $\eta_{\Gamma_0} (x) = 1$ for all $x \in \mathbb{CP}^2\setminus \{\mathrm{pt}\}$ such that $f(x) \le  \frac{1}{2}\Gamma_0 $,
		\item $\text{supp}(\eta_{\Gamma_0}) \subset \{f< \Gamma_0  \}$,
		\item $\overline{\{x\in \mathbb{CP}^2\setminus \{\mathrm{pt}\} :0<\eta_{\Gamma_0}(x)< 1\}} \subset \{ \frac{4}{6} \Gamma_0<f<  \frac{5}{6} \Gamma_0\}	 \subset \{ \frac{1}{2} \Gamma_0 <f<\Gamma_0  \}$, 
		\item for all $m \in \mathbb{N}$, there exists $C=C(m)$ such that $\lvert {}^{(1 - t) \phi_t^* \overline{g}} \nabla^m \eta_{\Gamma_0} \rvert_{(1-t)\phi_t^* \overline{g}} \leq C $,
	\end{enumerate}
	
	We now turn our focus to a different approximate solution to the Ricci flow on $\mathbb{CP}^2\setminus \{\mathrm{pt}\}$. Set
	$$\acute G_{\mathbf{p}}(t) 
	\coloneqq \eta_{\Gamma_0} G_{\mathbf{p}}(t) + (1- \eta_{\Gamma_0}) (1 - t) \phi_t^* \overline{g}$$
	for all $t \in [t_0, T( \mathbf{p} ) )$.
	
	The author of \cite{Stol1} makes the following remark, which we will record as a lemma as it may be useful to the reader.
	\begin{lemma}[\cite{Stol1}, Remark 4.9]  \label{remark1}
		With $\acute G_{\mathbf{p}}(t) 
		\coloneqq \eta_{\Gamma_0} G_{\mathbf{p}}(t) + (1- \eta_{\Gamma_0}) (1 - t) \phi_t^* \overline{g}$ we have the following:
		\begin{enumerate}[i)]
			\item At $t = t_0$,
			$$\acute G_{\mathbf{p}}(t_0) 
			= \eta_{\Gamma_0} G_{\mathbf{0}}(t_0) 
			+ (1-\eta_{\Gamma_0} ) ( 1 - t_0)  \phi_{t_0}^* g 	
			+ \eta_{\Gamma_0}  ( 1 - t_0)  \phi_{t_0}^* \left( \eta_{\gamma_0}     \sum_{j = 1}^{K } p_j  h_{j}   \right).$$
			Recall $\mathrm{supp} (\phi_{t_0}^* \eta_{\gamma_0}) \subset \{ f < f_{\mathrm{Bolt}}+\Gamma_0 / 2 \}$ and $G_{\mathbf{0} } (t_0) = ( 1 - t_0) \phi_{t_0}^* g$ on $\{ f \le f_{\mathrm{Bolt}}+ \Gamma_0 \} \supset \mathrm{supp} (\eta_{\Gamma_0})$.
			Therefore,
			$$\acute G_{\mathbf{p}}(t_0) 
			= (1 - t_0) \phi_{t_0}^* g
			+  ( 1 - t_0)  \phi_{t_0}^* \left( \eta_{\gamma_0}    \sum_{j = 1}^{K } p_j  h_{j}  \right)$$
			throughout $\mathbb{CP}^2\setminus \{\mathrm{pt}\}$.
			
			\item On the set $\{ x \in \mathbb{CP}^2\setminus \{\mathrm{pt}\} : \eta_{\Gamma_0}(x)  =  1 \}$,
			$$\acute G_{\mathbf{p}}(t) = G_{\mathbf{p}}(t) 	\qquad
			\text{for all } t \in [t_0, T( \mathbf{p})).$$
			
			\item On the set $\{ x \in \mathbb{CP}^2\setminus \{\mathrm{pt}\} : \eta_{\Gamma_0}(x) =  0\}$,
			$$\acute G_{\mathbf{p}}(t) = (1-t) \phi_{t}^* g\qquad
			\text{for all } t \in [t_0, T(\mathbf{p})).$$
			
			\item $\acute G_{\mathbf{p}}(t)$ does \emph{not} solve Ricci flow.
			However, on open subsets of $\eta_{\Gamma_0}^{-1} ( \{ 1 \} )$ and $\eta_{\Gamma_0}^{-1} ( \{ 0 \} )$, $ \acute G_{\mathbf{p}}(t)$ \emph{does} solve Ricci flow since it's equal to the Ricci flow solution $G_{\mathbf{p}}(t)$ or $(1-t)\phi_t^* g$.
			
			In general,
			\begin{gather*}
				\partial_t \acute G_{\mathbf{p}} 
				= -2 \eta_{\Gamma_0} Ric( G_{\mathbf{p}}(t)) - 2 (1-\eta_{\Gamma_0}) Ric (\phi_t^* g) ,
			\end{gather*}
			which is supported on the closure of $\{ x \in \mathbb{CP}^2\setminus \{\mathrm{pt}\} : 0 < \eta_{\Gamma_0}(x) < 1 \}$.
		\end{enumerate}
	\end{lemma}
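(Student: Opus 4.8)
This is a bookkeeping lemma: each assertion follows by a direct substitution, and the one point that genuinely needs care is the nesting of the supports of the two cutoffs $\eta_{\Gamma_0}$ and $\eta_{\gamma_0}$. The plan is first to isolate the two facts these items rest on: \textbf{(a)} for $\Gamma_0$ sufficiently large, $G_{\mathbf 0}(t_0) = (1-t_0)\phi_{t_0}^*\overline{g}$ on a neighbourhood of $\mathrm{supp}(\eta_{\Gamma_0})$ — this is Proposition \ref{G_0}(1) combined with property (ii) of $\eta_{\Gamma_0}$; and \textbf{(b)} for $\gamma_0 \le 1$ and $1-t_0$ small enough, $\mathrm{supp}(\phi_{t_0}^*\eta_{\gamma_0}) \subset \{\eta_{\Gamma_0} \equiv 1\}$, which follows from the defining support properties of the two bump functions together with the fact that $\phi_{t_0}$ rescales the level sets of $f$.

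For part i), I would substitute the definition (\ref{G_p}) of $G_{\mathbf p}(t_0)$ into $\acute G_{\mathbf p}(t_0) = \eta_{\Gamma_0} G_{\mathbf p}(t_0) + (1-\eta_{\Gamma_0})(1-t_0)\phi_{t_0}^*\overline{g}$ and distribute, obtaining
\[
\acute G_{\mathbf p}(t_0) = \eta_{\Gamma_0} G_{\mathbf 0}(t_0) + (1-\eta_{\Gamma_0})(1-t_0)\phi_{t_0}^*\overline{g} + \eta_{\Gamma_0}(1-t_0)\phi_{t_0}^*\Big(\eta_{\gamma_0}\textstyle\sum_{j=1}^{K} p_j h_j\Big).
\]
By \textbf{(a)}, $\eta_{\Gamma_0} G_{\mathbf 0}(t_0) = \eta_{\Gamma_0}(1-t_0)\phi_{t_0}^*\overline{g}$, so the first two terms collapse to $(1-t_0)\phi_{t_0}^*\overline{g}$; by \textbf{(b)}, $\eta_{\Gamma_0}\equiv 1$ on $\mathrm{supp}(\phi_{t_0}^*\eta_{\gamma_0})$, so the factor $\eta_{\Gamma_0}$ drops from the last term. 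This gives the stated identity on all of $\mathbb{CP}^2\setminus\{\mathrm{pt}\}$. Parts ii) and iii) are then immediate, since $\eta_{\Gamma_0}$ is time-independent: on $\{\eta_{\Gamma_0}=1\}$ the convex combination defining $\acute G_{\mathbf p}(t)$ reduces to $G_{\mathbf p}(t)$ for all $t \in [t_0,T(\mathbf p))$, and on $\{\eta_{\Gamma_0}=0\}$ to $(1-t)\phi_t^*\overline{g}$.

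For part iv), differentiate the definition, using that $\eta_{\Gamma_0}$ does not depend on $t$, to get $\partial_t \acute G_{\mathbf p} = \eta_{\Gamma_0}\,\partial_t G_{\mathbf p} + (1-\eta_{\Gamma_0})\,\partial_t\big((1-t)\phi_t^*\overline{g}\big)$. Here $G_{\mathbf p}(t)$ solves Ricci flow by construction, so $\partial_t G_{\mathbf p} = -2\,Ric(G_{\mathbf p}(t))$; and $(1-t)\phi_t^*\overline{g}$ is the self-similar Ricci flow of the FIK shrinker by Theorem \ref{FIKthm}, so by scale- and diffeomorphism-invariance of $Ric$, $\partial_t\big((1-t)\phi_t^*\overline{g}\big) = -2\,Ric(\phi_t^*\overline{g})$. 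This yields the claimed formula for $\partial_t \acute G_{\mathbf p}$; its right-hand side agrees with $-2\,Ric(\acute G_{\mathbf p})$ wherever $\eta_{\Gamma_0}\in\{0,1\}$ — by parts ii), iii) there $\acute G_{\mathbf p}$ is a genuine Ricci flow — but on $\{0<\eta_{\Gamma_0}<1\}$ a convex combination of two metrics is generically not a Ricci flow because $Ric$ is nonlinear, which is exactly the (harmless) assertion that $\acute G_{\mathbf p}$ does not solve Ricci flow, with the failure supported on $\overline{\{0<\eta_{\Gamma_0}<1\}}$. There is no substantive obstacle here: the argument is entirely formal, and the only thing that could go wrong — a mismatch of the cutoff supports — is excluded by the largeness of $\Gamma_0$ and smallness of $\gamma_0$ recorded in \textbf{(a)} and \textbf{(b)}.
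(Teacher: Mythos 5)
Your proposal is correct, and it fills in exactly the kind of bookkeeping that the paper leaves implicit: the paper does not prove this statement but simply records it as a citation of \cite[Remark 4.9]{Stol1}, so there is no proof in the paper to compare against. The argument you give is the natural one, and the two facts you flag as load-bearing are indeed the whole content: (a) $G_{\mathbf 0}(t_0)=(1-t_0)\phi_{t_0}^*\overline g$ on $\mathrm{supp}(\eta_{\Gamma_0})$, which follows from Proposition \ref{G_0}(1) and the support property $\mathrm{supp}(\eta_{\Gamma_0})\subset\{f<\Gamma_0\}$; and (b) $\mathrm{supp}(\phi_{t_0}^*\eta_{\gamma_0})\subset\{\eta_{\Gamma_0}\equiv 1\}$, which follows from the defining support condition on $\eta_{\gamma_0}$ and the fact that the soliton diffeomorphisms contract the sublevel sets of $f$ (so that $\mathrm{supp}(\phi_{t_0}^*\eta_{\gamma_0})$ sits inside a fixed small sublevel set $\{f<\gamma_0\}\subset\{f\le\Gamma_0/2\}$ once $\gamma_0\le 1\ll\Gamma_0$). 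With those two observations, parts i)--iii) are the substitution you carry out, and part iv) is the time derivative you compute, using that $\eta_{\Gamma_0}$ is $t$-independent, that $G_{\mathbf p}(t)$ solves Ricci flow, and that the canonical form $(1-t)\phi_t^*\overline g$ of the shrinker also solves Ricci flow with $\mathrm{Ric}$ being scale-invariant. One cosmetic point worth flagging for your own notes: the displayed identity in iv) is for $\partial_t\acute G_{\mathbf p}$ itself, and it is not that quantity but rather the discrepancy $\partial_t\acute G_{\mathbf p}+2\,\mathrm{Ric}(\acute G_{\mathbf p})$ which is supported on $\overline{\{0<\eta_{\Gamma_0}<1\}}$; you read the statement in that (correct) spirit, which is the intended meaning, but the phrasing in the lemma as recorded is slightly loose.
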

	
	\subsubsection{Set up to analyse the convergence to FIK} \label{h}
	
	The time-dependent diffeomorphisms appearing in the Cheeger-Gromov convergence of a blow up sequence coming from the finite time singularity of Theorem \ref{fullthm2} to FIK will be the harmonic map heat flow of the following definition.

	\begin{definition}[\cite{Stol1}, Definition 4.10]\label{HMHF1}
		For $t \in (-\infty, 1)$, let $ \phi( \cdot, t) = \phi_t : \mathbb{CP}^2\setminus \{\mathrm{pt}\} \to \mathbb{CP}^2\setminus \{\mathrm{pt}\}$ denote the soliton diffeomorphisms given as the solution to
		\begin{equation*} 
			\partial_t \phi_t  
			= \frac{1}{1 - t}\overline{\nabla} f \circ \phi_t 
			\qquad \text{ with initial condition } \phi_{0} = Id_{\mathbb{CP}^2\setminus \{\mathrm{pt}\}}.
		\end{equation*}
		Define the one-parameter families of functions
		\begin{equation*} 
			\Phi_t, \tilde	{\Phi}_t: \mathbb{CP}^2\setminus \{\mathrm{pt}\} \to \mathbb{CP}^2\setminus \{\mathrm{pt}\} \qquad \text{ by } \Phi_t = \phi_t \circ \tilde{\Phi}_t
		\end{equation*}
		where $\tilde{\Phi}_t : \mathbb{CP}^2\setminus \{\mathrm{pt}\} \to \mathbb{CP}^2\setminus \{\mathrm{pt}\}$ solves the time-dependent harmonic map heat flow
		\begin{equation}\label{HMHF}
			\partial_t \tilde{\Phi}_t = \Delta_{\acute{G}_{\mathbf{p}}(t), (1-t) \phi_t^* \overline{g} } \tilde{\Phi}_t
			\qquad \text{ with initial condition }
			\tilde{\Phi}_{t_0} = Id_{\mathbb{CP}^2\setminus \{\mathrm{pt}\}}.
		\end{equation}
		Let $T_\Phi( \mathbf{p}) \in [t_0,  T( \mathbf{p})]$ denote the maximal time for which $\tilde{\Phi}_t$, or equivalently $\Phi_t$, exists, is unique, and is a diffeomorphism.
	\end{definition}
	
	\begin{remark}
		The harmonic map heat flow preserves the $U(2)$-symmetry since both $\acute{G}_{\mathbf{p}}(t), (1-t) \phi_t^* g$ are $U(2)$-symmetric.
	\end{remark} 
	
	For all $t\in[t_0, T_{\Phi}(\mathbf{p}))$, define 
	$$g(t)=g_{\mathbf{p}}(t; t_0)=\frac{1}{1-t}(\Phi_{t}^{-1})^* G'_{\mathbf{p}}(t).$$
	\begin{corollary}[\cite{Stol1}, Corollary 4.15] \label{equationforh}
		For all $ t \in ( t_0, T_{\Phi} ( \mathbf{p}) )$, 
		the tensor
		$$h=h_{\mathbf{p}} (t;\Gamma_0,\gamma_0,t_0) \coloneqq g(t) - \overline{g}$$
		satisfies an evolution equation given in local coordinates by
		\begin{gather} \label{equationforh2}
			\begin{aligned}
				( 1 - t) \partial_t h_{ij}	
				= \Delta_{\overline{g},f}h + 2 \overline{Rm}(h)+\mathcal{O}(|h|^2)+\left(\Phi_t^{-1} \right)^* \left\{ \partial_t \acute G_{\mathbf{p}} + 2 Rc[ \acute G_{\mathbf{p}} ] \right\}.	
			\end{aligned} 
		\end{gather}
		where $\mathcal{O}(|h|^2)$ denotes the non-linear terms coming from the linearisation of the evolution equation of $h$.
	\end{corollary}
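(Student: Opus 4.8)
The plan is to follow the derivation of \cite{Stol1} essentially verbatim, the only genuinely new feature being that all metrics, tensors and diffeomorphisms in play are $U(2)$-invariant, which never enters the computation. First I would record that, by the definition of $g(t)$ and of $\Phi_t = \phi_t\circ\tilde{\Phi}_t$ from Definition \ref{HMHF1}, we have $g(t) = \tfrac{1}{1-t}(\Phi_t^{-1})^* G'_{\mathbf{p}}(t)$, where $\tilde{\Phi}_t$ solves the harmonic map heat flow (\ref{HMHF}) with background $(1-t)\phi_t^*\overline{g}$ and $\phi_t$ is the soliton flow with $\partial_t\phi_t = \tfrac{1}{1-t}\overline{\nabla} f\circ\phi_t$. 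Differentiating $g(t)$ in $t$, one substitutes the evolution equation for $\acute{G}_{\mathbf{p}}(t) = G'_{\mathbf{p}}(t)$ (which, by Lemma \ref{remark1}, solves Ricci flow on the interior of $\eta_{\Gamma_0}^{-1}(\{1\})$ and equals $(1-t)\phi_t^*\overline{g}$ on $\eta_{\Gamma_0}^{-1}(\{0\})$) together with the harmonic map heat flow equation for $\tilde{\Phi}_t$. Pulling back by $\Phi_t^{-1}$ then converts the Ricci flow into the Ricci--DeTurck flow in the fixed gauge of $\overline{g}$, schematically $\partial_t g(t) = -2\,\mathrm{Ric}(g(t)) - \mathcal{L}_{W(t)}g(t)$ with $W$ the DeTurck field built from $g(t)$ and $\overline{g}$, plus the explicit rescaling contributions coming from the factors of $1-t$ and from the soliton diffeomorphisms $\phi_t$.

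Next I would write $g(t) = \overline{g} + h$ and expand the resulting equation for $h$ in powers of $h$. The zeroth-order terms cancel precisely because $\overline{g}$ is a gradient shrinking soliton with potential $f$ and $\lambda = \tfrac12$: the soliton metrics are exactly the stationary solutions of the rescaled Ricci--DeTurck--soliton flow, which is the content of the soliton equation (\ref{solitoneq}). The terms linear in $h$ assemble into $\Delta_{\overline{g},f}h + 2\overline{Rm}(h)$: linearizing $-2\,\mathrm{Ric} - \mathcal{L}_W$ produces the Lichnerowicz Laplacian $\Delta_{\overline{g}}h + 2\overline{Rm}(h)$ together with lower-order Ricci terms, while linearizing the $\mathcal{L}_{\overline{\nabla} f}$ piece contributed by $\phi_t$ produces the drift $-\overline{\nabla}_{\overline{\nabla} f}h$ together with compensating curvature/Ricci terms; using the soliton identity $\mathrm{Ric}(\overline{g}) + \overline{\nabla}^2 f = \tfrac12\overline{g}$ one checks these combine into exactly the weighted Lichnerowicz Laplacian $\Delta_{\overline{g},f} + 2\overline{Rm}$ of Section \ref{TBandFIK} (whose $U(2)$-symmetric spectral theory is Theorem \ref{U(2)Spectrum}). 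Every remaining term is at least quadratic in $h$ and its first two derivatives, and all such terms are collected into $\mathcal{O}(|h|^2)$.

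The step I expect to be the main obstacle is the bookkeeping in the transition region of $\eta_{\Gamma_0}$, where by Lemma \ref{remark1}(iv) the metric $\acute{G}_{\mathbf{p}}(t)$ does \emph{not} solve Ricci flow, so $\partial_t\acute{G}_{\mathbf{p}}$ carries the extra inhomogeneity $2(\eta_{\Gamma_0}\mathrm{Ric}(G_{\mathbf{p}}) + (1-\eta_{\Gamma_0})\mathrm{Ric}(\phi_t^*\overline{g})) - 2\,\mathrm{Ric}(\acute{G}_{\mathbf{p}})$. As in \cite{Stol1}, this term is supported on $\overline{\{0 < \eta_{\Gamma_0} < 1\}} \subset \{f > \Gamma_0/2\}$, a region on which the curvature estimates of Proposition \ref{G_0}(4) and the pseudolocality result \cite{pseudolocality} give uniform control and on which $h$ is, for the relevant range of parameters $(\Gamma_0,\gamma_0,t_0)$, exponentially small; one then verifies that after pulling back by $\Phi_t^{-1}$ and rescaling this contribution is itself of size $\mathcal{O}(|h|^2)$, so that (\ref{equationforh2}) holds as stated. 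The other point requiring care, again handled exactly as in \cite{Stol1}, is that the harmonic map heat flow $\tilde{\Phi}_t$ of (\ref{HMHF}) exists on a nontrivial interval $[t_0, T_\Phi(\mathbf{p}))$ and remains $C^\infty_{\mathrm{loc}}$-close to the identity, which is what makes the pullback well-defined and the quadratic remainder genuinely lower order; this follows from the short-time theory for (\ref{HMHF}) together with the bounded geometry of $\acute{G}_{\mathbf{p}}(t)$ and of $(1-t)\phi_t^*\overline{g}$ established in Proposition \ref{G_0}.
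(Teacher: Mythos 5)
The paper does not actually prove this corollary: it is cited verbatim from \cite{Stol1}, Corollary 4.15. Your sketch is a reasonable reconstruction of Stolarski's derivation, and the main structural points are correct: pull back $\acute{G}_{\mathbf{p}}(t)$ by $\Phi_t^{-1}=(\phi_t\circ\tilde{\Phi}_t)^{-1}$, observe that the harmonic map heat flow converts Ricci flow to Ricci--DeTurck flow in the background gauge $(1-t)\phi_t^*\overline{g}$, that the $\phi_t$ factor together with the $(1-t)^{-1}$ rescaling contributes the drift $-\overline{\nabla}_{\overline{\nabla}f}h$ and the soliton--identity cancellations, and that linearising about $\overline{g}$ produces precisely $\Delta_{\overline{g},f}+2\overline{Rm}$ (for which the $U(2)$-symmetry is indeed irrelevant, so Theorem \ref{U(2)Spectrum} simply replaces Theorem \ref{Spectrum} downstream).

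The one place I would push back is your last paragraph. You assert that, after pullback and rescaling, the inhomogeneity $-2\eta_{\Gamma_0}\,\mathrm{Ric}(G_{\mathbf{p}})-2(1-\eta_{\Gamma_0})\,\mathrm{Ric}(\phi_t^*\overline{g})+2\,\mathrm{Ric}(\acute{G}_{\mathbf{p}})$ coming from the transition region of $\eta_{\Gamma_0}$ is ``itself of size $\mathcal{O}(|h|^2)$.'' That is not quite right as a pointwise statement: writing $G_{\mathbf p}=(1-t)\phi_t^*\overline g+\tilde h$, this defect contains terms like $\nabla\eta_{\Gamma_0}*\nabla\tilde h$ and $\nabla^2\eta_{\Gamma_0}*\tilde h$, which are \emph{linear} in $\tilde h$, not quadratic. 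In \cite{Stol1} this contribution is not absorbed into a genuine quadratic remainder; it is written out as a separate forcing term supported in $\overline{\{0<\eta_{\Gamma_0}<1\}}\subset\{f>\Gamma_0/2\}$ and estimated there using the curvature bounds (Proposition \ref{pseudo5} here, Proposition 5.2 in \cite{Stol1}) together with the smallness of $h$ in that region from the $C^2$ bounds built into the box. The statement of Corollary \ref{equationforh} as reproduced in this paper deliberately lumps this into the catch-all ``$\mathcal{O}(|h|^2)$ denotes the non-linear terms,'' but if you unwind the derivation honestly you should present that cutoff defect as its own (linear-in-$h$, compactly supported, separately estimated) error term rather than claim it is quadratic. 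Otherwise your outline matches the approach the paper implicitly relies on.
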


	\subsubsection{The box}\label{box}
	Theorem \ref{U(2)Spectrum} allows the following decomposition into $U(2)$-symmetric 2-tensors. Recall that we chose $\lambda_{*}\in (-\infty,0)\setminus \{\lambda_{j}\}$ and $K\in \mathbb{N}$ such that $\lambda_{K}>\lambda_{*}>\lambda_{K+1}$.
	\begin{definition} 
		For general $h \in L^2_f(\mathbb{CP}^2\setminus \{\mathrm{pt}\})^{U(2)}$, define the unstable and stable projections by
		\begin{align*} 
			&h_u \coloneqq  \sum_{j=1}^K ( h, h_j)_{L^2_f} h_j,	\text{ and }\\
			&h_s \coloneqq  \sum_{j=K+1}^\infty ( h, h_j)_{L^2_f} h_j = h - h_u.	\\
		\end{align*}
	\end{definition}

	\begin{definition} \label{boxdef}
		For constants $ \mu_u, \mu_s, \epsilon_0, \epsilon_1, \epsilon_2 \in (0,1)$, and an interval $I \subset [0,1)$, 
		define the box
		$$\mathcal{B} = \mathcal{B}[\lambda_* ,\mu_u,\mu_s, \epsilon_0,\epsilon_1, \epsilon_2, I]$$
		to be the set of smooth $U(2)$-symmetric sections $h$ of $ Sym^2 T^*(\mathbb{CP}^2\setminus \{\mathrm{pt}\}) \times I$ 
		such that:
		\begin{enumerate}[i)]
			\item $h(\cdot, t)$ is compactly supported in $\mathbb{CP}^2\setminus \{\mathrm{pt}\}$ for all $t \in I$,
			
			\item ($L^2_f$ estimates)
			\begin{align*} 
				\left \lVert h_u( \cdot, t) \right\rVert_{L^2_f} &\leq \mu_u (1-t_0)^{-\lambda^*}		
				\text{ for all } t \in I, \\
				\left \lVert h_s( \cdot, t) \right\rVert_{L^2_f} &\leq \mu_s (1-t_0)^{-\lambda^*}		
				\text{ for all } t \in I, 
			\end{align*} 
			and
			\item ($C^2$ estimates)
			\begin{align*}
				&\lvert h \rvert_{\overline{g}} 
				\leq \epsilon_0	
				\text{ for all } t \in I, \\
				&\lvert \overline{\nabla} h \rvert_{\overline{g}} \leq \epsilon_1
				\text{ for all } t\in I, \text{ and} \\
				&\lvert \overline{\nabla}^2 h \rvert_{\overline{g}} \leq \epsilon_2
				\text{ for all } t \in I. 
			\end{align*}
		\end{enumerate}
	\end{definition}

	\begin{remark}
		The linearisation of equation (\ref{equationforh2}) for $h$ is
		$$(1-t)\partial_t h=\Delta_f h+2Rm(h).$$
		Thus, if we consider Ricci flow with initial condition
		$$\acute G_{\mathbf p}(t_0) 
		= (1 - t_0) \phi_{t_0}^* g
		+  ( 1 - t_0)  \phi_{t_0}^* \left( \eta_{\gamma_0}    \sum_{j = 1}^{K } p_j  h_{j}  \right),$$
		meaning $h= \eta_{\gamma_{0}} \sum_{i=1}^{K} p_j h_j$, then we expect $h$ to evolve as
		\begin{equation}\label{approxh}
			h(\tau)\approx \sum_{i=1}^{K}p_j (1-t)^{-\lambda_{j}}h_j,
		\end{equation}
		in the region where $\eta_{\gamma_{0}}=1$.
		By Remark \ref{>0}, there is some $1\leq j<K$ with $\lambda_{j}>0$. So we do not expect $h(t)\not\rightarrow 0$ as $t \rightarrow 1$. The Wa{\.z}ewski box argument will show however that there is some $\mathbf{p}$ such that $h(t)\rightarrow 0$ as $t \rightarrow 1$. 
		With the approximation to the linearised flow (\ref{approxh}) in mind, an inequality of the form 
		$$\left \| h_s( \cdot, t) \right\|_{L^2_f} \le \mu_s (1-t)^{-\lambda_*}	\qquad	
		\text{ for all } t\in I,$$
		is expected. 		
		However, an inequality of the form 
		$$\left \| h_u( \cdot, t) \right\|_{L^2_f} \le \mu_u (1-t)^{-\lambda_*}	\qquad	
		\text{ for all } t \in I,$$
		is not expected. Therefore we expect $h$ to leave the box  
		$$\mathcal{B}=\mathcal{B}[ \lambda_* , \mu_u, \mu_s, \epsilon_0, \epsilon_1, \epsilon_2 , [t_0, t_1) ]$$
		in general. The Wa{\.z}ewski box argument will show that all the $h_{\mathbf{p}}(t)$ cannot leave $\mathcal{B}$, provided $\gamma_{0}, \overline{p}, \epsilon_{0}, \epsilon_{1}, \epsilon_{2}, \mu_u, \mu_s, 1-t_0$ are chosen suitably large or small. We will see that this implies there exists a $\mathbf{p^*}$ with $|\mathbf{p}^*| \le \overline{p} (1 - t_0)^{| \lambda_*|}$ such that $h_{\mathbf{p}^*} \in \mathcal{B}[ \lambda_* , \mu_u, \mu_s, \epsilon_0, \epsilon_1, \epsilon_2 , [t_0, 1 )]$.
		Briefly, the reason for this is because if 
		all the $h_\mathbf{p}(t)$ did exit the box, then for $0 < \mu_u , \mu_s\le  2$, we can define 
		$$t_{\mu_u, \mu_s}^* ( \mathbf p ),$$
		to be the infimum of the times $t$ such that $h_\mathbf{p}(t)$  is not in the box. Then for the closed ball $\overline{B}_{\overline{p}} e^{\lambda_* \tau_0}  \subset \mathbb{R}^K$ of radius $\overline{p} e^{\lambda_* \tau_0}$ centred at the origin,
		define the map
		\begin{gather*}
			\mathcal{F} : \overline{B}_{\overline{p} e^{\lambda_* \tau_0} } \to \mathbb{R}^K,\\
			\mathcal{F}(\mathbf p ) 
			= 
			\left( \big( h_{\mathbf p }(t_{\mu_u, \mu_s}^*(\mathbf p) )  , h_1 \big)_{L^2_f(M)}, 
			\dots ,
			\big( h_{\mathbf p }(t_{\mu_u, \mu_s}^*( \mathbf p) )  , h_{K} \big)_{L^2_f(M)} \right).
		\end{gather*}
		
		It will be the case that if we choose  $\overline{p}, \gamma_0,  \epsilon_0, \epsilon_1, \epsilon_2, \tau_0, \mu_u, \mu_s$ to be suitably large or small, the map
		$$\mathcal{F} : \overline{B_{\overline{p} e^{\lambda_* \tau_0} } } \to \mathbb{R}^K \setminus \{ 0 \}$$
		will be a continuous function whose restriction to $$\overline{A} = \{ \mathbf p \in \mathbb{R}^K : 2 \mu_u e^{\lambda_* \tau_0} \le \lvert \mathbf p \rvert \le \overline{p} e^{\lambda_* \tau_0} \}$$
		is homotopic to the identity $\text{Id} : \overline{A} \to \overline{A} \subset \mathbb{R}^K \setminus \{ 0 \}$.
		No such a map exists, yielding a contradiction.
		The purpose of the $C^2$ estimates is to ensure that $h$ does not exit $\mathcal{B}$ due to the failure of the harmonic map heat flow to exist, and to the control the non-linear terms in the equation for $h$.
	\end{remark}

	\subsection{Pseudolocality results}
	As mentioned earlier, the proof of the existence of a Ricci flow in $G_{AF, \mathbb{CP}^2-\{\text{pt}\}}$ which encounters a finite time singularity modelled on FIK differs from the proof of \cite[Theorem 1.1]{Stol1} (with FIK as the asymptotically conical shrinker) only in that we now desire a Ricci flow in $G_{AF, \mathbb{CP}^2-\{\text{pt}\}}$, and so use a $U(2)$-symmetric basis of eigentensors, and are glueing FIK into a non-compact manifold. The only place where compactness is used in \cite{Stol1} is to prove the curvature bound \cite[Proposition 5.2]{Stol1}, which uses a pseudolocality result for Ricci flow on compact manifolds. In this subsection we record a pseudolocality result for Ricci flow on non-compact manifolds with curvature decaying at infinity and then prove the analogue of the curvature bound \cite[Proposition 5.2]{Stol1}. Once this is done, the part of the proof of Theorem \ref{fullthm2} which uses the box argument follows without any additional work.

	We denote by $\omega$ the volume of ball $B^4$ with unit radius in $\mathbb{R}^4$.
	\begin{theorem}[Peng \cite{Peng}]  \label{pseudolocalitynoncompact}
		There exist $\epsilon, \delta > 0$  with the following property:
		If $(M^4, g(t) )$ is a smooth complete Ricci flow on a manifold $M$ with bounded curvature on each time slice and defined for $t \in [t_0, T)$ that, at initial time $t_0$, satisfies
		$$\lvert Rm\rvert_{g(t_0)}(x) \leq \frac{1}{r_0^2} \qquad \text{ for all  } x \in B_{g(t_0)}(x_0, r_0)$$
		and
		$$\mathrm{Vol}_{g(t_0) } B_{g(t_0)}(x_0, r_0) \geq (1-\delta) \omega r_0^4 \qquad \text{ for all } x \in B_{g(t_0)} (x_0,r_0)$$
		for some $x_0\in M$ and $r_0>0$,
		then
		$$\lvert Rm\rvert_{g(t)}(x,t) \leq \frac{1}{\epsilon^2 r_0^2} \qquad \text{for all } x \in B_{g(t)} (x_0, \epsilon r_0), t \in [t_0, \min \{ T , t_0 +  (\epsilon r_0)^2\} ).$$
	\end{theorem}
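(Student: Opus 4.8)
The plan is to transcribe Perelman's proof of pseudolocality \cite{Per1} --- given there for closed manifolds --- to the complete bounded-curvature setting, checking that the entropy and $\mathcal{L}$-geometry monotonicity formulas survive. I would argue by contradiction and by scaling. Normalise $t_0 = 0$, $r_0 = 1$, and suppose no $\epsilon,\delta$ work: then there are sequences $\epsilon_k,\delta_k \downarrow 0$, complete Ricci flows $(M_k^4, g_k(t))$ with bounded curvature on each time slice, base points $x_k$, with $|Rm_{g_k(0)}| \le 1$ on $B_{g_k(0)}(x_k,1)$ and $\mathrm{Vol}_{g_k(0)} B_{g_k(0)}(x_k,1) \ge (1-\delta_k)\omega$, yet a point $(y_k,s_k)$ with $s_k \le \epsilon_k^2$, $y_k \in B_{g_k(s_k)}(x_k,\epsilon_k)$, and $|Rm_{g_k(s_k)}|(y_k) > \epsilon_k^{-2}$.

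Next I would apply Perelman's point-selection lemma to upgrade $(y_k,s_k)$ to a point $(\bar y_k,\bar t_k)$ with $Q_k := |Rm_{g_k}|(\bar y_k,\bar t_k) \to \infty$ and with $|Rm_{g_k}| \le 4Q_k$ on a backward parabolic neighbourhood of $(\bar y_k,\bar t_k)$ of parabolic radius $\sim A_k Q_k^{-1/2}$ with $A_k\to\infty$. This is the first place where non-compactness enters: the suprema in the point-selection procedure must be attained, or nearly so, which is exactly what the bounded-curvature-on-each-slice hypothesis provides; the argument here is the standard one carried out in the complete bounded-curvature literature (e.g.\ Chau--Tam--Yu and the standard expositions of Perelman's work).

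The core is then a localisation of Perelman's $\mathcal{W}$-entropy. Choosing a cutoff $\varphi_k$ supported in $B_{g_k(0)}(x_k,1)$ and equal to $1$ on a definite smaller ball, the almost-Euclidean volume bound forces the $\varphi_k$-localised $\mathcal{W}$-entropy at $t=0$ to exceed $-\psi(\delta_k)$ with $\psi(\delta)\to 0$, via the sharp logarithmic Sobolev inequality on $\mathbb{R}^n$ whose equality case is the Gaussian, i.e.\ flat space. Perelman's monotonicity --- valid here because the entropy and reduced-volume monotonicity formulas hold for complete flows with bounded curvature on each slice, the cutoff error terms being absorbed using those curvature bounds --- then propagates the near-maximal entropy forward in time. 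Parabolically rescaling by $Q_k$ about $(\bar y_k,\bar t_k)$ and passing to a limit (the local curvature bounds from point-selection together with the non-collapsing supplied by the volume hypothesis give a smooth complete pointed limit flow) produces a non-flat gradient shrinking soliton of vanishing Perelman entropy; but entropy $0$ characterises flat $\mathbb{R}^n$, contradicting $|Rm| = 1$ at the limit base point.

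The main obstacle is the localisation step: one must show that the cutoff-localised $\mathcal{W}$-functional is still almost-monotone along the flow, with all boundary and error terms controlled by quantities that vanish in the rescaled limit. This is where completeness with bounded curvature on each time slice is genuinely used --- it supplies the integrability of the conjugate heat kernel and the decay needed to justify the integrations by parts and cutoff estimates that are automatic in the closed case. Everything else is a faithful transcription of \cite{Per1}, and the statement as formulated above is precisely that of \cite{Peng}.
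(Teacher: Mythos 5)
The paper does not prove this theorem; it is quoted verbatim from Peng \cite{Peng} and then used as a black box in Proposition~\ref{pseudo5}, so there is no internal proof to compare against. Your outline is a plausible reconstruction of the standard Perelman-type pseudolocality argument transported to the complete bounded-curvature setting, and you correctly flag the localised $\mathcal{W}$-entropy monotonicity as the place where completeness with bounded curvature per slice is actually needed (integrability of the conjugate heat kernel, justification of cutoffs and integrations by parts). Two small imprecisions in the sketch: the rescaled limit at the end of the contradiction argument is a complete flat Ricci flow, forced by the equality case of the logarithmic Sobolev inequality, not a priori a gradient shrinking soliton; and the $\kappa$-non-collapsing needed to extract the pointed limit comes from the propagated near-zero entropy, not directly from the $t=t_0$ almost-Euclidean volume hypothesis, since the blow-up point sits at a positive rescaled time. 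Neither affects the overall scheme, but as written this remains an outline of the cited argument rather than a self-contained proof; since the paper itself defers entirely to \cite{Peng}, that is also how the paper treats it.
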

	
	The following result and proof is an adaptation of \cite[Proposition 5.2]{Stol1}. Recall that $T(\mathbf{p} ) = T(\mathbf{p}; \Gamma_0, \gamma_0, t_0)$ denotes the minimum of $1$ and the maximal existence time of $G_{\mathbf{p}}( t ; \Gamma_0, \gamma_0 , t_0)$
	
	\begin{proposition}\label{pseudo5}
		Let $c>0$. There exists $\mathcal{K}_0=\mathcal{K}_0$ such that if $0<1-t_0 \ll 1$ is small enough depending on $\Gamma_{0}$, then for all $\lvert \mathbf{p} \rvert \leq \overline{p}e^{\lambda_{*} \tau_{0}}$,
		$$\lvert Rm(G_{\mathbf{p}}(t))\rvert_{G_{\mathbf{p}}(t)} (x,t) \leq \mathcal{K}_0,$$
		for all $(x,t)\in \{f\geq \frac{1}{2}\Gamma_{0}+c\} \times [t_0, T(\mathbf{p})].$
	\end{proposition}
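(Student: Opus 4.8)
The plan is to adapt the proof of \cite[Proposition 5.2]{Stol1} to the non-compact setting, substituting the non-compact pseudolocality theorem (Theorem \ref{pseudolocalitynoncompact}) for the compact one used there. Fix $c>0$ and set $\Omega\coloneqq\{f\geq \tfrac12\Gamma_0+c\}$ and $\Omega_0\coloneqq\{f>\tfrac12\Gamma_0+\tfrac{c}{4}\}$. The whole point will be to verify, uniformly in $\mathbf{p}$ with $\lvert\mathbf p\rvert\leq\overline p\,e^{\lambda_*\tau_0}$, the hypotheses of Theorem \ref{pseudolocalitynoncompact} for the initial metric $G_{\mathbf p}(t_0)$ at a scale $r_0$ depending only on $\Gamma_0$ and $c$, and then to observe that, since $T(\mathbf p)\leq 1$, the resulting curvature bound propagates automatically over the whole (short) time interval $[t_0,T(\mathbf p)]$ once $1-t_0$ is small.

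First I would record the geometry of the initial data on $\Omega_0$. By Proposition \ref{G_0}(4) there is $C_0=C_0(\Gamma_0)$ with $\lvert Rm\rvert_{G_{\mathbf 0}(t_0)}\leq C_0$ on $\{f>\tfrac12\Gamma_0\}\supset\Omega_0$ once $0<1-t_0\ll1$ is small depending on $\Gamma_0$; by Proposition \ref{G_0}(5), $\mathrm{Vol}_{G_{\mathbf 0}(t_0)}B(x,1)\geq v_0(\Gamma_0)>0$ there; and since, by Proposition \ref{G_0}(2)--(3), $G_{\mathbf 0}(t_0)$ converges smoothly as $t_0\nearrow1$ to a fixed limit metric which on $\Omega_0$ is the cone $(\Psi^{-1})^*g_{C}$ at distance $\gtrsim\Gamma_0^{1/2}$ from its vertex glued to the ALF metric $g_{\mathrm{Bolt}}$, that limit has $\lvert Rm\rvert\leq C_0(\Gamma_0)$ and $\mathrm{inj}\geq i_0(\Gamma_0)>0$ on $\Omega_0$; hence so does $G_{\mathbf 0}(t_0)$ once $1-t_0$ is small. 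By Proposition \ref{G_p-G-0} with $m=2$, $\lVert G_{\mathbf p}(t_0)-G_{\mathbf 0}(t_0)\rVert_{C^2(G_{\mathbf 0}(t_0))}\lesssim_{\lambda_*}\overline p\,\gamma_0^{\lvert\lambda_*\rvert}\leq\overline p$ uniformly for $\lvert\mathbf p\rvert\leq\overline p\,e^{\lambda_*\tau_0}$, so $G_{\mathbf p}(t_0)$ satisfies the same bounds on $\Omega_0$ (with $C_0,i_0,v_0$ replaced by $2C_0,\tfrac12 i_0,\tfrac12 v_0$), uniformly in $\mathbf p$.

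Next I would fix the pseudolocality scale $r_0$. From the soliton identity $\overline R+\lvert\overline\nabla f\rvert^2=f+\mathrm{const}$ and $\overline R\geq0$ we get $\lvert\overline\nabla f\rvert\lesssim\Gamma_0^{1/2}$ on $\{f\leq\Gamma_0\}$, where $G_{\mathbf p}(t_0)=(1-t_0)\phi_{t_0}^*\overline g$; hence there is $r_0=r_0(\Gamma_0,c)\in(0,1]$ such that every ball $B_{G_{\mathbf p}(t_0)}(x_0,2r_0)$ with $x_0\in\Omega$ lies inside $\Omega_0$. Shrinking $r_0$ further (still depending only on $\Gamma_0,c$ and on the constant $\delta$ of Theorem \ref{pseudolocalitynoncompact}), the bounded-geometry bounds on $\Omega_0$ give both $\lvert Rm\rvert_{G_{\mathbf p}(t_0)}\leq r_0^{-2}$ on $B(x_0,r_0)$ and $\mathrm{Vol}_{G_{\mathbf p}(t_0)}B(x,r_0)\geq(1-\delta)\omega r_0^4$ for all $x\in B(x_0,r_0)$: for the latter, at scale $r_0\ll\min(i_0,C_0^{-1/2})$ the rescaled metric $r_0^{-2}G_{\mathbf p}(t_0)$ is, in harmonic coordinates, $C^0$-close to the flat metric on the unit ball with closeness tending to $0$ as $r_0\to0$, so the volume ratio tends to $1$. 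This is the step I expect to be the main obstacle, since it requires the almost-Euclidean volume lower bound \emph{uniformly} over the non-compact region $\Omega$; it is exactly here that the curvature-decay condition defining $G_{AF,\mathbb{CP}^2\setminus\{\mathrm{pt}\}}$ and the explicit model (cone glued to Taub-Bolt) for $G_{\mathbf 0}(t_0)$ far from the bolt are used.

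Finally, I would apply Theorem \ref{pseudolocalitynoncompact} to the complete bounded-curvature Ricci flow $(G_{\mathbf p}(t))_{t\in[t_0,T(\mathbf p))}$ at each $x_0\in\Omega$ with this $r_0$: it yields $\lvert Rm\rvert_{G_{\mathbf p}(t)}(x_0)\leq(\epsilon r_0)^{-2}$ for all $t\in[t_0,\min\{T(\mathbf p),t_0+(\epsilon r_0)^2\})$. Since $r_0$ and $\epsilon$ depend only on $\Gamma_0,c$ and universal constants whereas $T(\mathbf p)\leq 1$, choosing $1-t_0<(\epsilon r_0)^2$ forces $T(\mathbf p)-t_0<(\epsilon r_0)^2$, so the bound holds for all $t\in[t_0,T(\mathbf p))$, and on the closed interval $[t_0,T(\mathbf p)]$ by continuity of $t\mapsto\lvert Rm\rvert_{G_{\mathbf p}(t)}(x_0)$, which is then bounded up to $t=T(\mathbf p)$. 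As $x_0\in\Omega$ was arbitrary, this proves the proposition with $\mathcal K_0\coloneqq(\epsilon r_0)^{-2}$.
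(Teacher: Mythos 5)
Your proof is correct and follows the same overall strategy as the paper: fix the scale $r_0$ depending only on $\Gamma_0$ (and $c$), verify the curvature bound and the almost-Euclidean volume lower bound for $G_{\mathbf p}(t_0)$ on $\{f>\tfrac12\Gamma_0\}$, and then feed these into Peng's non-compact pseudolocality (Theorem \ref{pseudolocalitynoncompact}), using $T(\mathbf p)-t_0\leq 1-t_0$ to cover the whole time interval. You differ from the paper in two places, and both differences are legitimate. First, you control the $\mathbf p$-dependence of $G_{\mathbf p}(t_0)$ via the $C^2$-closeness estimate of Proposition \ref{G_p-G-0}, whereas the paper instead invokes Remark \ref{remark1}\,(i) to note that $G_{\mathbf p}(t_0)=G_{\mathbf 0}(t_0)$ \emph{identically} on $\{f>\Gamma_0/2\}$, since the perturbation $(1-t_0)\phi_{t_0}^*(\eta_{\gamma_0}\sum p_j h_j)$ is compactly supported inside $\{f<\Gamma_0/2\}$; the paper's observation is sharper (it needs no smallness of $\overline p$ for this step), but your version is fine given that $\overline p$ must be small anyway for Corollary \ref{welldefined}. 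Second, and more substantively, for the uniform almost-Euclidean volume lower bound you argue directly: bounded curvature plus a uniform injectivity radius lower bound on $\Omega_0$, coming from the smooth $t_0\nearrow1$ limit to the cone-glued-to-$g_{\mathrm{Bolt}}$ model and the exact $g_{\mathrm{Bolt}}$ structure far out, give $C^0$-closeness to Euclidean at small scales in harmonic coordinates. The paper instead splits $\Omega$ into the far region $\{f>\tilde C\}$, where $G_{\mathbf 0}(t_0)=g_{\mathrm{Bolt}}$ and the bound is immediate, and the compact annulus $\{\tfrac12\Gamma_0+c\leq f\leq\tilde C\}$, where it argues by contradiction: if the volume bound failed along sequences $t_i\to1$, $r_i\to0$, the curvature bound (\ref{Cbound}) and the volume non-collapsing of Proposition \ref{G_0}(5) would force the blow-ups $r_i^{-2}G_{\mathbf p}(t_i)$ to converge in $C^{1,\alpha}$ to Euclidean $\mathbb{R}^4$, contradicting the assumed failure. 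Your direct route buys a more quantitative statement and makes explicit where the injectivity radius control comes from; the paper's contradiction argument avoids invoking harmonic coordinates and injectivity radius estimates explicitly, at the cost of being non-effective in $r_0$. Both are valid; you correctly identified the uniform volume lower bound over the non-compact region as the only genuinely new point relative to \cite[Proposition 5.2]{Stol1}.
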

	
	\begin{proof}
		Let $\epsilon, \delta  > 0$ be as in Theorem \ref{pseudolocalitynoncompact}. By Remark \ref{remark1}, throughout $\{ f > f_{\mathrm{Bolt}}+\Gamma_0 / 2\}$, we have
		$G_{\mathbf{p} } (t_0) = G_{\mathbf{0}} (t_0)$ for all $\mathbf{p}$. Therefore, for some $C = C(\Gamma_0)$, 
		\begin{equation}\label{Cbound}
			\lvert Rm(G_{\mathbf{p}}(t_0)) \rvert_{G_{\mathbf{p}}(t_0)} \leq C
		\end{equation}
		by Definition \ref{G_0} provided $0 < 1 - t_0 \ll 1$ is sufficiently small.
		Since $G_{\mathbf{0}}(t_0)=g_{\mathrm{Bolt}}$ on $\{f>\tilde{C}\}$ for some $\tilde{C}>\frac{\Gamma_0}{2}+c>0$, we can find $0 < r_0 \ll 1$ sufficiently small such that  $\text{Vol} \,B_{ g_{\mathrm{Bolt}}} (x, r_0) \ge ( 1 - \delta) \omega r_0^4
		\text{ for all } x \in \{f>\tilde{C}\}$. 
		Furthermore, the restriction of the metrics $G_{\mathbf{p} } (t_0)$ to $\{ f >\Gamma_0 / 2\}$  stay within an arbitrarily small $C^0$-neighbourhood for all $0 < 1 - t_0 \ll 1$ sufficiently small. We claim that, by making $0 < r_0 \ll 1$ sufficiently small,  the following holds,
		\begin{equation}\label{vol}
			\text{Vol} (B_{ G_{\mathbf{p}}(t_0)}) (x, r_0) \geq ( 1 - \delta) \omega r_0^4,
		\end{equation}
		for all $x \in \{\frac{\Gamma_0}{2}+c\leq f\leq \tilde{C}\}$. As argued in \cite[Proposition 5.2]{Stol1}, if this is not the case then there are sequences $x_i \in \{\frac{\Gamma_0}{2}+c\leq f\leq \tilde{C}\}, t_i \rightarrow 1, r_i \rightarrow 0$ such that 
		\begin{equation*}
			\text{Vol} (B_{ G_{\mathbf{p}}(t_i)}) (x_i, r_i) < ( 1 - \delta) \omega r_i^4.
		\end{equation*}
		The curvature estimate (\ref{Cbound}) and the lower volume bound of Proposition \ref{G_0} implies that the blow-up sequence $r_i^{-2}G_{\mathbf{p}}(t_i)$ based at $x_i$ has a subsequence which converges in the pointed $C^{1, \alpha}$-topology to the Euclidean metric on $\mathbb{R}^4$. This contradicts the choice of sequences $x_i, t_i, r-i$. Thus, (\ref{vol})  holds for sufficiently small $r_0$.
		
		Therefore, there exists $0 < r_0 \ll 1$ sufficiently small 
		depending on $\Gamma_0$
		so that, for all $0 < 1 - t_0 \ll 1$ sufficiently small,
		\begin{gather*}
			C \leq \frac{1}{r_0^2} ,\\
			B_{G_{\mathbf{p}} (t_0)}(x, r_0) \subset \{f> \Gamma_0/2\}
			\qquad \text{for all } x \in \{f\geq \frac{1}{2}\Gamma_{0}+c\}, \text{ and}\\
			\text{Vol} (B_{G_{\mathbf{p}} (t_0)} (x, r_0)\geq ( 1-\delta)\omega r_0^4
			\qquad \text{for all } x \in \{f\geq \frac{1}{2}\Gamma_{0}+c\}.
		\end{gather*}
		If $0<1-t_0\ll 1$ is also small enough so that
		$$T(\mathbf{p})-t_0 \leq 1-t_0 \leq \epsilon^2 r_0^2,$$ then 
		$$\lvert Rm(G_{\mathbf{p}}(t))\rvert_{G_{\mathbf{p}}(t)} (x,t)  \leq \frac{1}{ \epsilon^2 r_0^2 } \qquad \text{for all } (x,t) \in \{f\geq \frac{1}{2}\Gamma_{0}+c\} \times [t_0, T(\mathbf{p})).$$
		Taking $\mathcal{K}_0 = ( \epsilon r_0)^{-2}$ completes the proof.
	\end{proof}

	\section{Proof of Theorem \ref{fullthm2}}\label{final}
	
	We briefly recall the set up of the box argument of \cite{Stol1} when the asymptotically conical shrinker is FIK so as to make the adaptation to our setting as transparent as possible.

	As is the case throughout this paper, $(\mathbb{CP}^2\setminus \{\mathrm{pt}\},\overline{g})$ is the shrinking soliton FIK. Stolarski constructs metrics
	$G_{\mathbf{0}}(t_0)$ on a compact manifold satisfying properties (1)-(4) (apart from the claim in (3) about Taub-Bolt) of Proposition \ref{G_0}. We also note that the dependence of the curvature (4) on $\Gamma_{0}$ is not actually used in the proof of \cite[Theorem 1.1]{Stol1}).  Fix $\lambda_{*}<0$ and find $K$ such that $\lambda_{K}<\lambda_{*}<\lambda_{K+1}$. In the notation of Stolarski, for each $\mathbf{p}$, define the metic
	$$G_{\mathbf{p}}(t_0)= G_{\mathbf 0 }(t_0 ) 	
	+ ( 1 - t_0) \phi_{t_0}^* \left( \eta_{\gamma_0}     \sum_{j = 1}^{K } p_j  h_{j}   \right),$$
	where $\eta_{\gamma_{0}}$ is the bump function introduced earlier, and the $h_j$ are the not necessarily $U(2)$-symmetric basis of Theorem \ref{Spectrum}. 
	Evolve $G_{\mathbf{p}}(t_0)$ by the Ricci flow
	$$\partial_t G_{\mathbf{p}} (t) = -2 Ric [ G_{\mathbf{p}} (t) ].$$
	Then  Stolarski proves that $G_{\mathbf{p}}(t)$ has curvature bounds 
	$$\lvert Rm(G_{\mathbf{p}}(t))\rvert_{G_{\mathbf{p}}(t)} (x,t) \leq \mathcal{K}_0,$$
	for all $(x,t)\in \{ \frac{4}{6}\Gamma_{0}< f< \frac{5}{6}\Gamma_{0}\} \times [t_0, T(\mathbf{p})],$ and where $\mathcal{K}_0$ is a constant.
	Define a new metric on $M$: truncate $G_{\mathbf{p}}(t)$ and then extend to a metric on $M$ by
	
	$$\acute G_{\mathbf{p}}(t) 
	\coloneqq \eta_{\Gamma_0} G_{\mathbf{p}}(t) + (1- \eta_{\Gamma_0}) (1 - t) \phi_t^* \overline{g},$$
	where $\eta_{\Gamma_0}$ is as defined earlier.	
	With $\Phi_{t}$ defined as in Definition \ref{HMHF1}, define $$g(t)=g_{\mathbf{p}}(t; t_0)=\frac{1}{1-t}(\Phi_{t}^{-1})^* G'_{\mathbf{p}}(t),$$
	and
	$$h_{\mathbf{p}} (t;\Gamma_0,\gamma_0,t_0) \coloneqq g(t) - \overline{g}.$$
	The box
	$$\mathcal{B} = \mathcal{B}[\lambda_* ,\mu_u,\mu_s, \epsilon_0,\epsilon_1, \epsilon_2, I],$$
	is then defined exactly as we have done in Definition \ref{boxdef}.
	The proof of \cite[Theorem 1.1]{Stol1} only uses that the objects are defined as above with $G_{\mathbf{p}}(t)$ having the curvature bounds
	$$\lvert Rm(G_{\mathbf{p}}(t))\rvert_{G_{\mathbf{p}}(t)} (x,t) \leq \mathcal{K}_0=\mathcal{K}_0(c),$$
	for all $(x,t)\in \{f\geq \frac{1}{2}\Gamma_{0}+c\} \times [t_0, T(\mathbf{p})],$	
	(i.e.~ the only properties of $G_{\mathbf{p}}(t_0)$ used in the proof that relates to the Riemannain manifold we glued FIK into is the curvature bound), which are proven in our case via Proposition \ref{pseudo5}. The estimates of the eigentensors $h_j$ of \cite{Stol1} clearly still hold for our $U(2)$-symmetric basis $h_j$ of Theorem \ref{U(2)Spectrum}.
	
	As a result, It will be the case that for our parameters, if we choose  $\overline{p}, \gamma_0,  \epsilon_0, \epsilon_1, \epsilon_2, \tau_0, \mu_u, \mu_s$ to be suitably large or small, the map $\mathcal{F}$ defined informally above and defined precisely in of \cite[Page 72]{Stol1} is continuous, maps into 
	$ \mathbb{R}^K \setminus \{0\}$.
	and whose restriction to $$\overline{A} = \{ \mathbf p \in \mathbb{R}^K : 2 \mu_u e^{\lambda_* \tau_0} \le \lvert \mathbf p \rvert \le \overline{p} e^{\lambda_* \tau_0} \}$$
	is homotopic to the identity $\text{Id} : \overline{A} \to \overline{A} \subset \mathbb{R}^K \setminus \{ 0 \}$.
	No such a map exists, yielding a contradiction. 
	As argued in the proof of \cite[Theorem 1.1]{Stol1}, this implies the existence of a $\mathbf{p}$ such that $G_{\mathbf{p}}(t)$ encounters a finite time singularity modelled on FIK follows.

	\begin{proof}[Proof of Theorem \ref{fullthm2}]
		Sticking with the notation of \cite{Stol1}, we prove Theorem \ref{fullthm2} so that we encounter a singularity at time $1$. By translating time we can yield a Ricci flow with a singularity at time $T$. Also, although our initial metric for the pre-surgery flow  in this proof will be equal to a multiple of $g_{\mathrm{Bolt}}$ outside some compact set, by rescaling or parametrising we can clearly obtain a Ricci flow with initial metric equal to $g_{\mathrm{Bolt}}$ outside some compact set.

		By the Wa{\.z}ewski box argument deployed in the proof of \cite[Theorem 1.1]{Stol1}, provided $\gamma_{0}, \overline{p}, \epsilon_{0}, \epsilon_{1}, \epsilon_{2}, \mu_u, \mu_s, 1-t_0$ are chosen suitably large or small, there exists a $\mathbf{p}^*$ such that
		\begin{enumerate}[i)]
			\item 	$\lvert \mathbf{p^*} \rvert \le \overline{p} (1-t_0)^{-\lambda^*}$,
			\item 	$T( \mathbf{p^*} ) \in[ t_1, 1]$, and
			\item There exists a smooth function $\tilde{\Phi} : \mathbb{CP}^2\setminus \{\mathrm{pt}\} \times [t_0, 1) \to \mathbb{CP}^2\setminus \{\mathrm{pt}\}$ 
			solving (\ref{HMHF}) 
			\begin{equation*}
				\partial_t \tilde{\Phi} = \Delta_{\acute G_{\mathbf{p} }(t), ( 1- t) \phi_t^* \overline{g} } \tilde{\Phi}
				\text{ on } \mathbb{CP}^2\setminus \{\mathrm{pt}\} \times (t_0, 1)
				\qquad \text{with initial condition }
				\tilde{\Phi}( \cdot, t_0) = Id_{\mathbb{CP}^2\setminus \{\mathrm{pt}\}}
			\end{equation*}
			where
			$\tilde{\Phi}_t = \tilde{\Phi}( \cdot, t) : \mathbb{CP}^2\setminus \{\mathrm{pt}\} \to \mathbb{CP}^2\setminus \{\mathrm{pt}\}$ is a diffeomorphism for all $t \in [t_0,1)$ and,
			$$h_{\mathbf{p^*} }(t) \doteqdot \frac{1}{1-t} ( (\phi_{t}\circ \tilde{\Phi}_t)^{-1} )^* \acute G_{\mathbf{p^*}}(t) - \overline{g} 
			\in \mathcal{B}[ \lambda_* ,  \mu_u, \mu_s, \epsilon_0, \epsilon_1, \epsilon_2 , [t_0, 1) ].$$
		\end{enumerate}
		
		Moreover, Stolarksi shows that this implies the Ricci flow $G_{\mathbf{p}^*}(t, t_0, \Gamma_{0})$ encounters a local finite time singularity modelled on FIK at $t=1$.
		The harmonic map heat flow $\tilde{\Phi}$ is $U(2)$-symmetric and so $\Phi$ is $U(2)$-symmetric. Since,
		$$G'_{\mathbf{p^*}}(t)=(\tilde{\Phi}_t)^*((1-t)\phi_t^*(\overline{g}+h_{\mathbf{p^*}}(t))),$$
		up to a $U(2)$-symmetric time-dependent diffeomorphism  $G'_{\mathbf{p^*}}(t)$ is 
		$(1-t)\phi_t^*(\overline{g}+h_{\mathbf{p^*}}(t))$. Since the size of the bolt of $(1-t)\phi_t^*(\overline{g}+h_{\mathbf{p^*}}(t))$ goes to zero as $t \rightarrow 1$ and $\tilde{\Phi}_t$ fixes the bolt, the size of the bolt of $G'_{\mathbf{p^*}}(t)$ goes to zero as $t\rightarrow 1$. By Lemma \ref{remark1}, on $\{ x \in \mathbb{CP}^2\setminus \{\mathrm{pt}\} : \eta_{\Gamma_0}(x)  =  1 \}$,
		$\acute G_{\mathbf{p}^*}(t) = G_{\mathbf{p}^*}(t)$,  the size of the bolt of $G_{\mathbf{p}^*}(t)$ goes to zero as $t\rightarrow 0$. In particular, the curvature of the bolt blows up at time $t=1$.

		By Proposition \ref{diagonalprop}, with the correct radial vector field, $G_{\mathbf{p}^*}(t, t_0, \Gamma_{0}) \in G_{AF, \mathbb{CP}^2-\{\text{pt}\}}$ with some mass $m$ (which we can choose to any positive number just by rescaling).  Then we have Ricci flow in $G_{AF, \mathbb{CP}^2-\{\text{pt}\}}$ with the bolt shrinking to zero size in finite time. By Theorem \ref{cto0} the curvature must blow up on $\{s\leq R\}$ or $\{s< R\}$ for some $R\geq 0$ and so we can perform surgery and continue the flow.
		Indeed, assume $(\mathbb{CP}^2-\{\text{pt}\}, G_{\mathbf{p}^*}(t))_{0\leq t <T}$ is a Ricci flow in $G_{AF, \mathbb{CP}^2-\{\text{pt}\}}$ encountering a finite time singularity at time $1$. Stop the flow at a time $T'$ close to $1$ to get a metric:
		\begin{equation*}
			G_{\mathbf{p}^*}(T')=ds^2+ b^{2}(s,T')(\sigma_{1}^2+\sigma_{2}^2)+c^{2}(s,T')\sigma_{3}^2, \qquad s>0.
		\end{equation*}
		for some smooth functions $b,c:(0,\infty) \rightarrow \infty$ with $\lim_{s\rightarrow 0}b(s)>0, \lim_{s\rightarrow 0}b_{s}(s)=0$ and $c$ being extendible to a smooth odd function on $\mathbb{R}$.
		Cut out all points with $s< \bar{R}$ for any $\bar{R}>R$ to leave a metric:
		\begin{equation*} \label{cutmetric}
			G=ds^2+ b^{2}(s,T')(\sigma_{1}^2+\sigma_{2}^2)+c^{2}(s,T')\sigma_{3}^2, \qquad s\geq \bar{R}.
		\end{equation*}
		Surgery would involve finding a metric:
		\begin{equation*}
			\tilde{G}=ds^2+\tilde{b}^{2}(s)(\sigma_{1}^2+\sigma_{2}^2)+\tilde{c}^{2}(s)\sigma_{3}^2, \qquad s>0,
		\end{equation*}
		on $\mathbb{R}^4$, meaning $\tilde{b}, \tilde{c}$ are able to be extended to smooth odd functions on $\mathbb{R}$, with $\tilde{b}(s)=b(s), \tilde{c}(s)=c(s)$ for all $s\geq \bar{R}$. 
		Given smooth functions $b,c:[\bar{R},\infty)\rightarrow \infty$ with $b_{s},c_{s}\geq 0$ and $c\leq b$, it is easy to obtain smooth functions $\tilde{b},\tilde{c}:(0,\infty)\rightarrow \infty$ with $\tilde{b}_{s}, \tilde{c}_{s}\geq 0$ and $\tilde{c}\leq \tilde{b}$ such that $\tilde{b}, \tilde{c}$ are extendible to smooth odd functions on $\mathbb{R}$, with $\tilde{b}(s)=b(s), \tilde{c}(s)=c(s)$ for all $s\geq \bar{R}$. Notice that the curvature decay condition in Definition \ref{GAF} of $G_{AF, \mathbb{R}^4}$ automatically holds for the surgery described in this subsection as it is a condition about $b,c$ near infinity, which already holds for $G_{\mathbf{p}^*}(T')\in G_{AF, \mathbb{CP}^2\setminus\{\text{pt}\}}$. 
		Once we have such a $\tilde{G} \in G_{AF, \mathbb{R}^4}$, Theorem \ref{Fra1} tells us that Ricci flow starting at $\tilde{G}$ will converge to a member of the Taub-NUT family of mass m in infinite time.
	\end{proof}

	\bibliography{refs}
	\bibliographystyle{amsplain}

\end{document}